\author{Walter Carnielli and  Marcelo E. Coniglio \\
\\ Centre for Logic,
Epistemology and the History of Science- CLE \\  and Department of Philosophy\\  University of
Campinas -Unicamp\\
walterac@unicamp.br; coniglio@unicamp.br}
\title{Twist-Valued Models for Three-valued Paraconsistent Set Theory}
\date{}
\theoremstyle{theorem}
\newtheorem{teo}{Theorem}[section]
\newtheorem{lema}[teo]{Lemma}
\newtheorem{prop}[teo]{Proposition}
\newtheorem{coro}[teo]{Corollary}
\theoremstyle{definition}
\newtheorem{defi}[teo]{Definition}
\newtheorem{obs}[teo]{Remark}
\newtheorem{Not}[teo]{\textbf{Notation}}
\newcommand{\mbc}{\textbf{mbC}}
\newcommand{\mci}{\textbf{mCi}}
\newcommand{\qmbc}{\textbf{QmbC}}
\newcommand{\lfis}{{\bf LFI}s}
\newcommand{\lfi}{{\bf LFI}}
\newcommand{\cplp}{\text{\bf CPL$^+$}}
\newcommand{\cpl}{\text{\bf CPL}}
\newcommand{\mptz}{{\bf MPT0}}
\newcommand{\lptz}{{\bf LPT0}}
\newcommand{\qlptz}{{\bf QLPT0}}
\newcommand{\aptz}{\ensuremath{\mA_{PT0}}}
\newcommand{\lfium}{{\bf LFI1}}
\newcommand{\qlfium}{{\bf QLFI1}}
\newcommand{\dacdot}{{\bf J3}}
\newcommand{\lpt}{{\bf LPT}}
\newcommand{\mpt}{{\bf MPT}}
\newcommand{\ZFmbc}{{\bf ZFmbC}}
\newcommand{\ZFlp}{\ensuremath{{\bf ZF}_{\lptz}}}
\newcommand{\ZF}{{\bf ZF}}
\newcommand{\ZFmci}{{\bf ZFmCi}}
\newcommand{\ZFC}{{\bf ZFC}}
\newcommand{\pst}{\ensuremath{\mathbb{PS}_3}}
\newcommand{\pert}{\ensuremath{\,\epsilon\,}}
\newcommand{\termvalue}[1] {\lbrack\!\lbrack #1 \rbrack\!\rbrack}
\newcommand{\wneg}{\ensuremath{\lnot}}
\newcommand{\sneg}{\ensuremath{{\sim}}}
\newcommand{\imp}{\rightarrow}
\newcommand{\Ra}{\Rightarrow}
\newcommand{\sse}{\leftrightarrow}
\newcommand{\kax}{\textbf{Ax1}}
\newcommand{\axTrans}{\textbf{Ax2}}
\newcommand{\axed}{\textbf{Ax3}}
\newcommand{\axeea}{\textbf{Ax4}}
\newcommand{\axeeb}{\textbf{Ax5}}
\newcommand{\axouda}{\textbf{Ax6}}
\newcommand{\axoudb}{\textbf{Ax7}}
\newcommand{\axoue}{\textbf{Ax8}}
\newcommand{\axouimp}{\textbf{Ax9}}
\newcommand{\axtnd}{\ensuremath{\textbf{TND}_\neg}}
\newcommand{\axtndcl}{\textbf{TND}}
\newcommand{\axexpl}{\textbf{exp}}
\newcommand{\axdneg}{\textbf{dneg}}
\newcommand{\axnegou}{\ensuremath{\textbf{neg}\lor}}
\newcommand{\axnege}{\ensuremath{\textbf{neg}\land}}
\newcommand{\axnegimp}{\ensuremath{\textbf{neg}\imp}}
\newcommand{\MP}{\textbf{MP}}
\newcommand{\cons}{\ensuremath{{\circ}}}
\newcommand{\mA}{\ensuremath{\mathcal{A}}}
\newcommand{\matM}{\ensuremath{\mathcal{M}}}
\newcommand{\tA}{\ensuremath{T_\mathcal{A}}}
\newcommand{\tmA}{\ensuremath{\mathcal{T}_\mathcal{A}}}
\newcommand{\matA}{\ensuremath{\mathcal{MT}_\mathcal{A}}}
\newcommand{\tmpA}{\ensuremath{\mathcal{T}_{\mathcal{A}'}}}
\newcommand{\tmzA}{\ensuremath{\mathcal{T}_{\mathcal{A}^\ast}}}
\newcommand{\sent}{$Sen(\Theta)$}
\newcommand{\tert}{$Ter(\Theta)$}
\newcommand{\fort}{$For(\Theta)$}
\newcommand{\ctert}{$CTer(\Theta)$}
\begin{document}

\maketitle

\begin{abstract}
Boolean-valued models  of  set  theory were introduced by  Scott  and  Solovay in 1965  (and independently by Vop\v{e}nka  in the same year),  offering a natural and rich alternative for  describing forcing. The original method was adapted by Takeuti, Titani, Kozawa and Ozawa to lattice-valued models of set theory. After this,  L\"owe and Tarafder proposed a class of algebras based on  a certain kind of implication which satisfy several axioms of \ZF. From this class, they found a specific three-valued model called \pst\ which satisfies all the axioms of \ZF, and can be expanded with a paraconsistent negation *,  thus  obtaining  a paraconsistent model of \ZF. We observe here that (\pst,*)  coincides (up to language) with da Costa and D'Ottaviano logic \dacdot, a three-valued paraconsistent logic that have been proposed independently in the literature by several authors and with different motivations: for instance, it was  reintroduced as {\bf CLuNs}, \lfium\ and \mpt, among others.

We propose in this paper a family of algebraic models of \ZFC\ based on a paraconsistent three-valued logic called \lptz, another linguistic variant of \dacdot\ and so of (\pst,*) introduced by us in 2016. The semantics of \lptz, as well as of its first-order version \qlptz, is given by twist structures defined over arbitrary complete Boolean agebras. From this, it is possible to adapt the standard Boolean-valued models of  (classical) \ZFC\  to an expansion of \ZFC\ by adding a paraconsistent negation. This paraconsistent set theory is based on \qlptz, hence it is a  paraconsistent expansion of \ZFC\  characterized by a class of twist-valued models.

We argue that the implication operator of \lptz\   considered in this paper  is, in a sense, more suitable for a paraconsistent set theory than the implication of \pst:  indeed, our implication  allows for  genuinely inconsistent sets (in  a  precise sense,  $\termvalue{(w \approx w)}={\bf \frac{1}{2}}$ for some $w$). It is to be remarked  that our implication  does not   fall under the definition of the  so-called `reasonable implication algebras' of L\"{o}we and Tarafder. This suggests   that `reasonable implication algebras'  are just one way to define a paraconsistent set theory,  perhaps not the most appropriate.

Our twist-valued models for \lptz\ can be easily adapted  to provide  twist-valued models for  (\pst,*); in this way twist-valued  models generalize L\"{o}we and Tarafder's three-valued \ZF\ model, showing that all of them (including (\pst,*)) are, in fact, models of \ZFC\ (not only of \ZF). This offers more options for investigating  independence  results in paraconsistent set theory.
\end{abstract} 

 \section{On models of set theory:  G\"odel shrinks, Cohen expands}

The interest for -- and the  overall  knowledge about --  models for set theory changed dramatically after the famous invention (or discovery) of  Paul Cohen's  methods of forcing.  Cohen was able to show that  the notion of cardinal number is elastic and relative, in contrast with the methods  of  ``inner  models''  that   G\"odel  used.   G\"odel has shown that, by  shrinking the totality of sets in a  model, they would turn to be `well-behaved'. As a consequence, the  constructible  sets could  not  be  used  to prove the relative  consistency  of  the negation of the  Axiom of Choice (AC) or of the Continuum Hypothesis (CH). 
Paul J. Cohen, on the contrary,  had the idea of reverting the paradigm, and instead of cutting down the sets within  models, found a way to expand a countable standard  model into a standard model in which CH or AC can be  false, doing this  in a  minimalist but  controlled  fashion.  Cohen elements  are  `bad-behaved', but finely guided so  as to make `logical space' for the independence of  AC and   CH,

As Dana Scott puts  in the  forward of Bell's book~\cite{bell}, 
``Cohen's achievement lies in being able to expand models (countable, standard  models) by adding new sets in a very economical fashion: they more or less have only the properties they are forced to have by the axioms (or by the truths of the given model).''
Cohen's methods, however, are  not easy, being  regarded by some researchers as somewhat lengthy  and tedious -- but were the only tool available  until the  Boolean-valued  models   of  set  theory   put  forward  by  Scott  and  Solovay   (and independently by Vop\v{e}nka) in 1965  offered a     more natural and rich alternative for  describing forcing.   This does not discredit the brilliant  idea of Cohen, who  did not have the machinery of Boolean-valued  models available at his time. 

What is a  Boolean-valued model?  The intuitive idea is to pick a suitable Boolean algebra  \mA, and  define  the set of all \mA-valued sets in M, generalizing the  familiar  $\{0,1\}$   valued  models.  Then add to the language  one constant  symbol for each element of the model.  After this, define a  map $\varphi \mapsto \termvalue{\varphi}^{\mA\ }$  from the sentences in S to  \mA\  which   obey  certain equations so that it should  assign  1 to all the axioms of \ZFC.

The  resulting  structure $M_\mA$  will not   be a standard  model of \ZFC, because it will consist of 
``relaxed sets''  somehow similar to fuzzy sets,  and  not sets properly. If we take  an arbitrary sentence about  sets (for instance, ``does  $Y$  is a  member  of  $X$'' ?) and ask whether it holds in  $M_\mA$, then the answer may  be neither plain ``yes'' nor ``no'',  but some element  of the Boolean algebra  \mA\ meaning the ``degree''  to  which  $Y$ is a member of $X$.  However,    $M_\mA$   will satisfy \ZFC, and to turn   
$M_\mA$  into an actual model of \ZFC\ with certain desired properties  it is sufficient to take a suitable quotient of  $M_B$  that eliminates the elements  of fuzziness. 

Boolean-valued models not only avoid tedious details of Cohen's original construction, but permit a great generalization by varying on any Boolean algebra.

\section{Losing unnecessary weight: the role of alternative set theories}

It is a well-known historical fact  that the discovery of the paradoxes in set theory  and in the foundations of mathematics was the fuse that fired the revolution  in contemporary set theory around its efforts to attempt to rescue Cantor's naive theory from triviality. The usual culprit was  the Principle of (unrestricted) Abstraction, also known as the Principle of Comprehension. Unrestricted abstraction  allows sets to be defined by arbitrary conditions, and this freedom combined with  the axiom of extensionality,  leads to a contradiction, which by its turn leads to triviality in the sense that ``everything goes'', when   the laws of the underlying logic obey the standard principles that comprise the so-called ``classical'' logic.

But there is a way out from this maze. Paraconsistent set theory is the theoretical move to  maintain the freedom of defining sets,   while  stripping the theory of unnecessary principles so as to avoid triviality, a  disastrous consequences of contradictions involving sets in \ZF.

This philosophical maneuver is in frank opposition to  traditional strategies, which deprive the  freedom of set theory so appreciated by Cantor, by  maintaining  the underlying logic and weakening  the Principle of Abstraction,
  
An analogy may be instructive. The basic goal of reverse mathematics is to study  the relative logical strengths of theorems from ordinary non-set theoretic mathematics. To this end, one tries to find the minimal natural axiom system $A$ that  is capable of proving a theorem $T$.
 
In a  perhaps vague, but  illuminating analogy,  paraconsistent logic tries to find the minimal natural principles that are capable of permitting us to reason in generic circumstances, even in the  undesired circumstances of contradictions. 
 
This does  not mean that  contradictions are necessarily real:  \cite{car-rod:2019} gives a  formal system and a corresponding intended interpretation, according to which true contradictions are not tolerated. Contradictions are, instead, epistemically understood as conflicting evidence. There  are indeed  many cases of contradictions in reasoning, but  the classical principle \emph{Ex Contradictione Quodlibet}, or Principle of  Explosion, is  neither used in mathematics in general;  it  is not, therefore, a characteristic of good reasoning, and  has to be abandoned.

Some people may be mislead by thinking that  \emph{Reductio ad Absurdum}, which  is  a useful and robust    rule of inference, would be lost  by abandoning the Principle of  Explosion.
This is not so: even if discarding such a principle,  proofs  by \emph{Reductio ad Absurdum}   get unaffected, as long as one  can define a strong negation. This is achieved in many  paraconsistent  logics, in particular  in all the logics of the family of the  Logics of Formal Inconsistency  (\lfis), see~\cite{CM02,CCM,CC16}. Reasoning does not necessarily  require  the full power  of  \emph{Ex Contradictione Quodlibet}, because   contradictions reached in a \emph{Reductio}  proof are not  really used to cause any deductive explosion;  what is used is the manipulation of negation.

\section{Expanding Cohen's expansion: twist-valued\\ models} \label{Intro-twist}


Boolean-valued models  were adapted by Takeuti, Titani, Kozawa and Ozawa to lattice-valued models of set theory, with applications to quantum set theory and fuzzy set theory (see~\cite{tak:tit:92,tit:99,tit:koz:03,Oza:07,Oza:17}). The guidelines of these constructions were taken by  L\"owe and Tarafder in~\cite{low:tar:15} in order to obtain a  three-valued model (in the form of a  lattice-valued model)  for a paraconsistent set theory based  on \ZF. They propose a class of algebras based on  a certain kind of implication, called {\em reasonable implication algebras} (see Section~\ref{genPS3}) which satisfy several axioms of \ZF. From this class, they found an especific three-valued model which satisfies all the axioms of \ZF, and it can be expanded to an algebra  (\pst, *) with a paraconsistent negation *, obtaining so a paraconsistent model of \ZF.  As we discuss  in Section~\ref{genPS3},   the logic (\pst, *) is the same as the logic \mpt\  introduced in~\cite{ConSil:14}, and coincides up to language with the logic \lptz\ adopted in the present   paper. Here,  we will  introduce the notion of  twist-valued models  for a paraconsistent set theory \ZFlp\    based on \qlptz, a first-order version of \lptz.  Our models,  defined  for any complete Boolean algebra \mA,  constitute a  generalization of the Boolean-valued models for set theory, at the same time generalizing 
L\"owe and Tarafder's three-valued model. Indeed, in Section~\ref{genPS3} the model of \ZF\ based on (\pst, *) will be generalized to twist-valued models over an arbitrary complete Boolean algebra, obtaining so a class of models of \ZFC. The structure over (\pst, *)  will constitute a particular case, by considering the two-element complete Boolean algebra. As a consequence of this, it follows that L\"owe and Tarafder's three-valued structure is, indeed,  a model of \ZFC.

Twist-structure  semantics have been independently proposed by  M. Fidel~\cite{fid:78}  and D. Vakarelov~\cite{vaka:77}, in order to semantically characterize  the well-known Nelson logic. A twist structure  consists of operations  defined on the  cartesian product of the universe  of a lattice,   $L\times L $  so that the negative and positive algebraic characteristics  can be treated separately. In terms of logic, a pair $(a,b)$ in $L \times L$ is such that $a$ represents a truth-value for a formula $\varphi$ while $b$ corresponds to a truth-value for the negation of $\varphi$. That is, $a$ is a positive value for $\varphi$ while $b$ is a negative value for it, thus justifying the name `twist structures' given for this kind of algebras.  This strategy is especially useful for obtaining semantical characterizations for non-standard logics. As a limiting case, a Boolean algebra  turns  out being  a particular case  of  twist structures  when there  is no need to give separate attention to negative and positive algebraic characteristics, since the latter are uniquely obtained from the former by the dualizing Boolean complement $\sneg$. In this case,  every pair $(a,b)$ is of the form $(a, \sneg a)$, hence the second coordinate is redundant. Our proposal is based on models for \ZF\ based on twist structures, thus the sentences of the language of \ZF\ will be interpreted as pairs $(a,b)$ in a suitable twist structure, such that the supremum $a \vee b$ is always $1$, but the infimum $a \wedge b$ is not necessarily equal to $0$. This corresponds to the validity of the third-excluded middle for the non-classical negation of the underlying logic, while the explosion law $\varphi \wedge \neg \varphi \to \psi$ is not valid in general in the underlying paraconsistent logic \lptz. A somewhat related approach was proposed by Libert in~\cite{Li:05}: he proposes models for a naive set theory in which the truth-values are pairs of sets $(A,B)$ of a universe $U$ such that $A \cup B=U$ where $A$ and $B$ represent, respectively, the extension and the anti-extension of a set $a$. However, besides this  similarity, our approach is quite different: we are interesting in giving paraconsistent models for \ZFC\ and not in new models for  Naive set theory.

It is important to notice that there exists in the literature several approaches to paraconsistent set theory, under different perspectives. In particular, we propose in~\cite{CC13} a paraconsistent set theory based on several \lfis, but that approach differs from the one in the present paper. First, in the previous paper the systems were presented axiomatically, by means of suitable   modifications of \ZF. Moreover, in that logics  a consistency predicate $C(x)$ was considering, with the intuitive meaning that `$x$ is a consistent set'. On the other hand, in the present paper a model for standard \ZFC\ will be presented instead of a Hilbert calculus for a modified version of \ZF.  We will return to this point in Section~\ref{sect-ZFparacon}.

As mentioned above, twist structures over a Boolean algebra generalize Boolean  algebras,  and are by their turn generalized  by the \emph{swap structures } introduced in~\cite[Chapter~6]{CC16} (a previous notion of swap structures  was given in~\cite{CC-swap}).  Swap structures are non-deterministic algebras defined over the three-fold Cartesian product $\mA\times \mA\times \mA$ of a given Boolean algebra so that in a triple $(a, b, c)$  the first component $a$  represents the truth-value of a given formula $\varphi$  while $b$  and  $c$ represent,   respectively,  possible values for the paraconsistent negation $\neg\varphi$ of $\varphi$,  and  for the consistency   $\circ\varphi$ of  $\varphi$.

Swap structures are  committed to semantics  with a   non deterministic character, while twist structures  are used when the semantics are deterministic (or truth-functional).  Definition~\ref{defKlfi1}  below  shows  how  the definition of twist structures for the three-valued logic   $\lfium_\circ$ introduced in~\cite[Definition~9.2]{CFG18} can be adapted to \lptz.

As noted in  Section~\ref{twistm}, the three-valued logic (\pst, *)  used in~\cite{tar:15}  already  appears   in~\cite{ConSil:14} under the name \mpt, and it is equivalent to \lptz\ and also to   $\lfium_\circ$. Variants of this  logic have been independently proposed by different authors at  with different motivations in several occasions (for instance, as the well-known da Costa and D'Ottaviano's logic \dacdot). The naturalness of this logic is reflected by the fact that the three-valued algebra of \lptz\ (see Definition~\ref{MPT0-def} below) is equivalent, up to language, to the algebra underlying   \L ukasiewicz  three-valued logic \L3. The only difference is that in the former the set of distinguished (or designated)  truth values is  $\{1, \frac{1}{2} \}$ instead of  $\{1\}$, and this is why \lptz\ is paraconsistent while \L3 is paracomplete. 

Twist-valued models work beautifully as  enjoying  many properties   similar  to  Boolean-valued models (when restricted to pure \ZF-languages).  Such  similarities  lead to a natural proof  that \ZFC\ is valid w.r.t. twist-valued models, as  our central Theorem~\ref{modelZFC}  shows. 
This paper deals with  a paraconsistent set theory  named  \ZFlp, defined by using as the underlying logic  a  first-order version of \lptz, called \qlptz,   proposed in~\cite{CFG19} under the form of $\qlfium_\circ$ (that is, by replacing the strong negation $\sneg$ by the consistency operator~$\circ$).  

The paraconsistent character of twist-valued models as regarding  \ZFlp\ as  rival  of \ZFC\ is emphasized. Despite having some  limitative results, as  much as  L\"owe and Tarafder's  model,   \ZFlp\ has a great potential as  generator of models for  paraconsistent set theory.  A subtle, but critical advantage of our models is that  the implication operator of  \lptz\ is much more suitable for a paraconsistent set theory than the one of \pst. Indeed, our models  allow for  inconsistent sets, and this is of paramount importance, as we argue  below. Moreover, as pointed out above, our  models  generalize the three-valued model   based on \pst, since they can be defined for any complete Boolean algebra. In this way, we have  several  models at our disposal, and in principle this can be used to investigate  independence results in paraconsistency set  theory.

Albeit  Boolean-valued  models  and their  generalization in the form of  twist-valued models are naturally devoted to  study independence results, this paper does not tackle  this big questions yet. The paper, instead, is dedicated to  clarifying such models  while establishing  their basic properties.

\section{The logic \lptz}

In this section the logic \lptz\ will be briefly discussed, including its twist structures semantics. From now on, if $\Sigma'$ is a propositional signature then, given a denumerable set $\mathcal{V}=\{p_1,p_2,\ldots\}$ of propositional variables, the propositional language generated by $\Sigma'$ from $\mathcal{V}$ will be denoted by ${\cal L}_{\Sigma'}$. The paraconsistent logics considered in this paper  belong to the class of  logics known as {\em logics of formal inconsistency}, introduced in~\cite{CM02}  (see also~\cite{CCM,CC16}).

\begin{defi}
Let  ${\bf L}=\langle  \Sigma',\vdash \rangle$ be a Tarskian, finitary and structural logic defined over a propositional signature $\Sigma'$, which contains a
negation $\neg$, and let  $\circ$ be a (primitive or defined) unary connective. The logic
${\bf L}$ is said to be  a {\em logic of formal inconsistency} (\lfi) with respect to $\neg$ and $\circ$ if the following holds:
\begin{itemize}
       \item [(i)] $\varphi,\neg\varphi\nvdash\psi$ for some $\varphi$ and $\psi$;
       \item [(ii)] there are two formulas $\varphi$ and $\psi$ such that
     \begin{itemize}
       \item [(ii.a)] $\circ\alpha,\varphi \nvdash \psi$;
       \item [(ii.b)] $\circ\alpha, \neg \varphi \nvdash \psi$; 
\end{itemize}
       \item [(iii)]  $\circ\varphi,\varphi,\neg\varphi\vdash \psi$ for every $\varphi$ and $\psi$. 
\end{itemize}
\end{defi}

\

Recall the logic \mptz\ presented in~\cite{CC16} as a linguistic variant of the logic \mpt\  introduced in~\cite{ConSil:14}. 

\begin{defi} (Modified Propositional logic of Pragmatic Truth \mptz, \cite[Definition~4.4.51]{CC16}) \label{MPT0-def}
Let $\matM_{PT0} = \langle M, D\rangle$ be the three-valued logical matrix over $\Sigma=\{\land,\lor,\imp, \sneg, \wneg\}$ with domain $M= \{{\bf 1},{\bf \frac{1}{2}}, {\bf 0}\}$ and set of designated values $D=\{{\bf 1},{\bf \frac{1}{2}}\}$ such that the operators are defined as follows:

\ 

\begin{center}
\begin{tabular}{|c||c|c|c|}
\hline
$\land $ & {\bf 1} & ${\bf \frac12}$ & {\bf 0} \\ \hline \hline
{\bf 1} & {\bf 1} & ${\bf \frac12}$ & {\bf 0} \\ \hline
${\bf \frac12}$ & ${\bf \frac{1}{2}}$ & ${\bf \frac12}$ & {\bf 0} \\ \hline
{\bf 0} & {\bf 0} & {\bf 0} & {\bf 0} \\ \hline
\end{tabular}
\hspace{0.5cm}
 \begin{tabular}{|c||c|c|c|}
\hline
$\lor $ & {\bf 1} & ${\bf \frac12}$ & {\bf 0} \\ \hline \hline
{\bf 1} & {\bf 1} & {\bf 1} & {\bf 1} \\ \hline
${\bf \frac12}$ & {\bf 1} & ${\bf \frac12}$ & ${\bf \frac12}$ \\ \hline
{\bf 0} & {\bf 1} & ${\bf \frac12}$ & {\bf 0} \\ \hline
\end{tabular}
\hspace{0.5cm}
\begin{tabular}{|c||c|c|c|}
\hline
$\imp$ & {\bf 1}  & ${\bf \frac12}$  & {\bf 0} \\
 \hline \hline
     {\bf 1}    & {\bf 1}  & ${\bf \frac12}$  & {\bf 0}   \\ \hline
     ${\bf \frac12}$  & {\bf 1}  & ${\bf \frac12}$  & {\bf 0}   \\ \hline
     {\bf 0}    & {\bf 1}  & {\bf 1}  & {\bf 1}   \\ \hline
\end{tabular}
\end{center}

\ \\

\begin{center}
\begin{tabular}{|c||c|} \hline
$\quad$ & $\sneg$ \\
 \hline \hline
     {\bf 1}   & {\bf 0}    \\ \hline
     ${\bf \frac12}$ & {\bf 0}  \\ \hline
     {\bf 0}   & {\bf 1}    \\ \hline
\end{tabular}
\hspace{0.5cm}
\begin{tabular}{|c||c|} \hline
$\quad$ & $\wneg$ \\
 \hline \hline
     {\bf 1}   & {\bf 0}    \\ \hline
     ${\bf \frac12}$ & ${\bf \frac12}$  \\ \hline
     {\bf 0}   & {\bf 1}    \\ \hline
\end{tabular}
\end{center}

\ \\

\noindent The logic  associated to the logical matrix $\matM_{PT0}$ is called \mptz. The three-valued algebra underlying $\matM_{PT0}$ will be called \aptz.
\end{defi}

\noindent Observe that $x \to y = \sneg x \vee y$ for every $x,y$.
Recall that, by definition, the consequence relation $\vDash_{\mptz}$ of \mptz\ is given as follows: for every $\Gamma\cup\{\varphi\} \subseteq {\cal L}_{\Sigma}$, $\Gamma\vDash_{\mptz} \varphi$ iff, for every homomorphism $v:{\cal L}_{\Sigma} \to  M$ of algebras over $\Sigma$, if $v[\Gamma] \subseteq D$ then $v(\varphi) \in D$.

From~\cite{CC16} a sound and complete Hilbert calculus for \mptz, called \lptz, can be defined. This calculus is an axiomatic extension  of a Hilbert calculus for classical propositional logic \cpl\ over the signature $\Sigma_c=\{\land, \lor, \imp,\sneg\}$. From now on, $\varphi \leftrightarrow \psi$ will be an abbreviation for the formula $(\varphi \to \psi) \wedge (\psi \to \varphi)$. 

\begin{defi} (The calculus \lptz, \cite[Definition~4.4.52]{CC16}) \label{LPT0-def} The Hilbert calculus  \lptz\  over $\Sigma$ is defined as  follows:\footnote{To be rigorous, in~\cite[Theorem~4.4.56]{CC16} an additional axiom schema is required:\ $\wneg\sneg\varphi \rightarrow \varphi$. However, it is easy to prove that this axiom is derivable from the others, by using \MP.}\\
\newpage
{\bf Axiom Schemas:}\\

$\begin{array}{ll}
(\kax) & \varphi \to(\psi \to \varphi)\\[2mm]
(\axTrans) & (\varphi \to (\psi \to \gamma)) \to ((\varphi \to \psi) \to (\varphi \to \gamma ))\\[2mm]
(\axed) & \varphi \to(\psi \to (\varphi \wedge \psi))\\[2mm]
(\axeea) & (\varphi \wedge \psi)\to \varphi\\[2mm]
(\axeeb) & (\varphi \wedge \psi)\to \psi\\[2mm]
(\axouda) & \varphi \to( \varphi \vee \psi)\\[2mm]
(\axoudb) & \psi \to( \varphi \vee \psi)\\[2mm]
(\axoue) & (\varphi \to \gamma)\to (( \psi \to \gamma)\to ((\varphi \vee \psi)\to \gamma))\\[2mm]
(\axouimp) & \varphi \vee (\varphi \to \psi)\\[2mm]
(\axtndcl) & \varphi \vee \sneg \varphi\\[2mm]
(\axexpl) & \varphi \imp \big(\sneg \varphi \imp \psi\big)\\[2mm]
(\axtnd) & \varphi \vee \neg \varphi\\[2mm]
(\axdneg) & \wneg\wneg\varphi \leftrightarrow \varphi\\[2mm]
(\axnegou) & \wneg(\varphi \lor \psi) \leftrightarrow (\wneg \varphi \land \wneg\psi)\\[2mm]
(\axnege) & \wneg(\varphi \land \psi) \leftrightarrow (\wneg \varphi \lor \wneg\psi)\\[2mm]
(\axnegimp) & \wneg(\varphi \imp \psi) \leftrightarrow(\varphi \land \wneg\psi)\\[2mm]
\end{array} $

\

{\bf Inference rule:}\\

$\begin{array}{ll}
(\MP) & \dfrac{\varphi ~~~\varphi \to\psi}{\psi}\\
\end{array} $
\end{defi}

\noindent
It is worth noting that  axioms (\kax)-(\axouimp), (\axtndcl) and (\axexpl),   together with~(\MP), constitute an adequate Hilbert calculus for classical propositional logic \cpl\ in the signature $\Sigma_c=\{\land,\lor,\imp, \sneg\}$. Moreover, (\kax)-(\axouimp) plus~(\MP) is an adequate Hilbert calculus for classical positive popositional logic \cplp\ in the signature $\Sigma_{cp}=\{\land,\lor,\imp\}$. 

\begin{teo} (\cite[Theorem~4.4.56]{CC16}) \label{adeqMPT0}
The logic $\lptz$  is sound and complete  w.r.t. the matrix logic of \mptz: $\Gamma\vdash_{\lptz} \varphi$ \ iff \ $\Gamma\vDash_{\mptz} \varphi$, for every $\Gamma\cup\{\varphi\} \subseteq {\cal L}_{\Sigma}$.
\end{teo}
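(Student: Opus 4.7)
Soundness is a routine truth-table verification: each of the eighteen axiom schemas evaluates to a designated value under every homomorphism $v:\mathcal{L}_{\Sigma}\to M$, and \MP\ preserves $D=\{{\bf 1},{\bf \frac{1}{2}}\}$; this amounts to a finite case analysis with no conceptual difficulty.

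For completeness I would use a Lindenbaum-style argument. The axioms $(\kax)$--$(\axouimp)$, $(\axtndcl)$, $(\axexpl)$ together with \MP\ form a complete Hilbert calculus for \cpl\ over $\Sigma_c=\{\land,\lor,\to,\sneg\}$, so all the standard classical deductive machinery is available inside \lptz. Suppose $\Gamma\nvdash_{\lptz}\varphi$. Since \lptz\ is Tarskian and finitary, by Zorn extend $\Gamma$ to a set $\Delta$ maximal with respect to $\Delta\nvdash_{\lptz}\varphi$. The classical axioms, together with the deduction theorem and $(\axoue)$, yield the usual maximality clauses: $\psi\land\chi\in\Delta$ iff both conjuncts lie in $\Delta$; $\psi\lor\chi\in\Delta$ iff at least one disjunct lies in $\Delta$; $\psi\to\chi\in\Delta$ iff $\psi\notin\Delta$ or $\chi\in\Delta$; and $\sneg\psi\in\Delta$ iff $\psi\notin\Delta$ (using $(\axtndcl)$ and $(\axexpl)$). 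Instantiating $(\axtnd)$ at each $\psi$ and applying the $\lor$-clause further yields that at least one of $\psi,\wneg\psi$ belongs to $\Delta$.

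Define a valuation on atoms by
\[
v(p)=\begin{cases}{\bf 1} & \text{if } p\in\Delta \text{ and } \wneg p\notin\Delta,\\ {\bf \frac{1}{2}} & \text{if } p\in\Delta \text{ and } \wneg p\in\Delta,\\ {\bf 0} & \text{if } p\notin\Delta,\end{cases}
\]
and let $\bar v$ be its unique extension to a $\Sigma$-homomorphism into $M$. The crucial \emph{truth lemma} asserts that for every $\psi$ the same trichotomy holds with $\psi$ in place of $p$; in particular, $\bar v(\psi)\in D$ iff $\psi\in\Delta$. I would prove this by induction on the complexity of $\psi$. The classical-connective cases combine the maximality clauses above with the De Morgan-style axioms $(\axnege)$, $(\axnegou)$, $(\axnegimp)$ used to compute when $\wneg\psi\in\Delta$ for compound $\psi$; the case $\psi=\wneg\chi$ uses $(\axdneg)$ to relate $\wneg\wneg\chi$ to $\chi$ and applies the $\wneg$-table to the inductive value of $\chi$.

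The main technical obstacle is establishing the derived equivalence $\wneg\sneg\psi\leftrightarrow\psi$, needed to handle the $\sneg$-subcase. The implication $\wneg\sneg\psi\to\psi$ is the formula flagged as derivable in the footnote; for the converse, instantiate $(\axtnd)$ at $\sneg\psi$ to obtain $\sneg\psi\lor\wneg\sneg\psi$ and argue by $\lor$-elimination under the hypothesis $\psi$: the first disjunct together with $\psi$ yields $\wneg\sneg\psi$ via $(\axexpl)$, while the second disjunct is $\wneg\sneg\psi$ itself. With this equivalence in hand the induction closes. Finally, $\Gamma\subseteq\Delta$ forces $\bar v[\Gamma]\subseteq D$ and $\varphi\notin\Delta$ forces $\bar v(\varphi)\notin D$, whence $\Gamma\not\vDash_{\mptz}\varphi$, giving completeness by contraposition.
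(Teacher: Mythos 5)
The paper offers no proof of this theorem at all: it is imported verbatim from \cite[Theorem~4.4.56]{CC16}, so there is no ``paper's route'' to compare against. Your Lindenbaum-style argument is the standard way to prove completeness of a Hilbert calculus with respect to a finite characteristic matrix, and as a sketch it is correct. The load-bearing points are all present and in the right places: relative maximality of $\Delta$ (w.r.t.\ not deriving $\varphi$) gives primeness and hence the disjunction clause via (\axoue); the implication clause needs (\axouimp) for the direction ``$\psi\notin\Delta$ implies $\psi\to\chi\in\Delta$''; the trichotomy on $\psi\in\Delta$ versus $\wneg\psi\in\Delta$ is exactly what is needed to separate the values ${\bf 1}$ and ${\bf \frac12}$, and it propagates through the connectives via (\axnege), (\axnegou), (\axnegimp) and (\axdneg) in a way that matches the truth tables (I checked the $\to$ case, which is the least obvious one, and it does close). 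You also correctly isolate the one genuinely delicate spot, namely the $\sneg$-subcase: one needs $\wneg\sneg\psi\leftrightarrow\psi$ to rule out $\sneg\psi$ and $\wneg\sneg\psi$ cohabiting in $\Delta$ (which would force the impossible value ${\bf \frac12}$ for $\sneg\psi$), and your derivation of $\psi\to\wneg\sneg\psi$ from (\axtnd) instantiated at $\sneg\psi$ plus (\axexpl) is right, while the converse is exactly the schema the paper's footnote flags as derivable. The only thing I would ask you to make explicit in a full write-up is that $\Delta$ is deductively closed and non-trivial (both follow from its relative maximality), since several of your clauses silently use detachment inside $\Delta$.
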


\

\noindent
The latter result can be extended to twist-structures semantics, as shown in ~\cite{CFG18}. Indeed,  \lptz\ coincides (up to signature) with $\lfium_\cons$,  an \lfi\ defined over the signature $\Sigma_\cons=\{\land,\lor,\imp, \wneg, \cons\}$  such that the consistency operator $\cons$ is defined as \\

\begin{center}
\begin{tabular}{|c||c|} \hline
$\quad$ & $\cons$ \\
 \hline \hline
     {\bf 1}   & {\bf 1}    \\ \hline
     ${\bf \frac12}$ & {\bf 0}  \\ \hline
     {\bf 0}   & {\bf 1}    \\ \hline
\end{tabular}
\end{center}

\ \\

\noindent
In $\lfium_\cons$ the strong negation $\sneg$ is defined as $\sneg\varphi =_{def} \varphi \to \bot_\varphi$ such that $\bot_\varphi =_{def} (\varphi \wedge \neg\varphi) \wedge \cons\varphi$. On the other hand, the consistency operator $\cons$ is defined in \lptz\ as $\cons \varphi=_{def}\sneg(\varphi \land \wneg \varphi)$.
The twist-structures semantics for $\lfium_\cons$ introduced in~\cite[Definition~9.2]{CFG18} can be adapted to \lptz\ as follows:

\begin{defi} \label{twdom}
Let $\mA=\langle A, \wedge, \vee, \to,\sneg,0,1 \rangle$ be a Boolean algebra.\footnote{In this paper the symbol $\sneg$ will be used  for denoting the strong negation of \lptz\ as well as for denoting the classical negation and its semantical interpretation (the Boolean complement in a Boolean algebra). The context will avoid possible confusions} The {\em twist domain} generated by \mA\ is the set $\tA=\{(z_1,z_2) \in A  \times A \ : \ z_1 \vee z_2 = 1\}$. 
\end{defi}

\begin{defi}  \label{defKlfi1}
Let \mA\ be a Boolean algebra. The  {\em twist structure for \lptz\ over \mA} is the algebra  $\tmA=\langle \tA, \tilde{\wedge}, \tilde{\vee}, \tilde{\imp},\tilde{\sneg},\tilde{\wneg} \rangle$ over $\Sigma$  such that the operations are defined as follows, for every $(z_1,z_2),(w_1,w_2) \in \tA$:

\begin{itemize}
 \item[(i)] $(z_1,z_2)\,\tilde{\wedge}\, (w_1,w_2) = (z_1 \wedge w_1,z_2 \vee w_2)$;
 \item[(ii)]  $(z_1,z_2)\,\tilde{\vee}\, (w_1,w_2) = (z_1 \vee w_1,z_2 \wedge w_2)$;
  \item[(iii)]  $(z_1,z_2)\,\tilde{\imp}\, (w_1,w_2) = (z_1 \imp w_1,z_1 \wedge w_2)$;
 \item[(iv)] $\tilde{{\sim}}(z_1,z_2) = ({\sim}z_1,z_1)$;
\item[(v)] $\tilde{\neg} (z_1,z_2) = (z_2,z_1)$.
\end{itemize}
\end{defi}

\

\noindent
By recalling that the consistency operator $\cons$ is defined in \lptz\ as $\cons \varphi=_{def}\sneg(\varphi \land \wneg \varphi)$, it follows that $\tilde{\circ}(z_1,z_2)=(\sneg(z_1 \land z_2),z_1 \land z_2)$.\footnote{This is why in~\cite[Definition~9.2]{CFG18} clause~(v) was replaced by this clause defining $\tilde{\circ}$.}

\begin{defi}  \label{def-semKlfi1}
The logical matrix associated to the twist structure \tmA\ is $\matA=\langle \tmA,D_\mA\rangle$ where $D_\mA = \{(z_1,z_2) \in \tA \ : \  z_1=1\} = \{(1,a) \ : \  a \in A\}$. The consequence relation associated to \matA\ will be denoted by $\vDash_{\tmA}$. Let $\matM_\lptz = \{\matA \ : \ \mA$ is a Boolean algebra$\}$ be the class of twist models for \lptz. The {\em twist-consequence relation} for \lptz\ is the consequence relation $\vDash_{\matM_\lptz}$ associated to $\matM_\lptz$, namely: $\Gamma \vDash_{\matM_\lptz} \varphi$ iff $\Gamma \vDash_{\tmA} \varphi$ for every Boolean algebra \mA.
\end{defi}

\begin{obs}  \label{twistA2}
In~\cite[Theorem~9.6]{CFG18} it was shown that \lptz\ is sound and complete w.r.t. twist structures  semantics, namely: $\Gamma \vdash_{\lptz} \varphi$ iff $\Gamma\vDash_{\matM_\lptz} \varphi$, for every set of formulas $\Gamma \cup\{\varphi\}$. On the other hand, if $\mathbb{A}_2$  is the two-element Boolean algebra with domain $\{0,1\}$ then $\mathcal{T}_{\mathbb{A}_2}$ consists of three elements: $(1,0)$, $(1,1)$ and $(0,1)$. By identifying these elements with {\bf 1}, ${\bf \frac{1}{2}}$ and {\bf 0}, respectively, then $\mathcal{T}_{\mathbb{A}_2}$ coincides with the three-valued algebra \aptz\ underlying the matrix $\matM_{PT0}$ (recall Definition~\ref{MPT0-def}). Moreover, $\mathcal{MT}_{\mathbb{A}_2}$ coincides with $\matM_{PT0}$. Taking into consideration Theorem~\ref{adeqMPT0}, this situation is analogous to the semantical characterization of  \cpl\ w.r.t. Boolean algebras: it is enough to consider the two-element Boolean algebra $\mathbb{A}_2$.
\end{obs}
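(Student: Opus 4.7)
The substantive content of the remark lies in the explicit identification of $\mathcal{MT}_{\mathbb{A}_2}$ with $\matM_{PT0}$; the soundness-and-completeness claim itself is cited verbatim from \cite{CFG18}. My plan is first to verify the identification by direct computation, and then to observe how, combined with Theorem~\ref{adeqMPT0}, it supplies the completeness half of the cited result in a single step, while soundness over arbitrary Boolean algebras remains the real content of \cite[Theorem~9.6]{CFG18}.

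To check the identification, I would first enumerate $\mathcal{T}_{\mathbb{A}_2}$: among the four pairs in $\{0,1\}^2$, the constraint $z_1 \vee z_2 = 1$ excludes only $(0,0)$, leaving $(1,0)$, $(1,1)$, $(0,1)$, which I identify with ${\bf 1}$, ${\bf \frac{1}{2}}$, ${\bf 0}$ respectively. Then, for each of the five operators of Definition~\ref{defKlfi1} instantiated with $\mA = \mathbb{A}_2$, I would compute the resulting table and compare it entry-by-entry against Definition~\ref{MPT0-def}. The verifications are purely mechanical; for instance $(1,1)\,\tilde{\imp}\,(0,1) = (1 \imp 0,\ 1 \wedge 1) = (0,1)$, matching ${\bf \frac{1}{2}} \imp {\bf 0} = {\bf 0}$, and $\tilde{\wneg}(1,1) = (1,1)$, matching $\wneg{\bf \frac{1}{2}} = {\bf \frac{1}{2}}$. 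The set of designated values $D_{\mathbb{A}_2} = \{(1,a) : a \in \{0,1\}\}$ is exactly $\{{\bf 1}, {\bf \frac{1}{2}}\}$, which is the $D$ of Definition~\ref{MPT0-def}. This yields $\mathcal{MT}_{\mathbb{A}_2} = \matM_{PT0}$ as logical matrices.

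Once this identification is fixed, the completeness direction becomes transparent: if $\Gamma \vDash_{\matM_\lptz} \varphi$ then in particular $\Gamma \vDash_{\mathcal{MT}_{\mathbb{A}_2}} \varphi$, which by the identification is $\Gamma \vDash_{\mptz} \varphi$, whence $\Gamma \vdash_{\lptz} \varphi$ by Theorem~\ref{adeqMPT0}. Soundness does not follow from the identification and is the real work in \cite[Theorem~9.6]{CFG18}: one verifies axiom-by-axiom that each instance of (\kax)--(\axnegimp) is assigned a pair whose first coordinate equals $1$ in every twist structure $\mathcal{T}_\mA$, and that \MP\ preserves designation (which amounts to noting that $(1 \imp c,\ 1 \wedge d) = (c,d)$, so a designated implication forces $c = 1$). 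The main obstacle, though routine, is the asymmetric twist implication $(z_1,z_2)\,\tilde{\imp}\,(w_1,w_2) = (z_1 \imp w_1,\ z_1 \wedge w_2)$: its second coordinate is not the Boolean dual of the first, so validating the axioms involving nested implications or the interaction between $\wneg$ and $\imp$ (notably \axnegimp) requires careful bookkeeping of both coordinates over an arbitrary Boolean algebra.
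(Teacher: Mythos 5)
Your proposal is correct and proceeds exactly as the paper intends: the remark's only substantive content is the identification of $\mathcal{T}_{\mathbb{A}_2}$ with \aptz\ and of $\mathcal{MT}_{\mathbb{A}_2}$ with $\matM_{PT0}$, which is verified by the same direct enumeration and table-by-table computation you carry out, with the adequacy claim itself cited from~\cite{CFG18}. Your added observation that the identification plus Theorem~\ref{adeqMPT0} yields the completeness half of the cited result in one step, leaving soundness over arbitrary Boolean algebras as the genuine content of \cite[Theorem~9.6]{CFG18}, is precisely the point of the paper's closing analogy with the characterization of \cpl\ by the two-element Boolean algebra.
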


\section{The logic \qlptz}

A first-order version of \lptz, called \qlptz, was  proposed in~\cite{CFG19} under the equivalent (up to language) form of $\qlfium_\circ$.\footnote{That is, by taking $\circ$ instead of $\sneg$ as a primitive connective.} For convenience, we reproduce here the main features  of \qlptz. 
 
\begin{defi} \label{fosig} Let $Var=\{v_1,v_2,\ldots\}$ be a denumerable set of individual variables. A first-order signature $\Theta$ for \qlptz\ is given as follows: 
\begin{itemize}
  \item[-] a set $\mathcal{C}$ of individual constants; 
  \item[-] for each $n\geq 1$, a set $\mathcal{F}_n$ of function symbols of arity $n$,
  \item[-] for each $n\geq 1$, a nonempty set $\mathcal{P}_n$ of predicate symbols of arity $n$.
\end{itemize}
\end{defi}

\

The sets of terms and formulas generated by a signature $\Theta$ will be denoted by \tert\ and \fort, respectively. The set of closed formulas (or sentences) and the set of closed terms (terms without variables) over $\Theta$ will be denoted by \sent\ and \ctert, respectively. The formula obtained from a given formula $\varphi$ by substituting every free occurrence of a variable $x$ by a term $t$ will be denoted by $\varphi[x/t]$.

\begin{defi} Let $\Theta$ be a first-order signature. The logic \qlptz\ is obtained from  \lptz\ by adding the following axioms and rules:\\[1mm]

{\bf Axioms Schemas:}\\

$\begin{array}{ll}
{\bf (Ax\exists)} & \varphi[x/t]\to\exists x\varphi, \ \mbox{ if $t$ is a term free for $x$ in $\varphi$}\\[3mm]
{\bf (Ax\forall)} & \forall x\varphi\to\varphi[x/t], \ \mbox{ if $t$ is a term free for $x$ in $\varphi$}\\[3mm]
{\bf (Ax\neg\exists)} & \neg\exists x\varphi\leftrightarrow \forall x \neg\varphi\\[3mm]
{\bf (Ax\neg\forall)} & \neg\forall x\varphi\leftrightarrow \exists x  \neg\varphi
\end{array}$\\

\

{\bf Inference rules:}\\

$\begin{array}{ll}
{\bf (\exists\mbox{\bf -In})} & \dfrac{\varphi\to\psi}{\exists x\varphi\to\psi}, \ \mbox{  where $x$ does not occur free in $\psi$}\\[4mm]
{\bf (\forall\mbox{\bf -In})} & \dfrac{\varphi\to\psi}{\varphi\to\forall x\psi}, \ \mbox{ where $x$ does not occur free in $\varphi$}
\end{array}$
\end{defi}\

The consequence relation of \qlptz\ will be denoted by $\vdash_{\qlptz}$.

\section{Twist structures semantics for \qlptz} \label{swapfol}

In~\cite{CFG19} a  semantics of first-order structures based on  twist structures for $\lfium_\circ$ was proposed for $\qlfium_\circ$. That semantics will be briefly recalled here, adapted to  \qlptz. From now on,  only complete Bolean algebras will be considered.

\begin{defi} \label{tstru} let \mA\ be a complete Boolean algebra. Let \matA\ be the logical matrix associated to a twist structure \tmA\  for \lptz, and let $\Theta$ be a first-order signature (see Definition~\ref{fosig}). A  (first-order) {\em structure} over \matA\  and $\Theta$ (or a {\em\qlptz-structure} over $\Theta$) is pair $\mathfrak{A} = \langle U, I_{\mathfrak{A}} \rangle$ such that $U$ is a nonempty set (the domain or universe of the structure) and $I_{\mathfrak{A}}$ is an interpretation function which assigns:\vspace*{-3mm}
\begin{itemize}
\item[-]  an element $I_\mathfrak{A}(c)$ of $U$ to each individual constant $c \in \mathcal{C}$; 
\item[-] a function $I_\mathfrak{A}(f): U^n \to U$ to each function symbol $f$ of arity $n$; 
\item[-] a function  $I_\mathfrak{A}(P): U^n \to \tA$ to each predicate symbol $P$ of arity $n$.
\end{itemize}
\end{defi}

\begin{Not}From now on, we will write $c^\mathfrak{A}$, $f^\mathfrak{A}$ and $P^\mathfrak{A}$ instead of $I_\mathfrak{A}(c)$, $I_\mathfrak{A}(f)$ and $I_\mathfrak{A}(P)$ to denote the interpretation of an individual constant symbol $c$, a function symbol $f$ and a predicate symbol $P$, respectively.
\end{Not}

\begin{defi} Given  a structure $\mathfrak{A}$ over \matA\ and $\Theta$,  an {\em assignment} over  $\mathfrak{A}$ is any function $\mu: Var \to U$. 
\end{defi}

\begin{defi}~\label{term} 
 Given  a structure $\mathfrak{A}$ over \matA\ and $\Theta$, and given an assignment $\mu: Var \to U$ we define recursively, for each term $t$,  an element $\termvalue{t}^\mathfrak{A}_\mu$ in $U$ as follows:
\begin{itemize}
\item[-] $\termvalue{c}^\mathfrak{A}_\mu = c^\mathfrak{A}$ if $c$ is an individual constant;
\item[-] $\termvalue{x}^\mathfrak{A}_\mu = \mu(x)$ if $x$ is a variable;
\item[-] $\termvalue{f(t_1,\ldots,t_n)}^\mathfrak{A}_\mu = f^\mathfrak{A}(\termvalue{t_1}^\mathfrak{A}_\mu,\ldots,\termvalue{t_n}^\mathfrak{A}_\mu)$ if $f$ is a function symbol of arity $n$ and $t_1,\ldots,t_n$ are terms.
\end{itemize}
\end{defi}

\begin{defi} \label{diaglan} 
Let $\mathfrak{A}$ be  a structure over \matA\ and $\Theta$. The {\em diagram language} of $\mathfrak{A}$ is the set of formulas $For(\Theta_U)$, where  $\Theta_U$ is the signature obtained from $\Theta$ by adding, for each element $a \in U$,  a  new individual constant $\bar{a}$ . 
\end{defi}

\begin{defi} \label{extA}
The structure $\widehat{\mathfrak{A}} = \langle U, I_{\widehat{\mathfrak{A}}} \rangle$ over $\Theta_U$  is the structure $\mathfrak{A}$ over $\Theta$ extended by $I_{\widehat{\mathfrak{A}}}(\bar{a})=a$ for every $a \in A$. 
\end{defi}

\noindent
It is worth noting that $s^{\widehat{\mathfrak{A}}} = s^\mathfrak{A}$ whenever $s$ is a symbol (individual constant, function symbol or predicate symbol) of $\Theta$.

\begin{Not} \label{SentA}
The set of sentences or closed formulas (that is, formulas without free variables) of the diagram language $For(\Theta_U)$ is denoted by $Sen(\Theta_U)$, and the set of terms and of closed terms over $\Theta_U$ will be denoted by $Ter(\Theta_U)$ and $CTer(\Theta_U)$, respectively. If  $t$ is a closed term we can write $\termvalue{t}^\mathfrak{A}$ instead of $\termvalue{t}^\mathfrak{A}_\mu$, for any   assignment $\mu$, since it does not depend on $\mu$.
\end{Not}

\begin{Not} \label{projz}
From now on, if $z \in \tA$ then  $(z)_1$ and $(z)_2$ (or simply $z_1$ and $z_2$) will denote the first and second coordinates of $z$, respectively. 
\end{Not}

\begin{defi}[\qlptz\ interpretation maps]~\label{val}
Let \mA\ be a complete Boolean algebra, and let $\mathfrak{A}$ be  a structure over \matA\ and $\Theta$. The {\em interpretation map} for \qlptz\ over $\mathfrak{A} $ and  \matA\ is a function $\termvalue{\cdot}^{\mathfrak{A}}:Sen(\Theta_U) \to \tA$ satisfying the following clauses (using Notation~\ref{projz} in clauses~(iv) and~(v)):

\begin{enumerate}
\item[($i$)] $\termvalue{P(t_1,\ldots,t_n)}^{\mathfrak{A}} = P^{\mathfrak{A}}(\termvalue{t_1}^{\widehat{\mathfrak{A}}},\ldots,\termvalue{t_n}^{\widehat{\mathfrak{A}}}) $, if $P(t_1,\ldots,t_n)$ is atomic; \
\item[($ii$)] $\termvalue{\#\varphi}^{\mathfrak{A}} = \tilde{\#} \termvalue{\varphi}^{\mathfrak{A}}$, for every $\#\in \{\neg, \sneg\}$;
\item[($iii$)]  $\termvalue{\varphi \# \psi}^{\mathfrak{A}} = \termvalue{\varphi}^{\mathfrak{A}} \, \tilde{\#} \, \termvalue{\psi}^{\mathfrak{A}}$, for every $\#\in \{\wedge,\vee, \to\}$;
\item[($iv$)] $\termvalue{\forall x\varphi}^{\mathfrak{A}} = \big(\bigwedge_{a \in U} (\termvalue{\varphi[x/\bar{a}]}^{\mathfrak{A}})_1,\bigvee_{a \in U} (\termvalue{\varphi[x/\bar{a}]}^{\mathfrak{A}})_2\big)$.
\item[($v$)] $\termvalue{\exists x\varphi}^{\mathfrak{A}} = \big(\bigvee_{a \in U} (\termvalue{\varphi[x/\bar{a}]}^{\mathfrak{A}})_1,\bigwedge_{a \in U} (\termvalue{\varphi[x/\bar{a}]}^{\mathfrak{A}})_2\big)$.
\end{enumerate}
\end{defi}

\begin{obs} \label{TA-lat}
A partial order can be naturally introduced in $\tmA$ as follows:  $z \leq w$ iff $z_1 \leq w_1$ and $z_2 \geq w_2$. It is easy to see that, with this order, \tmA\ is a complete lattice (since $\mA$ is a complete Boolean algebra), in which
\begin{itemize}
\item[] $\bigwedge_{i \in I} z_i = \big(\bigwedge_{i \in I} (z_i)_1,\bigvee_{i \in I} (z_i)_2\big)$, and
\item[] $\bigvee_{i \in I} z_i = \big(\bigvee_{i \in I} (z_i)_1,\bigwedge_{i \in I} (z_i)_2\big)$. 
\end{itemize}
Note that ${\bf 1}=_{def}(1,0)$ and ${\bf 0}=_{def}(0,1)$ are the top and bottom elements of \tmA, respectively. These considerations justify the definition of the interpretation of the quantifiers given in Definition~\ref{val}(iv) and~(v).
\end{obs}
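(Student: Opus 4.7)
The plan is to transfer the lattice structure of the complete Boolean algebra $\mA$ to $\tmA$ componentwise, using the convention that the second coordinate is ordered oppositely. Reflexivity, antisymmetry, and transitivity of $\leq$ on $\tmA$ are immediate from the corresponding properties of $\leq$ and $\geq$ on $\mA$, since by definition $z \leq w$ amounts to $z_1 \leq w_1$ and $w_2 \leq z_2$ in $\mA$. Hence $(\tmA, \leq)$ is a partial order inherited from the product order on $\mA \times \mA^{\mathrm{op}}$, restricted to the twist domain $\tA$.

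For the formulas describing arbitrary meets and joins, the first task is to verify that the proposed infima and suprema actually lie in $\tA$. Given a family $\{z_i\}_{i \in I} \subseteq \tA$, we have $(z_i)_1 \vee (z_i)_2 = 1$ for every $i$. To check that $\bigwedge_{i \in I}(z_i)_1 \vee \bigvee_{i \in I}(z_i)_2 = 1$, observe that $(z_i)_1 \vee (z_i)_2 = 1$ yields $\sneg(z_i)_1 \leq (z_i)_2$ for each $i$, hence $\bigvee_i \sneg(z_i)_1 \leq \bigvee_i (z_i)_2$. By the infinite De Morgan law, valid in any complete Boolean algebra, $\bigvee_i \sneg(z_i)_1 = \sneg \bigwedge_i (z_i)_1$, and so $\sneg \bigwedge_i (z_i)_1 \leq \bigvee_i (z_i)_2$; joining $\bigwedge_i (z_i)_1$ to both sides then gives $1$. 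The dual computation shows $\bigvee_i (z_i)_1 \vee \bigwedge_i (z_i)_2 = 1$ for the candidate supremum. Once closure in $\tA$ is established, the verification that these candidates are a greatest lower bound and a least upper bound respectively is routine from the universal properties of $\bigwedge$ and $\bigvee$ in $\mA$: lower-boundedness of the meet candidate is immediate, and if $w \leq z_i$ for all $i$ then $w_1 \leq (z_i)_1$ and $w_2 \geq (z_i)_2$ for all $i$, so $w_1 \leq \bigwedge_i (z_i)_1$ and $w_2 \geq \bigvee_i (z_i)_2$, i.e., $w$ lies below the candidate meet.

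The statements about top and bottom are then immediate: $(1,0), (0,1) \in \tA$ since $1 \vee 0 = 1$, and for any $z \in \tA$ we have $z_1 \leq 1$, $z_2 \geq 0$, giving $z \leq (1,0)$, while $z_1 \geq 0$, $z_2 \leq 1$ give $(0,1) \leq z$. The only substantive step in the whole argument is the twist-domain closure for infinitary meets and joins, and that is precisely where both hypotheses on $\mA$ are used in an essential way: completeness is needed for the right-hand sides to exist at all, and the infinite De Morgan law is what propagates the pointwise equations $(z_i)_1 \vee (z_i)_2 = 1$ into the infinitary equations required to stay inside $\tA$. This closure is the expected main obstacle; everything else is a direct coordinatewise transcription of standard facts about complete Boolean algebras.
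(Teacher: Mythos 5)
Your proof is correct and fills in exactly the routine verification that the paper leaves implicit behind ``it is easy to see'': the partial order is inherited coordinatewise from $\mA\times\mA^{\mathrm{op}}$, and the only substantive point --- that the candidate infinitary meets and joins stay inside the twist domain $\tA$ --- is correctly handled via the infinite De Morgan laws of the complete Boolean algebra. Nothing to add.
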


\

\noindent Recall the notation stated in Definition~\ref{diaglan}. The interpretation map can be extended to arbitrary formulas as follows:

\begin{defi} \label{vmu} Let \mA\ be a complete Boolean algebra, and let  $\mathfrak{A}$ be a structure over \matA\ and $\Theta$. Given an assignment $\mu$ over $\mathfrak{A} $, the {\em extended interpretation map} $\termvalue{\cdot}^{\mathfrak{A}}_{\mu}: For(\Theta_U) \to \tA$ is given by $\termvalue{\varphi}^{\mathfrak{A}}_{\mu} = \termvalue{\varphi[x_1/\overline{\mu(x_1)}, \ldots, x_n/\overline{\mu(x_n)}]}^{\mathfrak{A}}$, provided that the free variables of $\varphi$ occur in $\{x_1, \ldots, x_n \}$.
\end{defi}

\begin{defi} \label{consrel0} Let \mA\ be a complete Boolean algebra, and let $\mathfrak{A}$ be  a structure over \matA\ and $\Theta$.  Given a set of formulas $\Gamma \cup \{\varphi\} \subseteq For(\Theta_U)$, $\varphi$ is said to be  a {\em semantical consequence of $\Gamma$ w.r.t. $(\mathfrak{A}, \matA)$}, denoted by $\Gamma\models_{(\mathfrak{A}, \matA)}\varphi$, if the following holds:  if $\termvalue{\gamma}^{\mathfrak{A}}_{\mu} \in D$,  for every formula $\gamma \in \Gamma$ and every assignment $\mu$, then $\termvalue{\varphi}^{\mathfrak{A}}_{\mu} \in D$,  for every assignment $\mu$.
\end{defi}

\begin{defi} [Semantical consequence relation in \qlptz\ w.r.t. twist structures] \label{consrel} Let $\Gamma \cup \{\varphi\} \subseteq For(\Theta)$ be a set of formulas. Then  $\varphi$ is said to be  a {\em semantical consequence of $\Gamma$ in \qlptz\ w.r.t. first-order twist structures}, denoted by $\Gamma\models_{\qlptz}\varphi$, if $\Gamma\models_{(\mathfrak{A}, \matA)}\varphi$ for every pair $(\mathfrak{A}, \matA)$.
\end{defi}

\begin{teo} [Adequacy of \qlptz\ w.r.t. first-order twist structures (\cite{CFG19})] \label{adeq-Qlptz-twist}
For every set $\Gamma \cup\{ \varphi\}  \subseteq For(\Theta)$:   $\Gamma \vdash_\qlptz \varphi$ if and only if  $\Gamma \models_\qlptz \varphi$.\footnote{As observed above, in~\cite{CFG19} the logic $\qlfium_\circ$ was analyzed instead of \qlptz. However, both logics are equivalent, the only difference being the use of  $\circ$ instead of $\sneg$ as primitive connective. The adaptation of the adequacy result for  $\qlfium_\circ$ given in~\cite{CFG19} to the logic \qlptz\ is straightforward.}
\end{teo}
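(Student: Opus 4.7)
The strategy is to follow the adequacy argument developed for $\qlfium_\cons$ in CFG19, transferring it to \qlptz\ through the mutual definability of $\sneg$ and $\cons$ recorded in the excerpt (where $\cons\varphi \defin \sneg(\varphi \land \wneg\varphi)$, and conversely $\sneg$ is definable from $\cons$). Soundness proceeds by induction on the length of derivations in \qlptz. For the propositional axiom schemata (\kax)--(\axnegimp) and the rule (\MP), one combines Theorem~\ref{adeqMPT0} with the observation, justified by Remark~\ref{twistA2} and Definition~\ref{defKlfi1}, that the extended interpretation $\termvalue{\cdot}^{\mathfrak{A}}_\mu$ from Definition~\ref{vmu} computes every propositional connective coordinate-wise, exactly as the three-valued matrix $\matM_{PT0}$ does; hence every propositional tautology of \mptz\ is designated in every twist structure. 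Only the four quantifier axioms and the two generalization rules need direct verification. For ${\bf (Ax\forall)}$ and ${\bf (Ax\exists)}$ I would invoke Remark~\ref{TA-lat}: since $\termvalue{\forall x\varphi}^{\mathfrak{A}}$ is the lattice infimum of $\{\termvalue{\varphi[x/\bar{a}]}^{\mathfrak{A}} : a \in U\}$, its first coordinate is below every instance's first coordinate, and dually for $\exists$. For ${\bf (Ax\neg\exists)}$ and ${\bf (Ax\neg\forall)}$, clause~(v) of Definition~\ref{defKlfi1} states that $\tilde{\wneg}$ swaps the two coordinates, so a routine computation using the formulas for $\bigwedge$ and $\bigvee$ in Remark~\ref{TA-lat} shows that both sides of each biconditional evaluate to the same pair. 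The two generalization rules are handled by the standard argument that, under the side conditions on free variables, the assignment can be varied at $x$ without affecting the relevant evaluation.

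For completeness, I would mount a Henkin-style construction tailored to twist semantics. Assume $\Gamma \nvdash_\qlptz \varphi$. First enrich $\Theta$ with a denumerable supply of fresh individual constants, then extend $\Gamma$ to a set $\Delta$ that is \emph{relatively maximal} with respect to the property $\Delta \nvdash_\qlptz \varphi$ (a classical-style Lindenbaum step, which goes through because (\axouimp), (\axouda)--(\axoue), (\axtndcl) and (\axexpl) provide the disjunction property in the $\sneg$-fragment) and \emph{Henkin-saturated}, meaning that if $\exists x\psi \in \Delta$ then $\psi[x/c] \in \Delta$ for some fresh constant $c$. I would then form the canonical Boolean algebra \mA\ as the quotient of $Sen(\Theta_U)$ by the strong-equivalence relation $\psi \equiv \chi$ iff $\vdash_\qlptz (\psi \to \chi) \land (\chi \to \psi)$, which is a Boolean algebra because the classical axioms together with (\axexpl) govern the $\sneg$-fragment; the Henkin witnesses are precisely what makes this quotient \emph{complete}, with suprema and infima of term-indexed families realized by $\exists x$- and $\forall x$-formulas. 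The canonical structure $\mathfrak{A}$ takes its domain to be the set of closed terms of the enriched signature, interprets each function symbol syntactically, and interprets each predicate symbol by $P^{\mathfrak{A}}(t_1,\dots,t_n) = ([P(t_1,\dots,t_n)]_\equiv,\,[\wneg P(t_1,\dots,t_n)]_\equiv)$; axiom $(\axtnd)$ guarantees that this pair lies in $\tA$. A truth lemma, proved by induction on complexity, then yields $\termvalue{\psi}^{\mathfrak{A}} = ([\psi]_\equiv,\,[\wneg\psi]_\equiv)$ for every sentence $\psi$ of the diagram language, so $\termvalue{\psi}^{\mathfrak{A}}$ is designated iff $\psi \in \Delta$. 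Applied to $\Gamma \subseteq \Delta$ and $\varphi \notin \Delta$, this produces a twist model witnessing $\Gamma \not\models_\qlptz \varphi$.

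The main obstacle, as usual in completeness proofs for first-order non-classical logics, lies in the quantifier step of the truth lemma: one must show that the first coordinate of $\termvalue{\forall x\psi}^{\mathfrak{A}}$, which by Definition~\ref{val}(iv) equals $\bigwedge_a [\psi[x/\bar{a}]]_\equiv$ in the canonical quotient, coincides with $[\forall x\psi]_\equiv$, and simultaneously that its second coordinate equals $[\wneg\forall x\psi]_\equiv$. The first half demands that the canonical algebra be complete in the Boolean sense and that the Henkin constants be dense enough to force these meets; this is the point where axioms ${\bf (Ax\forall)}$, ${\bf (Ax\exists)}$ and the generalization rules must be wielded carefully, and where Henkin saturation has to be performed jointly for $\psi$ and for $\wneg\psi$, since both coordinates are being computed at once. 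The second half is where ${\bf (Ax\neg\forall)}$ and ${\bf (Ax\neg\exists)}$ pay off, as they are exactly what allows $\wneg$ to be pulled past a quantifier inside the equivalence classes. Once these two coordinated computations are settled, the remainder of the adaptation from CFG19 is routine, and $\Gamma \vdash_\qlptz \varphi \iff \Gamma \models_\qlptz \varphi$ follows.
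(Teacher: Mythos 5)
The paper does not actually prove this theorem: it is imported wholesale from~\cite{CFG19}, with only the footnote's remark that replacing $\circ$ by $\sneg$ is harmless. So your proposal is measured against the standard argument rather than against anything in the text. Your soundness half is essentially correct: the propositional cases follow from the twist-structure adequacy of \lptz\ recorded in Remark~\ref{twistA2} (note that Theorem~\ref{adeqMPT0} alone, which concerns only the three-valued matrix, does not suffice for arbitrary \mA), the quantifier axioms follow from the lattice description in Remark~\ref{TA-lat} plus a substitution lemma you leave tacit, and the coordinate-swapping computation for ${\bf (Ax\neg\forall)}$/${\bf (Ax\neg\exists)}$ is exactly right.

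The completeness half has a genuine gap. You assert that the Henkin witnesses make the Lindenbaum--Tarski quotient a \emph{complete} Boolean algebra, ``with suprema and infima of term-indexed families realized by $\exists x$- and $\forall x$-formulas.'' This is false: Henkin saturation guarantees that the specific joins $[\exists x\psi]=\bigvee_t[\psi[x/t]]$ exist and are preserved, but it does not make the algebra complete, and Definition~\ref{tstru} requires a complete Boolean algebra since the quantifier clauses of Definition~\ref{val} take arbitrary infima and suprema over $U$. The construction as written therefore does not produce an object in the class $\matM_\lptz$. The standard repairs are either (a) to collapse to the two-element algebra via a Rasiowa--Sikorski-style ultrafilter preserving the countably many relevant joins and meets --- which is the natural route here, since $\mathcal{MT}_{\mathbb{A}_2}$ coincides with $\matM_{PT0}$ and Theorem~\ref{adeq-qmbcM5} already gives completeness with respect to that single matrix, whence completeness for the whole class of twist structures is immediate --- or (b) to embed the Lindenbaum algebra into its MacNeille completion, which preserves all existing joins and meets. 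A secondary slip: you quotient $Sen(\Theta_U)$ by \emph{absolute} interderivability $\vdash_\qlptz(\psi\to\chi)\land(\chi\to\psi)$, under which $[\psi]=1$ iff $\psi$ is a theorem; your truth lemma's conclusion that $\termvalue{\psi}^{\mathfrak{A}}$ is designated iff $\psi\in\Delta$ requires the quotient to be taken relative to $\Delta$ instead. Your instinct to track the pair $([\psi],[\wneg\psi])$ and to invoke (\axdneg), (\axnegou), (\axnege), (\axnegimp) and $\axtnd$ to make this a well-defined twist valuation despite the non-congruentiality of $\wneg$ is, however, exactly the right idea.
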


\ 

\noindent
In Remark~\ref{twistA2} was observed that $\mathcal{T}_{\mathbb{A}_2}$, the twist structure for \lptz\ defined over the two-element Boolean algebra $\mathbb{A}_2$, coincides (up to names) with the three-valued algebra \aptz\ underlying the matrix $\matM_{PT0}$ and, moreover, $\mathcal{MT}_{\mathbb{A}_2}$ coincides with the three-valued characteristic matrix $\matM_{PT0}$ of \lptz. In~\cite{CFG19}  it was  proven that \qlptz\ can be characterized by first-order structures defined  over $\matM_{PT0}$.\footnote{Once again, it is worth observing that the result obtained in~\cite{CFG19} concerns the logic $\qlfium_\circ$ instead of \qlptz.}

\begin{teo} [Adequacy of \qlptz\ w.r.t. first-order structures over $\matM_{PT0}$ (\cite{CFG19})] \label{adeq-qmbcM5}
For every set $\Gamma \cup \{\varphi\} \subseteq For(\Theta)$:  $\Gamma\vdash_{\qlptz} \varphi$ \ iff \ $\Gamma\models_{(\mathfrak{A}, \matM_{PT0})} \varphi$ for every  structure $\mathfrak{A}$ over $\Theta$ and $\matM_{PT0}$.
\end{teo}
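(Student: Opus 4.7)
The soundness direction is essentially immediate from Theorem~\ref{adeq-Qlptz-twist}. Since $\matM_{PT0}$ coincides with $\mathcal{MT}_{\mathbb{A}_2}$ (as recalled in Remark~\ref{twistA2}, using the two-element Boolean algebra $\mathbb{A}_2$), every first-order structure $\mathfrak{A}$ over $\Theta$ and $\matM_{PT0}$ is in particular a structure over \matA\ for $\mA = \mathbb{A}_2$, and hence $\Gamma \vdash_\qlptz \varphi$ implies $\Gamma \models_{(\mathfrak{A}, \matM_{PT0})} \varphi$ by specializing $\models_\qlptz$ to this case. So all the work lies in completeness.

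For the nontrivial direction, the plan is a Henkin-style canonical model construction in which the matrix is $\matM_{PT0}$ from the outset. Assume $\Gamma \not\vdash_\qlptz \varphi$. Conservatively expand $\Theta$ to $\Theta'$ by adjoining a denumerable family of fresh constants to serve as witnesses, and extend $\Gamma$ (in the language $For(\Theta')$) to a set $\Delta$ that is maximal among sets of formulas not proving $\varphi$, with the Henkin property for both quantifiers, namely that $\exists x\,\psi \in \Delta$ yields some $\psi[x/c] \in \Delta$, and dually $\forall x\,\psi \notin \Delta$ yields some $\psi[x/c] \notin \Delta$. The required closure/Lindenbaum construction is standard and goes through for \qlptz\ because the deduction metatheorem (in the restricted form available here) and the inference rules $(\exists\text{-In})$ and $(\forall\text{-In})$ suffice.

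Take as universe $U$ the set $CTer(\Theta')$ of closed terms (no quotient, since the signature has no primitive equality at this stage), and interpret each function symbol and constant in the term-model fashion. For each $n$-ary predicate $P$ and $t_1,\dots,t_n \in U$, define
\[
P^{\mathfrak{A}}(t_1,\dots,t_n) \;=\; (a,b), \quad a = \begin{cases} 1 & \text{if } P(t_1,\dots,t_n) \in \Delta \\ 0 & \text{otherwise}\end{cases}, \quad b = \begin{cases} 1 & \text{if } \neg P(t_1,\dots,t_n) \in \Delta \\ 0 & \text{otherwise}\end{cases}
\]
which lies in $\mathcal{T}_{\mathbb{A}_2}$ since $P(t_1,\dots,t_n) \vee \neg P(t_1,\dots,t_n) \in \Delta$ by axiom $(\axtnd)$ and maximality. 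The core step is then a truth lemma: for every sentence $\sigma \in Sen(\Theta'_U)$,
\[
\termvalue{\sigma}^{\mathfrak{A}} \;=\; (a_\sigma,b_\sigma), \qquad a_\sigma = 1 \Leftrightarrow \sigma \in \Delta, \quad b_\sigma = 1 \Leftrightarrow \neg\sigma \in \Delta,
\]
proved by induction on $\sigma$. The propositional cases reduce via the \lptz-axioms to membership manipulations (the axioms $(\axnegou)$, $(\axnege)$, $(\axnegimp)$, $(\axdneg)$ handle exactly the behaviour of $\neg$ on complex formulas, so that the $\neg$-component is tracked correctly). Applying this truth lemma gives $\termvalue{\gamma}^{\mathfrak{A}} \in \{(1,0),(1,1)\} = D$ for all $\gamma \in \Gamma$ while $\termvalue{\varphi}^{\mathfrak{A}} \notin D$, providing the required counterexample over $\matM_{PT0}$.

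The main obstacle is the quantifier step of the truth lemma, where one must check, for instance, that $(\termvalue{\forall x\psi}^{\mathfrak{A}})_1 = \bigwedge_{t\in U} (\termvalue{\psi[x/t]}^{\mathfrak{A}})_1 = 1$ iff $\forall x\psi \in \Delta$, and similarly that $(\termvalue{\forall x\psi}^{\mathfrak{A}})_2 = \bigvee_{t\in U}(\termvalue{\psi[x/t]}^{\mathfrak{A}})_2 = 1$ iff $\neg\forall x\psi \in \Delta$. The left-to-right directions use the Henkin witness property (for $\forall$ on the first coordinate, and for $\exists$ via $(\textbf{Ax}\neg\forall)$ on the second); the right-to-left directions use $(\textbf{Ax}\forall)$, $(\textbf{Ax}\exists)$, $(\textbf{Ax}\neg\forall)$, $(\textbf{Ax}\neg\exists)$ together with maximality. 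The crucial feature that makes the passage from general Boolean values to two-valued Henkin truth values work smoothly is precisely that $\bigwedge$ and $\bigvee$ in $\mathbb{A}_2$ degenerate to universal/existential quantification over truth, so no ultrafilter extraction is needed. Everything else is bookkeeping on top of the adequacy of \lptz\ (Theorem~\ref{adeqMPT0}) and of \qlptz\ w.r.t.\ twist structures (Theorem~\ref{adeq-Qlptz-twist}).
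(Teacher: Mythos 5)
Your argument is essentially correct in outline, but note first that the paper does not actually prove Theorem~\ref{adeq-qmbcM5}: it is imported wholesale from~\cite{CFG19} (where it is established for $\qlfium_\circ$) with the remark that the translation to \qlptz\ is routine, so there is no in-paper proof to compare against. What you supply is a self-contained Henkin-style canonical model built directly over the three-valued matrix $\matM_{PT0}$, and this is the right strategy: soundness does specialize from Theorem~\ref{adeq-Qlptz-twist} via Remark~\ref{twistA2}, and for completeness the maximal non-trivial, prime, witness-complete set $\Delta$ together with a truth lemma tracking the pair of conditions ``$\sigma\in\Delta$'' and ``$\wneg\sigma\in\Delta$'' is exactly what the axioms $(\axtnd)$, $(\axdneg)$, $(\axnegou)$, $(\axnege)$, $(\axnegimp)$ and ${\bf (Ax\neg\exists)}$, ${\bf (Ax\neg\forall)}$ are designed to support, with $(\axtnd)$ plus primeness guaranteeing that the assigned pair always lands in the twist domain. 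Two points should be made explicit rather than left as bookkeeping. First, the consequence relation of Definition~\ref{consrel0} is global in the assignments (premises must be designated under \emph{every} assignment), so before running the Lindenbaum--Henkin machinery you should reduce to sentences, e.g.\ by passing to universal closures of $\Gamma$ (generalization is derivable from the rule $(\forall\mbox{\bf -In})$) and replacing the free variables of $\varphi$ by fresh constants; otherwise the final countermodel does not literally witness the failure of $\models_{(\mathfrak{A},\matM_{PT0})}$. Second, the ``dual'' witness property for $\forall$ that you posit cannot be obtained through the paraconsistent negation $\wneg$ (which, via ${\bf (Ax\neg\forall)}$, only yields a witness for $\wneg\psi$ when $\wneg\forall x\psi\in\Delta$); it must be routed through the classical negation $\sneg$, using that $\sneg\forall x\psi\to\exists x\,\sneg\psi$ is derivable in the classical $\{\land,\lor,\to,\sneg\}$-plus-quantifiers fragment, after which $\sneg\psi[x/c]\in\Delta$ forces $\psi[x/c]\notin\Delta$ by $(\axexpl)$ and the non-triviality of $\Delta$. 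With these two points spelled out, your construction is a correct and more informative route than the paper's citation, and is presumably close to what~\cite{CFG19} itself does.
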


\begin{obs}
It is worth observing that Theorem~\ref{adeq-qmbcM5} constitutes a variant of the  adequacy theorem of first-order \dacdot\ w.r.t. first-order structures given in~\cite{dot:85a}. Indeed, both logics are the same  (up to language), and the semantic structures are the same, up to presentation.
\end{obs}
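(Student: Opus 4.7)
The claim is not a theorem to be proved from scratch but a verification that Theorem~\ref{adeq-qmbcM5} is essentially the adequacy result of~\cite{dot:85a} rephrased. The plan is therefore to exhibit, in two stages, a dictionary between the two formalisms and then check that both the syntactic consequence relations and the semantic consequence relations are preserved under that dictionary.

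First, at the propositional level, I would recall the presentation of \dacdot\ (da Costa--D'Ottaviano's three-valued paraconsistent logic) used in~\cite{dot:85a}, including its primitive connectives and truth-tables on $\{{\bf 1},{\bf \frac12},{\bf 0}\}$ with designated set $\{{\bf 1},{\bf \frac12}\}$. I would then exhibit, for each connective of \lptz\ (the signature $\Sigma=\{\land,\lor,\imp,\sneg,\wneg\}$ of Definition~\ref{MPT0-def}), an \emph{equivalent} term in the signature of \dacdot, and conversely for each connective of \dacdot\ an equivalent term in $\Sigma$. This is a pure truth-table verification using Definition~\ref{MPT0-def}: for example, one checks that the classical-style negation $\sneg$ of \lptz\ is definable from the paraconsistent negation and the determinedness/consistency connective of \dacdot, and that the paraconsistent $\wneg$ of \lptz\ agrees with the paraconsistent negation of \dacdot. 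Once such a translation $(\cdot)^\flat$ and its inverse $(\cdot)^\sharp$ are fixed and shown to be compositional and to commute with the truth-functions, it follows that $\Gamma\vDash_{\mptz}\varphi$ iff $\Gamma^\flat\vDash_{\dacdot}\varphi^\flat$. This is the ``same up to language'' part at the propositional level, already invoked informally in Section~\ref{Intro-twist}.

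Second, at the first-order level, I would verify that the quantifier clauses used in~\cite{dot:85a} match the clauses~(iv) and~(v) of Definition~\ref{val} under the translation. Concretely, in~\cite{dot:85a} the interpretation of $\forall x\varphi$ on an \dacdot-structure is the pointwise infimum (with respect to the natural order on $\{{\bf 1},{\bf \frac12},{\bf 0}\}$) of the values $\termvalue{\varphi[x/\bar a]}$ for $a$ running over the domain, and dually for $\exists x$. Interpreting these same clauses after translation inside \tmA\ for the two-element Boolean algebra $\mathbb{A}_2$, and using Remark~\ref{TA-lat} identifying $\mathcal{T}_{\mathbb{A}_2}\cong\aptz$, one sees that the componentwise infimum/supremum prescribed by Definition~\ref{val}(iv)--(v) reduces exactly to the pointwise min/max on the three-valued chain of \dacdot. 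Hence the extended interpretation map $\termvalue{\cdot}^\mathfrak{A}_\mu$ of Definition~\ref{vmu} coincides, modulo the translation, with the one used in~\cite{dot:85a}, and the same holds for the semantic consequence relation of Definitions~\ref{consrel0} and~\ref{consrel}.

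Combining the two, Theorem~\ref{adeq-qmbcM5} transports to the adequacy theorem of~\cite{dot:85a} and vice versa. The only real work is bookkeeping: producing an explicit translation of connectives and checking that it is an isomorphism of the three-valued algebras (up to relabeling of ${\bf 1},{\bf \frac12},{\bf 0}$) and that the translated axioms and rules of \qlptz\ are derivable in first-order \dacdot\ and conversely. I expect the main minor obstacle to be the quantifier axioms ${\bf (Ax\neg\exists)}$ and ${\bf (Ax\neg\forall)}$, which govern the commutation of the paraconsistent negation $\wneg$ with the quantifiers and must be matched against the corresponding De Morgan-style axioms in the presentation of~\cite{dot:85a}; once the propositional dictionary is fixed, however, this reduces to checking that $\wneg$ and the first coordinate-versus-second coordinate swap in Definition~\ref{defKlfi1}(v) agree with the paraconsistent negation of \dacdot\ on the three-valued chain, which is immediate from the truth-tables.
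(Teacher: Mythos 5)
Your proposal is correct and matches the paper's (implicit) justification: the remark is stated without proof, resting on the propositional equivalence of \lptz\ with \dacdot\ established in the cited literature and on the observation (Remark~\ref{twistA2} and Remark~\ref{TA-lat}) that over $\mathbb{A}_2$ the componentwise quantifier clauses collapse to the inf/sup on the three-valued chain. Your dictionary-plus-quantifier-clause verification is exactly the bookkeeping the authors are alluding to.
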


\section{Twist-valued models for set theory} \label{twistm}

As mentioned before, a  three-valued model for a paraconsistent set theory based on lattice-valued models for \ZF, as a non-classical variant of the  well-known Scott-Solovay-Vop$\check{e}$nka Boolean-valued models for \ZF, was proposed by L\"owe and Tarafder in \cite{low:tar:15}. Specifically, they introduce a  three-valued logic called \pst\ which can be expanded with a paraconsistent negation $\neg$ (which they denote by $\ast$) and then a model for \ZF\ is constructed over the three-valued algebra \pst, as well as over its expansion $(\pst,\neg)$, along the same lines as the traditional Boolean-valued models. It is known that the logic $(\pst,\neg)$, introduced in~\cite{ConSil:14} as \mpt, coincides up to language with \lptz. We will return to this point in Section~\ref{genPS3}.

In this section, a twist-valued model for a paraconsistent set theory \ZFlp\  based on \qlptz\ will be defined, for any complete Boolean algebra \mA. It will be shown that this models constitute a generalization of the Boolean-valued models for set theory, as well as of  L\"owe-Tarafder's three-valued model. Our constructions, as well as the proof of their formal properties, are entirely based on the exposition of  Boolean-valued models given in the book~\cite{bell}, which constitutes a fundamental reference to this subject.

Consider the first order signature $\Theta_{\ZF}$ for set theory \ZF\ which consists of two binary predicates $\epsilon$ (for membership) and $\approx$ (for identity).
The logic \ZFlp\ will be defined over the first-order language $\mathcal{L}$ generated by  $\Theta_{\ZF}$ based on the signature of \qlptz, that is: the set of connectives is $\Sigma=\{\land,\lor,\imp, \sneg, \wneg\}$, together with the quantifiers $\forall$ and $\exists$ and the set $Var=\{v_1,v_2,\ldots\}$ of individual variables. As usual, $dom(f)$ and $ran(f)$ will thenote the {\em domain} and {\em image} (or {\em rank}) of a given function $f$. 
 
\begin{defi} \label{vTAdef}
Let \mA\ be a complete Boolean algebra, and let $\alpha$ be an ordinal number. Define, by transfinite recursion on $\alpha$, the following:

\begin{itemize}
\item[] ${\bf V}^{\tmA}_\alpha=\{x \ : \ x \mbox{ is a function and } ran(x) \subseteq \tA \mbox{ and }  dom(x) \subseteq {\bf V}^{\tmA}_\xi \mbox{ for some } \xi  < \alpha\}$;
\item[] ${\bf V}^{\tmA}=\{x \ : \ x \in {\bf V}^{\tmA}_\alpha  \mbox{ for some } \alpha\}$.
\end{itemize}
The class ${\bf V}^{\tmA}$ is called the {\em twist-valued model} over the complete Boolean algebra \mA.
\end{defi}

\begin{defi} \label{def-langs} 
Expand the language $\mathcal{L}$ by adding a  constant $\bar{u}$ to each element $u$ of  ${\bf V}^{\tmA}$, obtaining a language denoted by $\mathcal{L}(\tmA)$. The fragments of $\mathcal{L}$ and $\mathcal{L}(\tmA)$ without the connective $\neg$ will be denoted by  $\mathcal{L}_p$ and $\mathcal{L}_p(\tmA)$, respectively. They will be called the {\em pure \ZF-languages}. Observe that $\mathcal{L}(\tmA)$ and $\mathcal{L}_p(\tmA)$ are proper classes. Finally, a formula $\varphi$ in $\mathcal{L}_p$ is called {\em restricted} if every occurrence of a quantifier in $\varphi$ is of the form $\forall x(x \in y \to \ldots)$ or $\exists x(x \in y \land \ldots)$, or if it is proved to be equivalent in \ZFC\ to a formula of this kind. 
\end{defi}

\begin{Not} \label{consVTA}
By simplicity, and as it is done with Boolean-valued models, we will identify the element $u$ of  ${\bf V}^{\tmA}$ with its name  $\bar{u}$ in $\mathcal{L}(\tmA)$, simply writting $u$. Moreover, if $\varphi$ is a formula in which $x$ is the unique variable (possibly) occurring free, we will write $\varphi(u)$ instead of $\varphi[x/u]$ or $\varphi[x/\bar{u}]$.
\end{Not}

\begin{obs} [Induction principles] \label{IP}
Recall that, from the regularity axiom of \ZF, the sets ${\bf V}_\alpha=\{ x \ : \ x \subseteq {\bf V}_\xi  \mbox{ for some } \xi  < \alpha\}$ are definable for every ordinal $\alpha$. Moreover, in \ZF\ every set $x$ belongs to some ${\bf V}_\alpha$. This induces a function $rank(x)=_{def}$ least $\alpha$ such that $x \in {\bf V}_\alpha$. Since $rank(x) < rank(y)$ is well-founded, it induces a {\em principle of induction on rank}:
\begin{quote}
Let  $\Psi$ be a property over sets. Assume, for every set $x$, the following:
if $\Psi(y)$ holds for every $y$ such that $rank(y) <rank(x)$ then $\Psi(x)$ holds. Hence, $\Psi(x)$ holds for every $x$.
\end{quote}
From this, the following {\em Induction Principle} (IP) holds in ${\bf V}^{\tmA}$ (similar to the one for Boolean-valued models): 
\begin{quote}
Let  $\Psi$ be a property over individuals in  ${\bf V}^{\tmA}$. Assume, for every $x \in {\bf V}^{\tmA}$, the following:
if $\Psi(y)$ holds for every $y \in dom(x)$ then $\Psi(x)$ holds. Hence, $\Psi(x)$ holds for every $x \in  {\bf V}^{\tmA}$.
\end{quote}
Both induction principles are fundamental tools in order to prove properties in ${\bf V}^{\tmA}$. 
\end{obs}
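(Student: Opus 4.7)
The plan is to validate both induction principles stated in Remark~\ref{IP}, viewing them as two manifestations of the same well-foundedness phenomenon: the first uses the rank stratification of the cumulative hierarchy of \ZF, while the second is transported to ${\bf V}^{\tmA}$ via an analogously defined rank on the twist-valued hierarchy. Since both ultimately reduce to ordinary transfinite induction on ordinals, the whole argument can be arranged as two parallel ``minimal counterexample'' arguments.

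For the principle of induction on rank in \ZF, I would first recall that the cumulative hierarchy ${\bf V}_\alpha$, defined by ${\bf V}_0 = \emptyset$, ${\bf V}_{\alpha+1}=\mathcal{P}({\bf V}_\alpha)$, and ${\bf V}_\lambda = \bigcup_{\beta<\lambda}{\bf V}_\beta$ at limits, exhausts the universe of sets by virtue of the axiom of regularity; hence $rank(x)$ is defined for every set. Given a property $\Psi$ satisfying the hypothesis of the principle, assume for contradiction that $\Psi$ fails somewhere, and let $\alpha$ be the least ordinal admitting some $x_0$ with $rank(x_0)=\alpha$ and $\neg\Psi(x_0)$. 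Minimality of $\alpha$ forces $\Psi(y)$ to hold for every $y$ with $rank(y)<rank(x_0)$; the hypothesis then delivers $\Psi(x_0)$, the desired contradiction.

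For the Induction Principle inside ${\bf V}^{\tmA}$, I would introduce the analogue $rank^{\tmA}(x) =$ the least ordinal $\alpha$ such that $x \in {\bf V}^{\tmA}_\alpha$, defined on all of ${\bf V}^{\tmA}$ because of Definition~\ref{vTAdef}. The key observation is that if $y \in dom(x)$ then, by that same definition, $y \in {\bf V}^{\tmA}_\xi$ for some $\xi < rank^{\tmA}(x)$, whence $rank^{\tmA}(y) \leq \xi < rank^{\tmA}(x)$. Armed with this strict monotonicity, I would repeat the minimal counterexample strategy: given $\Psi$ satisfying the hypothesis, let $\alpha$ be least such that $\Psi$ fails on some $x_0 \in {\bf V}^{\tmA}$ with $rank^{\tmA}(x_0)=\alpha$; then $\Psi(y)$ holds for all $y$ with smaller $rank^{\tmA}$, in particular for every $y \in dom(x_0)$, so the hypothesis yields $\Psi(x_0)$, contradiction.

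The only subtle step is the monotonicity $y \in dom(x) \Rightarrow rank^{\tmA}(y) < rank^{\tmA}(x)$, which is the engine transferring the well-foundedness of the ordinals to well-foundedness of the $dom$ relation on ${\bf V}^{\tmA}$. This is immediate from the recursive clause in Definition~\ref{vTAdef}, but it is worth recording explicitly, because it is precisely this feature that makes ${\bf V}^{\tmA}$ suitable for recursive definitions of the interpretation of $\epsilon$ and $\approx$ later on, and it is what guarantees that IP is not merely a formal analogue of ordinary $\in$-induction but genuinely applies to every twist-valued set.
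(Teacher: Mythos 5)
Your proposal is correct and matches what the paper intends: the remark states these principles as standard facts (deferring to the analogous treatment for Boolean-valued models in Bell), and the argument it implicitly relies on is exactly your minimal-counterexample reduction to well-foundedness of the ordinals, with the key monotonicity $y \in dom(x) \Rightarrow rank^{\tmA}(y) < rank^{\tmA}(x)$ read off from Definition~\ref{vTAdef}. Your explicit isolation of that monotonicity step is a useful clarification but not a departure from the paper's route.
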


\begin{defi} \label{defint-twist}
Define  by induction on the complexity in  $\mathcal{L}(\tmA)$ a mapping $\termvalue{\cdot}^{{\bf V}^{\tmA}}$ (or simply $\termvalue{\cdot}$) assigning to each closed formula  in $\mathcal{L}(\tmA)$ a value in $\tA$ as follows:

$$
\begin{array}{lll}
\termvalue{u \pert v}&=& \displaystyle \bigvee_{x \in dom(v)} (v(x) \, \tilde{\land} \, \termvalue{x\approx u})\\[6mm]
&=& \displaystyle \big(\bigvee_{x \in dom(v)} ((v(x))_1 \land \termvalue{x\approx u}_1), \bigwedge_{x \in dom(v)} ((v(x))_2 \lor \termvalue{x\approx u}_2)\big)\\[6mm]
\termvalue{u \approx v}&=& \displaystyle \bigwedge_{x \in dom(u)} (u(x) \, \tilde{\to} \, \termvalue{x\pert v}) \ \ \tilde{\land} \ \ \bigwedge_{x \in dom(v)} (v(x) \, \tilde{\to} \, \termvalue{x\pert u})\\[6mm]
&=& \displaystyle \big(\bigwedge_{x \in dom(u)} ((u(x))_1 \to \termvalue{x\pert v}_1), \bigvee_{x \in dom(u)} ((u(x))_1 \land \termvalue{x\pert v}_2)\big)\\[6mm]
&&\tilde{\land} \ \  \displaystyle  \big(\bigwedge_{x \in dom(v)} ((v(x))_1 \to \termvalue{x\pert u}_1), \bigvee_{x \in dom(v)} ((v(x))_1 \land \termvalue{x\pert u}_2)\big)\\[6mm]
\termvalue{\phi \# \psi} &=&  \termvalue{\phi} \tilde{\#} \termvalue{\psi} \ \ \ \mbox{ for $\# \in \{\land, \lor,\to\}$}\\[6mm]
\termvalue{\# \psi} &=&  \tilde{\#} \termvalue{\psi} \ \ \ \mbox{ for $\# \in \{\sneg, \wneg\}$}\\[6mm]
\termvalue{\forall x \varphi(x)} &=& \displaystyle \bigwedge_{u \in {\bf V}^{\tmA}} \termvalue{\varphi(u)} = \big(\bigwedge_{u \in {\bf V}^{\tmA}} \termvalue{\varphi(u)}_1,\bigvee_{u \in {\bf V}^{\tmA}} \termvalue{\varphi(u)}_2\big) \\[6mm]
\termvalue{\exists x \varphi(x)} &=& \displaystyle \bigvee_{u \in {\bf V}^{\tmA}} \termvalue{\varphi(u)} = \big(\bigvee_{u \in {\bf V}^{\tmA}} \termvalue{\varphi(u)}_1,\bigwedge_{u \in {\bf V}^{\tmA}} \termvalue{\varphi(u)}_2\big).
\end{array}
$$
$\termvalue{\varphi}^{{\bf V}^{\tmA}}$ is called the {\em twist truth-value} of the  sentence $\varphi \in \mathcal{L}(\tmA)$ in the twist-valued model ${\bf V}^{\tmA}$ over the complete Boolean algebra \mA.
\end{defi}

\begin{obs}
Observe that ${\bf V}^{\tmA}$ can be seen as a structure for \qlptz\ over  \matA\ and $\Theta_\ZF$ in a  wide sense, given that its domain is  a proper class. Under this identification, the twist truth-value $\termvalue{\varphi}^{{\bf V}^{\tmA}}$ of the sentence $\varphi$ in ${\bf V}^{\tmA}$ is exactly the value assigned to $\varphi$ by the interpretation map for \qlptz\ over  ${\bf V}^{\tmA}$ and  \matA\ (recall Definition~\ref{val}). In this case we assume that the mappings $(\cdot\pert\cdot)^{{\bf V}^{\tmA}}$ and $(\cdot\approx\cdot)^{{\bf V}^{\tmA}}$ are as in Definition~\ref{defint-twist}. 
\end{obs}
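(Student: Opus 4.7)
The plan is to spell out the data of a \qlptz-structure on ${\bf V}^{\tmA}$ over the signature $\Theta_\ZF$ and then verify, by induction on formula complexity, that the recursion in Definition~\ref{defint-twist} reproduces the abstract interpretation map of Definition~\ref{val} evaluated on this structure. Since $\Theta_\ZF$ contains no individual constants and no function symbols, the interpretation function $I_\mathfrak{A}$ only needs to fix the two binary predicates, and the natural (in fact forced) choice is $\pert^{\mathfrak{A}}(u,v) \defin \termvalue{u \pert v}^{{\bf V}^{\tmA}}$ and $\approx^{\mathfrak{A}}(u,v) \defin \termvalue{u \approx v}^{{\bf V}^{\tmA}}$, with the right-hand sides interpreted according to the atomic clauses of Definition~\ref{defint-twist}. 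These values are well-defined by a simultaneous transfinite recursion on $\max(rank(u),rank(v))$, because the atomic clauses of Definition~\ref{defint-twist} only refer to values of $\termvalue{\cdot}$ on arguments in $dom(u) \cup dom(v)$, which lie in some ${\bf V}^{\tmA}_\xi$ of strictly smaller rank; this is exactly the stratification of ${\bf V}^{\tmA}$ from Definition~\ref{vTAdef} combined with the induction principles discussed in Remark~\ref{IP}.

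With this structure in hand, extended to $\widehat{\mathfrak{A}}$ (Definition~\ref{extA}) by interpreting $\bar{u}$ as $u$, every closed term of $\mathcal{L}(\tmA)$ is simply a name $\bar{u}$, so $\termvalue{\bar{u}}^{\widehat{\mathfrak{A}}} = u$. The claim is then established by a routine induction on the complexity of a closed formula $\varphi \in \mathcal{L}(\tmA)$. For the atomic base case $P(\bar{u},\bar{v})$ with $P \in \{\pert,\approx\}$, clause~(i) of Definition~\ref{val} yields $P^{\mathfrak{A}}(u,v)$, which equals $\termvalue{P(u,v)}^{{\bf V}^{\tmA}}$ by the choice of $I_\mathfrak{A}$. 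For Boolean connectives and unary negations, clauses~(ii) and~(iii) of Definition~\ref{val} apply the twist-structure operations of Definition~\ref{defKlfi1} to the values of the immediate subformulas, which matches precisely the corresponding lines of Definition~\ref{defint-twist}. For $\forall$ and $\exists$, clauses~(iv)--(v) of Definition~\ref{val} take coordinate-wise infima and suprema over $a \in U = {\bf V}^{\tmA}$ of $\termvalue{\varphi[x/\bar{a}]}^{{\bf V}^{\tmA}}$, agreeing line-by-line with the last two clauses of Definition~\ref{defint-twist} once the identification $u \equiv \bar{u}$ from Notation~\ref{consVTA} is adopted. The induction hypothesis closes each step.

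The main subtlety to flag, and the reason for the phrase ``in a wide sense'' in the statement, is that Definition~\ref{tstru} stipulates a \emph{set} universe $U$, whereas here $U = {\bf V}^{\tmA}$ is a proper class. I would address this by the standard meta-theoretic move of working in a class-theoretic background such as NBG (or, informally, by treating classes as definable collections in \ZFC): then $I_\mathfrak{A}$ becomes a definable class-function on ${\bf V}^{\tmA} \times {\bf V}^{\tmA}$, and the class-indexed infima and suprema in clauses~(iv)--(v) of Definition~\ref{val} and in the quantifier clauses of Definition~\ref{defint-twist} are legitimate because the underlying complete Boolean algebra \mA\ is a genuine set, so any class of elements of \mA\ has the same supremum and infimum as some set-sized subclass. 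Once this foundational point is settled, the induction itself is on the complexity of a formula---a set-sized syntactic object---so no further technical obstacle arises.
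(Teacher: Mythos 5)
Your proposal is correct and matches what the paper intends: the remark is stated without proof, and the justification it presupposes is exactly your unwinding --- take $\pert^{\mathfrak{A}}$ and $\approx^{\mathfrak{A}}$ to be the class-functions defined by the transfinite recursion of Definition~\ref{defint-twist}, then check clause-by-clause (atomic, connectives, quantifiers) that Definitions~\ref{val} and~\ref{defint-twist} agree, with the proper-class universe handled ``in a wide sense'' as you describe. No divergence from the paper's approach and no gap.
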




\

\noindent Recall the notion of semantical consequence relation in \qlptz\ (see Definitions~\ref{consrel0} and~\ref{consrel}). This motivates the following:

\begin{defi} \label{valVTA} A sentence $\varphi$  in $\mathcal{L}(\tmA)$ is said to be {\em valid in ${\bf V}^{\tmA}$}, which is denoted by
${\bf V}^{\tmA} \models \varphi$,  if $\termvalue{\varphi}^{{\bf V}^{\tmA}} \in D_{\mA}$.
\end{defi}

\

\noindent
The semantical notions introduced above can  easily be generalized to formulas with free variables. Recall from Notation~\ref{consVTA} that $\overline{u}$ is identified with $u$ in  ${\bf V}^{\tmA}$. Then:

\begin{defi} Let $\varphi$ be a formula in  $\mathcal{L}$ whose  free variables occur in $\{x_1, \ldots, x_n \}$. Given a twist-valued model  ${\bf V}^{\tmA}$ and an assignment $\mu:Var \to {\bf V}^{\tmA}$, the  {\em twist truth-value} of  $\varphi$ in  ${\bf V}^{\tmA}$ and $\mu$ is defined as follows: $\termvalue{\varphi}^{{\bf V}^{\tmA}}_\mu=_{def}
\termvalue{\varphi[x_1/\mu(x_1), \ldots, x_n/\mu(x_n)]}^{{\bf V}^{\tmA}}$. The formula  $\varphi$ is {\em valid} in  ${\bf V}^{\tmA}$  if $\termvalue{\varphi}^{{\bf V}^{\tmA}}_\mu \in  D_{\mA}$ for every $\mu$.
\end{defi}

\begin{defi} \ZFlp\ is the logic of the class of twist-valued models, seen as \qlptz-structures over the signature $\Theta_{\ZF}$. That is, \ZFlp\ is the set of formulas of $\mathcal{L}$ which are valid in every twist-valued model ${\bf V}^{\tmA}$. 
\end{defi}

\section{Boolean-valued models versus twist-valued models} \label{twist-ZFC}

In this section, the relationship between twist-valued models and Boolean-valued models will be briefly analized. It will be shown that these models enjoy similar properties than the Boolean-valued models (when restricted to pure \ZF-languages). These similarities  will be fundamental in order to prove that  \ZFC\ is valid w.r.t. twist-valued models (see Theorem~\ref{modelZFC} below).

The following basic results for twist-valued models are analogous to the corresponding ones for Boolean-valued models obtained in~\cite[Theorem~1.17]{bell}. All these results will be proven by using the Induction Principle (IP) (recall Remark~\ref{IP}). From now on we assume that the reader is familiar with the book~\cite{bell}.

\begin{lema} \label{lemregu}
Let \mA\ be a complete Boolean algebra, and let $u\in {\bf V}^{\tmA}$. Then  $\termvalue{u \in u}_1=0$.
\end{lema}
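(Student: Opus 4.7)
\medskip

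\noindent \textbf{Proof plan.} The plan is to prove the lemma by the Induction Principle (IP) of Remark~\ref{IP}, mimicking Bell's argument for Boolean-valued models but working exclusively with first coordinates of twist truth-values. This reduction is natural: the first-coordinate operations of $\tilde{\wedge}$, $\tilde{\vee}$ and $\tilde{\imp}$ coincide with the corresponding Boolean operations in \mA, so the entire computation takes place inside the underlying complete Boolean algebra.

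As induction hypothesis assume that $\termvalue{x\pert x}_1=0$ for every $x\in dom(u)$. Unfolding Definition~\ref{defint-twist} together with the formulas for the first coordinates of $\tilde{\wedge}$ and $\tilde{\vee}$, one obtains
\[
\termvalue{u\pert u}_1 \;=\; \bigvee_{x\in dom(u)}\bigl((u(x))_1 \wedge \termvalue{x\approx u}_1\bigr),
\]
so it suffices to show that each disjunct vanishes. Fix $x\in dom(u)$. By the definition of $\termvalue{\cdot\approx\cdot}$ and the fact that $(z\tilde{\imp}w)_1=z_1\imp w_1$, the value $\termvalue{x\approx u}_1$ is the meet of two families of conditionals, one of which is indexed by $dom(u)$ and includes, in particular, the term corresponding to $y=x$. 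Hence
\[
\termvalue{x\approx u}_1 \;\leq\; (u(x))_1 \imp \termvalue{x\pert x}_1.
\]
Applying the Boolean identity $a\wedge (a\imp b)\leq b$ we get $(u(x))_1\wedge \termvalue{x\approx u}_1 \leq \termvalue{x\pert x}_1$, which equals $0$ by the induction hypothesis. Taking the supremum over $x\in dom(u)$ gives $\termvalue{u\pert u}_1=0$, completing the induction.

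I expect no serious obstacle here; the only point requiring care is the bookkeeping of coordinates, i.e.\ verifying that the first-coordinate formulas for $\tilde{\wedge}$, $\tilde{\vee}$ and $\tilde{\imp}$ in Definition~\ref{defKlfi1} (together with the coordinatewise infs and sups of Remark~\ref{TA-lat}) really let us argue purely inside \mA. Once that is observed, the proof is essentially the classical Boolean-valued one and hinges on the fact that $x\in dom(u)$ produces an occurrence of $\termvalue{x\pert x}_1$ on the right-hand side of the bound for $\termvalue{x\approx u}_1$, which is exactly what makes the rank-induction go through.
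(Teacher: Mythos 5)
Your proof is correct and is essentially identical to the paper's: the paper likewise applies the Induction Principle, expands $\termvalue{u\pert u}_1$ as $\bigvee_{y\in dom(u)}((u(y))_1\land\termvalue{y\approx u}_1)$, bounds $\termvalue{y\approx u}_1$ by the single conditional $(u(y))_1\to\termvalue{y\pert y}_1$ drawn from the $dom(u)$-indexed family in the definition of $\approx$, and concludes via $a\wedge(a\to b)\leq b$ and the induction hypothesis. The only differences are notational (your $x$ is the paper's $y$), so there is nothing to add.
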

\begin{proof}
Assume the inductive hypothesis $\termvalue{y \in y}_1=0$ for every $y \in dom(u)$. Note that
$$
\termvalue{u \pert u}_1=  \bigvee_{y \in dom(u)} ((u(y))_1 \land \termvalue{y\approx u}_1).
$$
Let  $y \in dom(u)$. Then
$$
\begin{array}{lll}
(u(y))_1 \land \termvalue{y \approx u}_1 &\leq& \displaystyle (u(y))_1 \land  \bigwedge_{x \in dom(u)} ((u(x))_1 \to \termvalue{x\pert y}_1)\\[2mm]
&\leq& (u(y))_1 \land  ((u(y))_1 \to \termvalue{y\pert y}_1)\\[2mm] 
&\leq&   \termvalue{y\pert y}_1 = 0.
\end{array}
$$
Then $u(y)_1 \land \termvalue{y \approx u}_1=0$ for every $y \in dom(u)$, hence  $\termvalue{u \in u}_1=0$.
\end{proof}

\begin{teo} \label{basicVB}
Let \mA\ be a complete Boolean algebra, and let $u,v,w \in {\bf V}^{\tmA}$. Then:\\[1mm]
(i) $\termvalue{u \approx u}_1=1$.\\[1mm]
(ii) $u(x)_1 \leq \termvalue{x \pert u}_1$, for every $x \in dom(u)$.\\[1mm]
(iii) $\termvalue{u \approx v}_1 = \termvalue{v \approx u}_1$.\\[1mm]
(iv) $\termvalue{u \approx v}_1 \land \termvalue{v \approx w}_1 \leq \termvalue{u \approx w}_1$.\\[1mm]
(v) $\termvalue{u \approx v}_1 \land \termvalue{u \pert w}_1 \leq \termvalue{v \pert w}_1$.\\[1mm]
(vi) $\termvalue{v \approx w}_1 \land \termvalue{u \pert v}_1 \leq \termvalue{u \pert w}_1$.\\[1mm]
(vii) $\termvalue{u \approx v}_1 \land \termvalue{\varphi(u)}_1 \leq \termvalue{\varphi(v)}_1$ for every formula $\varphi(x)$ in  $\mathcal{L}_p(\tmA)$.
\end{teo}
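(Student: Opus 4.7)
All seven claims are proved by applications of the Induction Principle (IP) of Remark~\ref{IP}, exploiting the fact that on the first coordinate the twist operations $\tilde{\wedge},\tilde{\vee},\tilde{\to},\tilde{\sneg}$ coincide with the corresponding Boolean operations on~\mA. This makes the arguments parallel to Bell's Theorem~1.17 for Boolean-valued models (\cite[Theorem~1.17]{bell}), with every computation taking place within \mA\ on first coordinates; the restriction of~(vii) to the pure \ZF-fragment $\mathcal{L}_p(\tmA)$ is essential, since $\tilde{\neg}$ swaps the two coordinates and would otherwise force second coordinates into the argument.

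For~(i) I would apply (IP) to $u$: granting $\termvalue{y\approx y}_1=1$ for every $y\in dom(u)$, pick $x\in dom(u)$ and take the $y=x$ term of the supremum defining $\termvalue{x\pert u}_1$ to get $\termvalue{x\pert u}_1\geq u(x)_1\wedge\termvalue{x\approx x}_1=u(x)_1$, whence $u(x)_1\to\termvalue{x\pert u}_1=1$. Since this holds for every $x\in dom(u)$, both big infima in the definition of $\termvalue{u\approx u}_1$ collapse to~$1$. Claim~(ii) is precisely the same computation, now using~(i) for $x\in dom(u)$; claim~(iii) is immediate by inspection, since $\termvalue{u\approx v}$ is symmetric in $u$ and $v$ up to the order of its two big conjuncts.

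The core of the theorem is~(iv)--(vi), which must be proved \emph{simultaneously} by induction along a well-founded order on triples; the multiset order on $\{rank(u),rank(v),rank(w)\}$ works. Unfolding $\termvalue{u\pert w}_1=\bigvee_{y\in dom(w)}(w(y)_1\wedge\termvalue{y\approx u}_1)$, claim~(v) for $(u,v,w)$ reduces to~(iv) for $(y,u,v)$ with $y\in dom(w)$; unfolding $\termvalue{u\pert v}_1$, claim~(vi) for $(u,v,w)$ reduces to~(v) for $(y,u,w)$ with $y\in dom(v)$; unfolding $\termvalue{u\approx w}_1$, claim~(iv) for $(u,v,w)$ reduces to~(vi) for $(x,v,w)$ with $x\in dom(u)$ and to~(vi) for $(x,v,u)$ with $x\in dom(w)$ (the latter after one appeal to~(iii)). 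In each reduction one component of the triple is replaced by an element of strictly smaller rank, so the multiset strictly decreases and the apparently circular dependencies are resolved.

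Finally,~(vii) follows by induction on the complexity of $\varphi\in\mathcal{L}_p(\tmA)$. The atomic cases $x\pert w$, $w\pert x$, and $x\approx w$ are exactly~(v),~(vi), and a direct consequence of~(iii)--(iv); self-referential atomic formulas such as $x\pert x$ are handled by Lemma~\ref{lemregu}. The cases $\wedge$ and $\vee$ are routine distributivity in~\mA; the cases $\to$ and $\sneg$ require invoking the inductive hypothesis also with $u$ and $v$ swapped (legitimate by~(iii)), plus Boolean contraposition for~$\sneg$; the quantifier cases follow by distributing $\termvalue{u\approx v}_1\wedge(\cdot)$ across the big meet or join using the Remark~\ref{TA-lat} formulas. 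The main obstacle is the careful setup of the well-founded induction underlying~(iv)--(vi); once that framework is fixed, each individual step is a brief Boolean calculation on first coordinates.
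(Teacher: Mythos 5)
Your proposal is correct and follows essentially the same route as the paper, which simply defers items (i)--(vi) to the argument of Bell's Theorem~1.17 (the same mutual induction on ranks you set up explicitly) and proves (vii) by induction on the complexity of $\varphi$ in $\mathcal{L}_p(\tmA)$, using Lemma~\ref{lemregu} for the atomic case $x\pert x$ and items (i)--(vi) for the other atomic cases. Your write-up merely supplies the details (the multiset order on ranks, the swapped inductive hypothesis for $\to$ and $\sneg$) that the paper leaves implicit.
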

\begin{proof}
The proof of items~(i)-(vi) is analogous to the proof of the corresponding items found in~\cite[Theorem~1.17]{bell}. The proof of item~(vii) is easily done by induction on the complexity of $\varphi(x)$ by observing that: the proof when $\varphi$ is atomic uses Lemma~\ref{lemregu}, for $\varphi=(x \pert x)$, and items (i)-(vi) for the other cases. For complex formulas the result follows easily by induction hypothesis.
\end{proof}

\begin{lema} \label{lemaBQ}
Let \mA\ be a complete Boolean algebra. Then, for every formula $\varphi(x)$ in  $\mathcal{L}_p(\tmA)$ and every $u\in {\bf V}^{\tmA}$: $\termvalue{\exists y((u \approx y) \land \varphi(y))}_1 =\termvalue{\varphi(u)}_1$.
\end{lema}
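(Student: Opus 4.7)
The plan is to unpack both sides using Definition~\ref{defint-twist} and then establish the two inequalities $\leq$ and $\geq$ by invoking Theorem~\ref{basicVB}. Expanding the left-hand side, we have
\[
\termvalue{\exists y((u \approx y) \land \varphi(y))}_1 \;=\; \bigvee_{v \in {\bf V}^{\tmA}} \bigl(\termvalue{u \approx v}_1 \,\land\, \termvalue{\varphi(v)}_1\bigr),
\]
using the clauses for $\exists$ and for $\land$ (taking first coordinates).

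For the inequality $\leq$, I would fix an arbitrary $v \in {\bf V}^{\tmA}$ and apply Theorem~\ref{basicVB}(vii) in the form with the roles of $u,v$ exchanged: this gives $\termvalue{v \approx u}_1 \land \termvalue{\varphi(v)}_1 \leq \termvalue{\varphi(u)}_1$. By the symmetry item~(iii) of the same theorem, $\termvalue{v \approx u}_1 = \termvalue{u \approx v}_1$, so each join summand is bounded above by $\termvalue{\varphi(u)}_1$, and hence so is the supremum. Note that this step genuinely requires the hypothesis that $\varphi$ lies in the pure $\ZF$-language $\mathcal{L}_p(\tmA)$, because the Leibniz substitution property (vii) is only stated for such formulas.

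For the inequality $\geq$, I would simply single out the summand with $v = u$. By Theorem~\ref{basicVB}(i), $\termvalue{u \approx u}_1 = 1$, so
\[
\termvalue{u \approx u}_1 \,\land\, \termvalue{\varphi(u)}_1 \;=\; \termvalue{\varphi(u)}_1,
\]
which is $\leq$ the supremum ranging over all $v \in {\bf V}^{\tmA}$. Combining both directions yields the claimed equality.

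There is no real obstacle here; the whole argument reduces to reading off the definition of the interpretation of $\exists$ and $\land$ and applying Theorem~\ref{basicVB}(i), (iii) and (vii). The only point worth flagging is the restriction to $\mathcal{L}_p(\tmA)$ in the hypothesis, which is inherited from the corresponding restriction in Theorem~\ref{basicVB}(vii); outside the pure fragment the substitution property (and therefore this lemma) would not be available on the nose, because the paraconsistent negation $\neg$ need not be congruential with respect to $\approx$ on first coordinates.
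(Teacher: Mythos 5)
Your proof is correct and follows essentially the same route as the paper's: expand the existential as a supremum, bound each summand by $\termvalue{\varphi(u)}_1$ via the Leibniz property (vii) together with symmetry (iii), and obtain the reverse inequality by isolating the summand $v=u$ using (i). Your remark that the restriction to $\mathcal{L}_p(\tmA)$ is inherited from item (vii) is also the right observation (the paper's citation of an item ``(viii)'' is a typo for (vii)).
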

\begin{proof}
It follows from Theorem~\ref{basicVB} items~(i), (iii) and~(viii). Indeed,
$$
\begin{array}{lll}
\termvalue{\exists y((u \approx y) \land \varphi(y))}_1 &=& \displaystyle \bigvee_{x \in dom(u)} (\termvalue{u\approx y}_1 \land \termvalue{\varphi(y)}_1)\\[3mm]
&\leq& \termvalue{\varphi(u)}_1 = \termvalue{u\approx u}_1 \land \termvalue{\varphi(u)}_1\\[3mm] 
&\leq&   \termvalue{\exists y((u \approx y) \land \varphi(y))}_1.
\end{array}
$$
\end{proof}

\begin{Not}
The following notation from~\cite{bell} will be adopted from now on:
$$\exists x \pert u \, \varphi(x) =_{def} \exists x(x \pert u \land \varphi(x));$$
$$\forall x \pert u \, \varphi(x) =_{def} \forall x(x \pert u \to \varphi(x)).$$
\end{Not}

\begin{teo} \label{BQ}
Let \mA\ be a complete Boolean algebra. Then, for every formula $\varphi(x)$ in  $\mathcal{L}_p(\tmA)$ and every $u\in {\bf V}^{\tmA}$:
$$\termvalue{\exists x \pert u \, \varphi(x)}_1 = \bigvee_{x \in dom(u)} ((u(x))_1 \wedge \termvalue{\varphi(x)}_1)$$
and
$$\termvalue{\forall x \pert u \, \varphi(x)}_1 = \bigwedge_{x \in dom(u)} ((u(x))_1 \to \termvalue{\varphi(x)}_1).$$
\end{teo}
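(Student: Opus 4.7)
The plan is to unfold the interpretation clauses of Definition~\ref{defint-twist} and exploit the substitution principle Theorem~\ref{basicVB}(vii), together with the behaviour of the twist operations $\tilde{\land}$ and $\tilde{\imp}$ on first coordinates. From Definition~\ref{defKlfi1} one reads off at once that $\termvalue{\phi \land \psi}_1 = \termvalue{\phi}_1 \land \termvalue{\psi}_1$ and $\termvalue{\phi \to \psi}_1 = \termvalue{\phi}_1 \to \termvalue{\psi}_1$, so first-coordinate computations for propositional connectives reduce to manipulations inside the ambient complete Boolean algebra \mA.

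For the existential identity, first rewrite
$$\termvalue{\exists x(x \pert u \land \varphi(x))}_1 = \bigvee_{y \in {\bf V}^{\tmA}}\bigl(\termvalue{y \pert u}_1 \land \termvalue{\varphi(y)}_1\bigr),$$
then expand $\termvalue{y \pert u}_1 = \bigvee_{x \in dom(u)}((u(x))_1 \land \termvalue{x \approx y}_1)$ and interchange the two joins, which is legal since \mA\ is complete. The resulting inner join over $y$ will then be shown to equal $(u(x))_1 \land \termvalue{\varphi(x)}_1$: the upper bound follows by applying Theorem~\ref{basicVB}(iii) and~(vii) to replace $\termvalue{x \approx y}_1 \land \termvalue{\varphi(y)}_1$ by $\termvalue{\varphi(x)}_1$, while the lower bound follows by instantiating $y := x$ and using $\termvalue{x \approx x}_1 = 1$ from Theorem~\ref{basicVB}(i).

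For the universal identity the strategy is dual: after writing
$$\termvalue{\forall x(x \pert u \to \varphi(x))}_1 = \bigwedge_{y \in {\bf V}^{\tmA}}\bigl(\termvalue{y \pert u}_1 \to \termvalue{\varphi(y)}_1\bigr)$$
and replacing $\termvalue{y \pert u}_1$ by the corresponding join, I will use the Boolean identity $(\bigvee_i a_i) \to b = \bigwedge_i (a_i \to b)$ to pull the join out of the antecedent, swap the two meets, and curry via $a \land b \to c = a \to (b \to c)$ so as to extract the factor $(u(x))_1$. What remains is the identity $\bigwedge_{y \in {\bf V}^{\tmA}}(\termvalue{x \approx y}_1 \to \termvalue{\varphi(y)}_1) = \termvalue{\varphi(x)}_1$; one direction is obtained by taking $y := x$, and the other is a direct reformulation via the residuation adjunction of Theorem~\ref{basicVB}(vii), which yields $\termvalue{x \approx y}_1 \land \termvalue{\varphi(x)}_1 \leq \termvalue{\varphi(y)}_1$.

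I do not anticipate any serious obstacle in this argument; everything is essentially bookkeeping once the first-coordinate reductions are in place. The one point that demands care is that the whole proof relies on Theorem~\ref{basicVB}(vii), which is guaranteed only for formulas of the pure $\ZF$-language $\mathcal{L}_p(\tmA)$; that restriction is precisely the hypothesis of the present theorem. Admitting the paraconsistent negation $\wneg$ would break the substitution property and hence the bounded-quantifier identities would not survive in that extended fragment.
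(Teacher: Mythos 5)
Your proof is correct and follows essentially the same route as the paper, which simply defers to Bell's Corollary~1.18 together with Theorem~\ref{basicVB} and Lemma~\ref{lemaBQ}: unfold the first-coordinate clauses, interchange the class-indexed joins/meets using completeness and infinite distributivity, and collapse the inner join/meet over $y$ via reflexivity, symmetry and the restricted Leibniz property of Theorem~\ref{basicVB}. The only cosmetic difference is that you re-derive the content of Lemma~\ref{lemaBQ} inline (the identity $\bigvee_{y}(\termvalue{x\approx y}_1 \land \termvalue{\varphi(y)}_1)=\termvalue{\varphi(x)}_1$ and its dual) instead of citing it, and you correctly flag that the whole argument is confined to $\mathcal{L}_p(\tmA)$ because Theorem~\ref{basicVB}(vii) fails once $\wneg$ is admitted.
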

\begin{proof}
The proof is similar to that for~\cite[Corollary~1.18]{bell}, taking into account Theorem~\ref{basicVB} and Lemma~\ref{lemaBQ}
\end{proof}

\

\noindent
Recall that a complete Boolean algebra \mA' is a complete subalgebra of the  complete Boolean algebra \mA\ provided that \mA' is a subalgebra of  \mA\ and $\bigvee_{\mA'} X=\bigvee_\mA X$ and  $\bigwedge_{\mA'} X=\bigwedge_\mA X$ for every $X \subseteq |\mA'|$. Analogously, we say that a twist-structure \tmpA\ is a {\em complete subalgebra} of the twist-structure \tmA\ if \tmpA\ is a subalgebra of  \tmA\ and $\bigvee_{\tmpA} X = \bigvee_{\tmA} X$ and  $\bigwedge_{\tmpA} X = \bigwedge_{\tmA} X$ for every $X \subseteq |\tmpA|$, recalling Remark~\ref{TA-lat}. 

\begin{prop}
If \mA' is a complete subalgebra of \mA\ then \tmpA\ is a complete subalgebra of \tmA.
\end{prop}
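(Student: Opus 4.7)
The plan is to split the claim into two pieces: first, verify that $\tmpA$ is a subalgebra of $\tmA$ in the algebraic sense; second, show that arbitrary joins and meets computed in $\tmA$ of subsets of $|\tmpA|$ already lie in $|\tmpA|$ and agree with the joins and meets computed in $\tmpA$. Because the operations of a twist structure are defined coordinatewise from the Boolean operations of the underlying algebra (Definition~\ref{defKlfi1}), everything should reduce to the corresponding properties of $\mA'$ as a complete subalgebra of $\mA$.

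For the subalgebra part, I would first observe that $T_{\mA'} \subseteq T_{\mA}$, since $A' \subseteq A$ and the equation $z_1 \vee z_2 = 1$ defining $T_{\mA'}$ is the same equation used to define $T_\mA$ (the meet and join involved are the same in $\mA'$ and $\mA$ because $\mA'$ is a subalgebra). Then, for each operation $\tilde{\#}\in\{\tilde{\wedge},\tilde{\vee},\tilde{\imp},\tilde{\sneg},\tilde{\wneg}\}$, the output on a pair (or pairs) from $T_{\mA'}$ is built from $\wedge,\vee,\imp,\sneg$ applied to components in $A'$; since $A'$ is closed under these, the output lies in $T_{\mA'}$ and the operation computed in $\tmA$ coincides with the one computed in $\tmpA$. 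This shows $\tmpA$ is a subalgebra of $\tmA$ in the signature $\Sigma$.

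For completeness, I would invoke Remark~\ref{TA-lat}, which expresses arbitrary joins and meets in a twist structure coordinatewise: for any family $\{z_i\}_{i \in I}$ in $T_{\mA'}$, the meet in $\tmA$ is $\bigl(\bigwedge_\mA (z_i)_1,\; \bigvee_\mA (z_i)_2\bigr)$ and the join is $\bigl(\bigvee_\mA (z_i)_1,\; \bigwedge_\mA (z_i)_2\bigr)$, and analogously in $\tmpA$ with $\mA$ replaced by $\mA'$. The hypothesis that $\mA'$ is a complete subalgebra of $\mA$ gives $\bigwedge_{\mA'} X = \bigwedge_\mA X$ and $\bigvee_{\mA'} X = \bigvee_\mA X$ for every $X \subseteq |\mA'|$; applying this with $X = \{(z_i)_1\}$ and with $X = \{(z_i)_2\}$ yields that the coordinatewise expressions computed in $\mA$ coincide with those computed in $\mA'$, so $\bigwedge_{\tmA} \{z_i\} = \bigwedge_{\tmpA} \{z_i\}$ and similarly for joins.

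There is essentially no main obstacle: the argument is a mechanical transfer from $\mA'\leq \mA$ to $\tmpA \leq \tmA$ via the coordinatewise definitions. The only technical point worth flagging is that these coordinatewise expressions really do define an element of $T_\mA$ (respectively $T_{\mA'}$), that is, that their two coordinates have join equal to $1$; this was already checked implicitly in Remark~\ref{TA-lat}, and it follows from $(z_i)_1 \vee (z_i)_2 = 1$ by a short inequality chain, so nothing new needs to be proved here.
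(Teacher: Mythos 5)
Your proof is correct and follows exactly the route the paper takes: its entire proof is the one-line remark that the claim ``follows from Definition~\ref{defKlfi1} and Remark~\ref{TA-lat},'' and your write-up simply spells out that coordinatewise reduction (closure under the twist operations from the subalgebra property, and agreement of arbitrary meets and joins from the complete-subalgebra hypothesis). Nothing is missing; you have just made explicit what the paper leaves implicit.
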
 
\begin{proof}
If follows from Definition~\ref{defKlfi1}  and Remark~\ref{TA-lat}.
\end{proof}

\begin{teo} \label{th1.20}
Let \mA' be a complete subalgebra of the  complete Boolean algebra \mA. Then:\\[1mm]
(i)  ${\bf V}^{\tmpA} \subseteq {\bf V}^{\tmA}$.\\[1mm]
(ii) for every $u,v \in {\bf V}^{\tmpA}$:  $\termvalue{u \pert w}^{{\bf V}^{\tmpA}} = \termvalue{u \pert w}^{{\bf V}^{\tmA}}$, and $\termvalue{u \approx w}^{{\bf V}^{\tmpA}} = \termvalue{u \approx w}^{{\bf V}^{\tmA}}$.
\end{teo}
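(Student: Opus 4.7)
The plan is to handle (i) by a straightforward transfinite induction on $\alpha$, and then to prove (ii) by a simultaneous well-founded induction on the two arguments, leveraging the proposition immediately preceding this theorem (which tells us that $\tmpA$ is a complete subalgebra of $\tmA$).

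For part (i), I would argue by transfinite induction on $\alpha$ that ${\bf V}^{\tmpA}_\alpha \subseteq {\bf V}^{\tmA}_\alpha$. If $x \in {\bf V}^{\tmpA}_\alpha$, then $x$ is a function with $ran(x) \subseteq \tmpA$ and $dom(x) \subseteq {\bf V}^{\tmpA}_\xi$ for some $\xi < \alpha$. Since $\mA'$ is a subalgebra of $\mA$, Definition~\ref{twdom} gives $\tmpA \subseteq \tmA$, so $ran(x) \subseteq \tmA$; and the inductive hypothesis at stage $\xi$ supplies $dom(x) \subseteq {\bf V}^{\tmA}_\xi$. Hence $x \in {\bf V}^{\tmA}_\alpha$, and taking unions over all ordinals delivers the inclusion ${\bf V}^{\tmpA} \subseteq {\bf V}^{\tmA}$.

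For part (ii), I would prove both equalities simultaneously by induction on $\max\{rank(u),rank(w)\}$ (one may equivalently invoke IP applied to the conjunction of both statements, ordered by the pair of ranks). Consider first $\termvalue{u \pert w}^{{\bf V}^{\tmpA}}$. By Definition~\ref{defint-twist} it equals $\bigvee_{x \in dom(w)} (w(x) \, \tilde{\land} \, \termvalue{x \approx u}^{{\bf V}^{\tmpA}})$. For every $x \in dom(w)$ the pair $(x,u)$ has strictly smaller max-rank than $(u,w)$, so the induction hypothesis gives $\termvalue{x \approx u}^{{\bf V}^{\tmpA}} = \termvalue{x \approx u}^{{\bf V}^{\tmA}}$. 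Since $w(x) \in \tmpA \subseteq \tmA$ and (by the preceding proposition) $\tmpA$ is a complete subalgebra of $\tmA$, the operation $\tilde{\land}$ and the join $\bigvee$ yield the same element when computed in $\tmpA$ or in $\tmA$; thus $\termvalue{u \pert w}^{{\bf V}^{\tmpA}} = \termvalue{u \pert w}^{{\bf V}^{\tmA}}$. The clause for $\termvalue{u \approx w}$ is treated analogously: it is built from $\tilde{\to}$, $\tilde{\land}$, $\bigvee$ and $\bigwedge$ applied to $u(x), w(x) \in \tmpA$ and to values $\termvalue{x \pert w}$ (for $x \in dom(u)$) and $\termvalue{x \pert u}$ (for $x \in dom(w)$), all of which fall under the induction hypothesis since the pair ranks decrease.

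The main obstacle is organising the simultaneous recursion cleanly, since the clauses for $\pert$ and $\approx$ are mutually dependent: evaluating $\termvalue{u \pert w}$ calls on lower-rank values of $\approx$, while evaluating $\termvalue{u \approx w}$ calls on lower-rank values of $\pert$. The standard device (as in~\cite[Theorem~1.20]{bell}) is to carry both statements as a single property indexed by the pair $(u,w)$ under the well-founded relation induced by $\max\{rank(u),rank(w)\}$. Once this is laid out, the verification reduces to observing that all the Boolean and twist operations appearing in Definition~\ref{defint-twist}, together with the infinitary joins and meets, are absolute between $\tmpA$ and $\tmA$; this absoluteness is precisely what the completeness of the subalgebra inclusion buys us.
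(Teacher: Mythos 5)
The paper states Theorem~\ref{th1.20} without any proof (it is the twist analogue of~\cite[Theorem~1.20]{bell}, and the surrounding results are all justified by ``adapt the proof in~\cite{bell}''), so your proposal is supplying detail the authors omit rather than diverging from an explicit argument. Part~(i) is correct as written. The substance of part~(ii) is also the right idea: the mutual recursion for $\pert$ and $\approx$ only ever applies $\tilde{\wedge}$, $\tilde{\to}$ and the infinitary $\bigvee$, $\bigwedge$ to elements of the domain of \tmpA, and these are absolute between \tmpA\ and \tmA\ precisely because the subalgebra is \emph{complete} (the preceding proposition), which is the whole point of the theorem.

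There is, however, one step that fails as literally stated: the measure $\max\{rank(u),rank(w)\}$ does not strictly decrease along the recursive calls. In the clause $\termvalue{u \pert w}=\bigvee_{x \in dom(w)}\big(w(x)\,\tilde{\wedge}\,\termvalue{x \approx u}\big)$ the call is to the pair $(x,u)$ with $rank(x)<rank(w)$ but $rank(u)$ unchanged; if $rank(u)\geq rank(w)$ then $\max\{rank(x),rank(u)\}=rank(u)=\max\{rank(u),rank(w)\}$, so your claim that the max-rank is ``strictly smaller'' is false in that case (and similarly for the calls made by the $\approx$ clause). The standard repair is to induct on the natural (Hessenberg) sum $rank(u)\oplus rank(w)$, which is strictly monotone in each coordinate, or equivalently to use the two-variable induction principle from~\cite{bell}: to prove $\Phi(u,w)$ for all pairs it suffices to derive it from $\Phi(x,w)$ for $x\in dom(u)$, $\Phi(u,y)$ for $y\in dom(w)$, and $\Phi(x,y)$ for $x\in dom(u)$, $y\in dom(w)$ --- which is exactly the shape of the dependencies in Definition~\ref{defint-twist}. (The lexicographic order on the pair $(\max,\min)$ of the two ranks also works.) With that adjustment your argument is complete and matches what the paper intends.
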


\begin{coro} \label{valequal}
Suppose that \mA' is a complete subalgebra of \mA. Then, for any restricted formula $\varphi(x_1,\ldots,x_n)$ in $\mathcal{L}_p$ (recall Definition~\ref{def-langs}) and for every $u_1,\ldots,u_n \in \tmpA$: $\termvalue{\varphi(u_1,\ldots,u_n)}^{{\bf V}^{\tmpA}} = \termvalue{\varphi(u_1,\ldots,u_n)}^{{\bf V}^{\tmA}}$.
\end{coro}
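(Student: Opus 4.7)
Our plan is to proceed by induction on the complexity of the restricted formula $\varphi$, mirroring Bell's absoluteness argument for restricted formulas in classical Boolean-valued models (cf.\ \cite[Corollary~1.21]{bell}), but carefully tracking both coordinates of the twist truth-values. In the atomic base case (where $\varphi(u_1,\ldots,u_n)$ is of the form $u_i \pert u_j$ or $u_i \approx u_j$), the desired pair-equality is precisely what Theorem~\ref{th1.20}(ii) delivers. For the propositional step ($\varphi = \psi \# \chi$ with $\# \in \{\wedge, \vee, \to\}$, or $\varphi = \sneg\psi$), the twist operations $\tilde{\#}$ and $\tilde{\sneg}$ are defined coordinatewise from Boolean operations on $\mA$ that coincide with those on $\mA'$, so the inductive hypothesis propagates immediately; note that $\mathcal{L}_p$ excludes $\wneg$, which is precisely the reason for restricting to $\mathcal{L}_p$.

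For the bounded-quantifier step, say $\varphi = \exists x \pert y \, \psi(x)$ (the universal case $\forall x \pert y \, \psi(x)$ being dual), the key observation is that $y \in {\bf V}^{\tmpA}$ forces $dom(y) \subseteq {\bf V}^{\tmpA}$, so the bounded quantifier effectively ranges over a set common to both models. Theorem~\ref{BQ} then reduces the first coordinate of $\termvalue{\exists x \pert y \, \psi(x)}$ to $\bigvee_{x \in dom(y)} ((y(x))_1 \wedge \termvalue{\psi(x)}_1)$; applying the inductive hypothesis at each $x \in dom(y)$, together with the fact that $\mA'$ is a \emph{complete} subalgebra of $\mA$, we conclude that this supremum takes the same value when computed in either model.

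The main obstacle will be the second-coordinate analysis of this bounded-quantifier step. Theorem~\ref{BQ} as stated handles only the first coordinate, and the natural Leibniz-style reduction of the second coordinate does not immediately follow from Theorem~\ref{basicVB}(vii), whose substitution property is asserted only for the first component. To close the induction we would first need an auxiliary lemma showing that $\termvalue{\exists x \pert y \, \psi(x)}_2$ also simplifies to an expression indexed by $dom(y)$---essentially a second-coordinate version of Theorem~\ref{BQ}---which in turn requires extending the Leibniz property of Theorem~\ref{basicVB}(vii) to the second coordinate for formulas in $\mathcal{L}_p$. Once this auxiliary fact is in place, the same sup/inf agreement argument (using completeness of $\mA'$ in $\mA$) closes the induction.
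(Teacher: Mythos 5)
Your overall strategy is the same as the paper's: its proof is a one-line appeal to the argument of Bell's Corollary~1.21, i.e.\ induction on restricted formulas with Theorem~\ref{th1.20}(ii) supplying the atomic case, coordinatewise propagation through $\wedge,\vee,\to,\sneg$, and the bounded-quantification lemma handling the quantifier step. Your atomic and propositional steps are fine, and you are right that the whole content of the corollary lies in the bounded-quantifier case and, specifically, in its second coordinate (the first coordinate is settled by Theorem~\ref{BQ} together with the completeness of $\mA'$ in $\mA$).

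That second-coordinate step, however, is exactly where your proof stops: you name the auxiliary lemma you would need (a second-coordinate analogue of Theorem~\ref{BQ}, resting on a second-coordinate Leibniz property) but neither state it precisely nor prove it, and its most obvious formulation is false. The naive second-coordinate Leibniz property $\termvalue{u\approx v}_1\wedge\termvalue{\varphi(u)}_2\leq\termvalue{\varphi(v)}_2$ fails already for the $\neg$-free atomic formula $\varphi(x):=(w\pert x)$: taking $w=\{\langle\varnothing,{\bf 1}\rangle\}$, $u=\{\langle w,{\bf \frac{1}{2}}\rangle\}$, $v=\{\langle w,{\bf 1}\rangle\}$ (the witnesses of Theorem~\ref{fail-leibniz}) one gets $\termvalue{u\approx v}_1=1$ and $\termvalue{w\pert u}_2=1$ but $\termvalue{w\pert v}_2=0$. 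What is actually needed for $\neg$-free $\varphi$ is the weaker substitution property $\sneg\termvalue{u\approx v}_2\wedge\termvalue{\varphi(u)}_2\leq\termvalue{\varphi(v)}_2$, equivalently $\termvalue{\varphi(u)}_2\leq\termvalue{u\approx v}_2\vee\termvalue{\varphi(v)}_2$, established by a simultaneous induction alongside Theorem~\ref{basicVB}(vii); this is precisely the form that makes $\termvalue{\exists x\pert y\,\psi(x)}_2$ collapse to an infimum indexed by $dom(y)$, because $\termvalue{t\pert y}_2$ already carries the terms $\termvalue{x\approx t}_2$ disjunctively. Until that lemma is formulated in this corrected form and proved, your induction is not closed. (To be fair, the paper does not carry this out either---it delegates everything to the analogy with Bell---but your write-up makes the missing step visible without supplying it.)
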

\begin{proof}
The proof is analogous to that for~\cite[Corollary~1.21]{bell}.
\end{proof}

\begin{obs} \label{remVA2}
Recall from Remark~\ref{twistA2} that $\mathcal{T}_{\mathbb{A}_2}$, the twist structure for \lptz\ defined over the two-element Boolean algebra $\mathbb{A}_2$, coincides (up to names) with the three-valued algebra \aptz\ underlying the matrix $\matM_{PT0}$, where {\bf 1}, ${\bf \frac{1}{2}}$ and {\bf 0} are identified with $(1,0)$, $(1,1)$ and $(0,1)$, respectively. Hence, the twist-valued structure ${\bf V}^{\mathcal{T}_{\mathbb{A}_2}}$ will be denoted by  ${\bf V}^{\aptz}$  Since  $\mathbb{A}_2$ is a complete subalgebra of any complete Boolean algebra \mA\ then ${\bf V}^{\aptz}$ is  a complete subalgebra of ${\bf V}^{\tmA}$, for any \tmA. By Theorem~\ref{th1.20}, $\termvalue{u \pert v}^{{\bf V}^{\aptz}} = \termvalue{u \pert v}^{{\bf V}^{\tmA}}$ and $\termvalue{u \approx v}^{{\bf V}^{\aptz}} = \termvalue{u \approx v}^{{\bf V}^{\tmA}}$ for every $u,v \in {\bf V}^{\aptz}$ and every \tmA. As happens with the Boolean-valued model ${\bf V}^{\mathbb{A}_2}$, the twist-valued model ${\bf V}^{\aptz}$ is, in some sense, isomorphic to the standard universe ${\bf V}$, as it will be shown in Theorem~\ref{th1.23} below.
\end{obs}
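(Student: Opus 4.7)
The remark asserts three things that need justification beyond the purely notational identification ${\bf V}^{\mathcal{T}_{\mathbb{A}_2}} = {\bf V}^{\aptz}$ (already supplied by Remark~\ref{twistA2}): first, that $\mathcal{T}_{\mathbb{A}_2}$ is a complete subalgebra of every twist structure \tmA; second, that ${\bf V}^{\aptz}$ therefore embeds, value-preservingly on atomic formulas, into every ${\bf V}^{\tmA}$; and third, that ${\bf V}^{\aptz}$ is in turn isomorphic in an appropriate sense to the classical cumulative hierarchy ${\bf V}$. The first two are really applications of results already established; the third is announced for Theorem~\ref{th1.23} and is the only genuinely new piece of work.

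For the first point my plan is to verify, as a preliminary, that the two-element $\mathbb{A}_2$ is a complete subalgebra of any complete Boolean algebra \mA. The inclusion $\{0,1\} \subseteq |\mA|$ sends the top and bottom of $\mathbb{A}_2$ to the top and bottom of \mA, and since Boolean operations on $\{0,1\}$ are determined by the top/bottom pattern, they agree with the restriction of the operations of \mA. For completeness, any $X \subseteq \{0,1\}$ has $\bigvee X, \bigwedge X \in \{0,1\}$, and these values coincide with $\bigvee_\mA X$ and $\bigwedge_\mA X$ since $0$ and $1$ are absolute (with the usual convention for $X = \emptyset$). Once this is in hand, the Proposition immediately preceding Theorem~\ref{th1.20} lifts complete-subalgebra-hood from Boolean algebras to their twist structures, giving $\mathcal{T}_{\mathbb{A}_2}$ as a complete subalgebra of \tmA.

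The second point is then a direct invocation of Theorem~\ref{th1.20}: item~(i) yields ${\bf V}^{\aptz} \subseteq {\bf V}^{\tmA}$ as classes, and item~(ii) gives the equalities $\termvalue{u \pert v}^{{\bf V}^{\aptz}} = \termvalue{u \pert v}^{{\bf V}^{\tmA}}$ and $\termvalue{u \approx v}^{{\bf V}^{\aptz}} = \termvalue{u \approx v}^{{\bf V}^{\tmA}}$ for $u,v \in {\bf V}^{\aptz}$. No new argument is required beyond chaining these results.

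The third claim is the one genuinely requiring work, and it is correctly deferred. The plan I would follow for Theorem~\ref{th1.23} is to define, by transfinite recursion on rank, a ``check'' map $x \mapsto \check{x}$ from ${\bf V}$ into ${\bf V}^{\aptz}$, sending each set $x$ to the function with $\mathrm{dom}(\check{x}) = \{\check{y} : y \in x\}$ and constant value ${\bf 1} = (1,0)$, and then to show by $\in$-induction that $\check{\cdot}$ faithfully preserves and reflects $\in$ and $=$. The main obstacle will be to verify that the second coordinate of $\termvalue{\check{x} \pert \check{y}}$ and $\termvalue{\check{x} \approx \check{y}}$ stays at $0$ throughout, so that every check-element behaves as fully classical (never getting the middle value ${\bf \frac{1}{2}}$), after which the desired isomorphism between the range of $\check{\cdot}$ and the standard universe ${\bf V}$ follows by a routine simultaneous induction on rank.
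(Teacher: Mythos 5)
Your proposal is correct and follows exactly the paper's route: the Remark is established by chaining Remark~\ref{twistA2}, the observation that $\mathbb{A}_2$ is a complete subalgebra of every complete Boolean algebra, the Proposition lifting complete subalgebras to twist structures, and Theorem~\ref{th1.20}(i)--(ii), with the isomorphism claim deferred to Theorem~\ref{th1.23} via the same hat/check map $x \mapsto \{\langle \hat{y},{\bf 1}\rangle : y \in x\}$. One small correction to your sketch of the deferred part: it is not true that the second coordinate of $\termvalue{\check{x} \pert \check{y}}$ stays at $0$ (when $x \notin y$ that value is ${\bf 0}=(0,1)$); what the paper's Lemma~\ref{val01} actually establishes, and what you need, is that these values always lie in $\{{\bf 0},{\bf 1}\}$, i.e.\ the middle value $(1,1)$ never occurs.
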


\begin{defi}
Define by transfinite recursion on the well-founded relation $y \in x$ the following, for each $x \in {\bf V}$: $\hat{x} =_{def} \{\langle  \hat{y},{\bf 1}\rangle \ : \ y \in x\}$.
\end{defi}

\

\noindent
It is clear that $\hat{x} \in {\bf V}^{\aptz}$ and so $\hat{x} \in {\bf V}^{\tmA}$ for every \tmA.  Hence,  if $\varphi(v_1,\ldots,v_n)$ is a restricted formula in $\mathcal{L}_p$ and $x_1,\ldots,x_n \in {\bf V}$ then $\termvalue{\varphi(\widehat{x_1},\ldots,\widehat{x_n})}^{{\bf V}^{\aptz}}=\termvalue{\varphi(\widehat{x_1},\ldots,\widehat{x_n})}^{{\bf V}^{\tmA}}$ for every \tmA, by Corollary~\ref{valequal}.

\begin{lema} \label{val01}
Let $\varphi(v_1,\ldots,v_n)$ be a formula in $\mathcal{L}_p$, and let $x_1,\ldots,x_n \in {\bf V}$. Then, $\termvalue{\varphi(\widehat{x_1},\ldots,\widehat{x_n})}^{{\bf V}^{\aptz}} \in \{{\bf 0},{\bf 1}\}$.
\end{lema}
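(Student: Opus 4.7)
My plan is by induction on the complexity of $\varphi$. For the atomic cases $\widehat{x} \pert \widehat{y}$ and $\widehat{x} \approx \widehat{y}$, I would nest a simultaneous transfinite induction on $(rank(x), rank(y))$. The expanded clauses in Definition~\ref{defint-twist} simplify drastically because $\widehat{z}(\widehat{w}) = (1, 0)$ for every $w \in z$: every first coordinate becomes a $\bigvee$ or $\bigwedge$ of terms of the form $1 \wedge \termvalue{\cdot}_1$ or $1 \to \termvalue{\cdot}_1$, while every second coordinate collapses to a combination involving only $0$'s and $\termvalue{\cdot}_2$'s. By the inductive hypothesis the inner atomic values are in $\{{\bf 0}, {\bf 1}\}$, and hence the two coordinates of the resulting value land in $\{0, 1\}$ with complementary values, placing the result in $\{{\bf 0}, {\bf 1}\}$.

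For the propositional connectives a direct finite check using Definition~\ref{defKlfi1} shows that $\{{\bf 0}, {\bf 1}\}$ is closed in $\mathcal{T}_{\mathbb{A}_2}$ under $\tilde{\wedge}, \tilde{\vee}, \tilde{\imp}$, and $\tilde{\sneg}$. The exclusion of $\wneg$ in $\mathcal{L}_p$ is what keeps us classical here: on $\{{\bf 0}, {\bf 1}\}$ the strong negation $\tilde{\sneg}$ agrees with Boolean complement. Combined with the induction hypothesis, this case closes.

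The main obstacle is the quantifier case. The clause $\termvalue{\forall y\, \psi(y, \widehat{x_1}, \ldots, \widehat{x_n})}^{{\bf V}^{\aptz}} = \bigwedge_{u \in {\bf V}^{\aptz}} \termvalue{\psi(u, \widehat{x_1}, \ldots, \widehat{x_n})}^{{\bf V}^{\aptz}}$ ranges over \emph{all} $u \in {\bf V}^{\aptz}$, including genuinely inconsistent elements (those taking value $(1,1)$ somewhere), for which the induction hypothesis gives no direct information. My strategy is to produce, by transfinite recursion on $u$, a standard counterpart $\widehat{y_u}$ with $y_u \in {\bf V}$---roughly $y_u = \{y_w : w \in dom(u) \text{ and } (u(w))_1 = 1\}$---and then, using Theorem~\ref{basicVB}(vii) together with the classicity of the outer parameters $\widehat{x_i}$, argue that the infimum (resp. supremum) effectively restricts to hat-elements. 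Once the index set has been replaced by $\{\widehat{y} : y \in {\bf V}\}$, the IH plus closure of $\{{\bf 0}, {\bf 1}\}$ under arbitrary $\bigwedge, \bigvee$ in $\mathcal{T}_{\mathbb{A}_2}$ finishes the proof.

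The delicate point is precisely this final reduction: since inconsistent $u$'s (with $\termvalue{u \approx u} = {\bf \frac{1}{2}}$) admit no $\approx$-equivalent hat counterpart, Theorem~\ref{basicVB}(vii) cannot be invoked in the naive way to swap $u$ for $\widehat{y_u}$. The argument must therefore show that any $(1,1)$-contribution from such $u$ is either absorbed by the outer $\bigwedge/\bigvee$ or neutralized by the classical context of the $\widehat{x_i}$; this is the step where the classicity of the parameters is critically used.
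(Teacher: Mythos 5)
Your skeleton is exactly the paper's (the paper's entire proof is the one sentence ``by induction on the complexity of $\varphi$''), and your atomic and propositional cases are sound: since $\hat{x}(\hat{z})={\bf 1}$ and ${\bf 1}\,\tilde{\to}\,a=a$, a simultaneous transfinite induction puts $\termvalue{\hat{x}\pert\hat{y}}$ and $\termvalue{\hat{x}\approx\hat{y}}$ in $\{{\bf 0},{\bf 1}\}$, and $\{{\bf 0},{\bf 1}\}$ is closed under $\tilde{\wedge},\tilde{\vee},\tilde{\to},\tilde{\sneg}$ and under arbitrary infima and suprema. But the step you flag as delicate is not merely delicate: it cannot be closed, because the statement fails at the quantifier case. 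Take $y=\{\emptyset\}$ and $u=\{\langle\varnothing,{\bf \frac{1}{2}}\rangle\}\in{\bf V}^{\aptz}$. Then $\termvalue{\varnothing\pert u}={\bf \frac{1}{2}}$ and $\termvalue{\varnothing\pert\hat{y}}={\bf 1}$, so $\termvalue{u\approx\hat{y}}=({\bf \frac{1}{2}}\,\tilde{\to}\,{\bf 1})\,\tilde{\wedge}\,({\bf 1}\,\tilde{\to}\,{\bf \frac{1}{2}})={\bf 1}\,\tilde{\wedge}\,{\bf \frac{1}{2}}={\bf \frac{1}{2}}$. For $\psi(v):=(v\approx\hat{y})\lor\sneg(v\approx\hat{y})$, a formula of $\mathcal{L}_p$ with the single parameter $\hat{y}$, one computes $\termvalue{\psi(v')}=\big(1,\,(\termvalue{v'\approx\hat{y}})_1\wedge(\termvalue{v'\approx\hat{y}})_2\big)$, which is ${\bf 1}$ unless $\termvalue{v'\approx\hat{y}}=(1,1)$, in which case it is ${\bf \frac{1}{2}}$. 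Hence $\termvalue{\forall v\,\psi(v)}$ has first coordinate $1$ but second coordinate $\bigvee_{v'}(\termvalue{\psi(v')})_2=1$ (witnessed by $v'=u$), i.e.\ $\termvalue{\forall v\,\psi(v)}={\bf \frac{1}{2}}\notin\{{\bf 0},{\bf 1}\}$.

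The reason your intended repair cannot work is the one you half-identify: swapping $u$ for a hat counterpart via Theorem~\ref{th1.23}(iv) and Theorem~\ref{basicVB}(vii) controls only first coordinates, whereas the damage occurs entirely in the second coordinate of $\bigwedge$, which is a \emph{supremum} and therefore remembers the inconsistent witness; the classicality of the parameters $\hat{x}_i$ does nothing to neutralize it. What does survive --- and is all the paper actually uses downstream --- is (a) the quantifier-free case, in particular $\termvalue{\hat{x}\pert\hat{y}},\termvalue{\hat{x}\approx\hat{y}}\in\{{\bf 0},{\bf 1}\}$, which is what Theorem~\ref{th1.23}(ii) needs, and (b) the first-coordinate statement $(\termvalue{\varphi(\widehat{x_1},\ldots,\widehat{x_n})})_1\in\{0,1\}$, trivially true over $\mathbb{A}_2$ and sufficient for Theorem~\ref{th1.23}(v), since validity depends only on first coordinates. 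So the honest conclusion of your analysis is that the lemma must be weakened to one of these forms; note also that the bounded variant $\forall v(v\pert\hat{x}\to v\pert\hat{x})$ with $x=\{\{\emptyset\}\}$ is a \emph{restricted} formula evaluating to ${\bf \frac{1}{2}}$, so Corollary~\ref{corval01} needs the same repair.
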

\begin{proof}
The result is proven by induction on the complexity of $\varphi$.
\end{proof}

\begin{coro} \label{corval01}
Let $\varphi(v_1,\ldots,v_n)$ be a restricted formula in $\mathcal{L}_p$, and let $x_1,\ldots,x_n \in {\bf V}$. Then, $\termvalue{\varphi(\widehat{x_1},\ldots,\widehat{x_n})}^{{\bf V}^{\tmA}} \in \{{\bf 0},{\bf 1}\}$ for every \mA.
\end{coro}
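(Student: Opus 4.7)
The plan is to combine Lemma~\ref{val01} with Corollary~\ref{valequal}, using the key fact established in Remark~\ref{remVA2} that $\mathbb{A}_2$ is a complete subalgebra of every complete Boolean algebra $\mA$, and hence $\mathcal{T}_{\mathbb{A}_2}=\aptz$ is a complete subalgebra of every $\tmA$. Since the names $\widehat{x}$ are built recursively from $\mathbf{1}=(1,0)\in\mathcal{T}_{\mathbb{A}_2}$, every $\widehat{x_i}$ lies in ${\bf V}^{\aptz}$, so the hypotheses of Corollary~\ref{valequal} are met.

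Concretely, I would argue as follows. Given a restricted formula $\varphi(v_1,\ldots,v_n)$ in $\mathcal{L}_p$ and $x_1,\ldots,x_n\in\mathbf{V}$, first invoke Corollary~\ref{valequal} with $\mA'=\mathbb{A}_2$ and the tuple of names $\widehat{x_1},\ldots,\widehat{x_n}\in{\bf V}^{\aptz}$ to obtain
\[
\termvalue{\varphi(\widehat{x_1},\ldots,\widehat{x_n})}^{{\bf V}^{\aptz}}=\termvalue{\varphi(\widehat{x_1},\ldots,\widehat{x_n})}^{{\bf V}^{\tmA}}.
\]
Then apply Lemma~\ref{val01} to conclude that the left-hand side lies in $\{\mathbf{0},\mathbf{1}\}$, which immediately transfers to the right-hand side, yielding the desired conclusion for an arbitrary complete Boolean algebra $\mA$.

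There is essentially no obstacle here: the corollary is a routine two-step deduction that packages Lemma~\ref{val01} (restriction of restricted formulas on canonical names collapses truth-values to $\{\mathbf{0},\mathbf{1}\}$ in the ground twist structure) with the preservation of truth-values for restricted formulas across complete twist subalgebras (Corollary~\ref{valequal}). The only point meriting an explicit mention is why the restrictedness hypothesis is indispensable: unrestricted quantifiers would range over ${\bf V}^{\tmA}$ rather than ${\bf V}^{\aptz}$, so the preservation step would fail, and that is precisely the content bundled into Corollary~\ref{valequal} that makes the present statement work.
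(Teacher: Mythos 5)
Your proposal is correct and matches the paper's own (one-line) proof exactly: combine Corollary~\ref{valequal} with $\mA'=\mathbb{A}_2$ (using that each $\widehat{x_i}\in{\bf V}^{\aptz}$, as noted just before Lemma~\ref{val01}) to transfer the value to ${\bf V}^{\aptz}$, then apply Lemma~\ref{val01}. Your added remark on why restrictedness is needed is a sensible elaboration but does not change the argument.
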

\begin{proof}
It follows by Lemma~\ref{val01} and by Corollary~\ref{valequal}.
\end{proof}

\begin{teo} \ \\[1mm] \label{th1.23}
(i) For every $x \in {\bf V}$ and $u \in {\bf V}^{\tmA}$: $\displaystyle \termvalue{u \pert \hat{x}} = \bigvee_{y \in x} \termvalue{u \approx \hat{y}}$.\\[2mm]
(ii) For $x,y \in {\bf V}$: 
\begin{itemize}
\item[] $x \in y$ holds in \ZFC\ iff \ ${\bf V}^{\tmA} \models (\hat{x} \pert \hat{y})$ for every \mA;
\item[] $x = y$ holds in \ZFC\ iff \ ${\bf V}^{\tmA} \models (\hat{x} \approx \hat{y})$ for every \mA.
\end{itemize}
(iii) The function $x \mapsto \hat{x}$ is one-to-one from ${\bf V}$ to ${\bf V}^{\aptz}$.\\[2mm]
(iv) For every $u \in {\bf V}^{\aptz}$ there is a (unique) $x \in {\bf V}$ such that ${\bf V}^{\tmA} \models (u \approx \hat{x})$ for all \mA.\\[2mm]
(v) For every formula $\varphi(v_1,\ldots,v_n)$ in $\mathcal{L}_p$ and every $x_1,\ldots,x_n \in {\bf V}$: 
\begin{itemize}
\item[] $\varphi(x_1,\ldots,x_n)$ holds in \ZFC\ iff \ ${\bf V}^{\aptz} \models \varphi(\widehat{x_1},\ldots,\widehat{x_n})$.
\end{itemize}
In addition if $\varphi$ is restricted (recall Definition~\ref{def-langs}) then, for every $x_1,\ldots,x_n \in {\bf V}$:
\begin{itemize}
\item[]  $\varphi(x_1,\ldots,x_n)$ holds in \ZFC\ iff \ ${\bf V}^{\tmA} \models \varphi(\widehat{x_1},\ldots,\widehat{x_n})$, for every \mA.
\end{itemize} 
\end{teo}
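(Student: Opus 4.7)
The overall plan is to follow the classical Bell-style development for Boolean-valued models (Theorem~1.23 in~\cite{bell}), taking advantage of the fact that by Theorem~\ref{basicVB} and Theorem~\ref{BQ} the first coordinates of $\termvalue{\cdot}^{{\bf V}^{\tmA}}$ behave exactly like the truth values of a standard Boolean-valued model. The twist-specific points are: (a) designated elements are those $(z_1,z_2)$ with $z_1=1$, so throughout I only need to track first coordinates; (b) since $\hat{x}(\hat{y})={\bf 1}=(1,0)$, the check-operation $\hat{\cdot}$ produces only ``extremal'' values, which lets me reduce to the two-element twist-algebra $\mathcal{T}_{\mathbb{A}_2}$ and invoke Remark~\ref{remVA2}, Theorem~\ref{th1.20} and Corollary~\ref{valequal}.

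For (i), I unfold Definition~\ref{defint-twist}: $dom(\hat{x})=\{\hat{y}:y\in x\}$ and $\hat{x}(\hat{y})={\bf 1}$, so $\termvalue{u\pert\hat{x}}=\bigvee_{y\in x}({\bf 1}\,\tilde\wedge\,\termvalue{\hat{y}\approx u})=\bigvee_{y\in x}\termvalue{u\approx\hat{y}}$, using symmetry (Theorem~\ref{basicVB}(iii)). For (ii), I proceed by simultaneous induction on $\max(rank(x),rank(y))$ (for atomic $\pert$ and $\approx$ in parallel): the ``only if'' for $\pert$ is immediate from~(i) and Theorem~\ref{basicVB}(i), while the ``only if'' for $\approx$ uses $\hat{x}=\hat{y}$. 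For the converses, if $x\neq y$ pick $z\in(x\setminus y)\cup(y\setminus x)$ and compute, via~(i), that $\termvalue{\hat{z}\pert\hat{y}}_1=\bigvee_{w\in y}\termvalue{\hat{z}\approx\hat{w}}_1$; by inductive hypothesis each joinand is $\neq 1$ in $\mathbb{A}_2$, hence $=0$, forcing $\termvalue{\hat{x}\approx\hat{y}}_1<1$ in $V^{\aptz}$; similarly for $x\notin y$. Statement~(iii) is then a direct corollary: $\hat{x}=\hat{y}$ trivially yields $\termvalue{\hat{x}\approx\hat{y}}\in D$, and~(ii) forces $x=y$.

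For (iv), fix $u\in{\bf V}^{\aptz}$ and proceed by induction on the least $\alpha$ with $u\in{\bf V}^{\mathcal{T}_{\mathbb{A}_2}}_\alpha$. By inductive hypothesis, each $v\in dom(u)$ has a unique $y_v\in{\bf V}$ with ${\bf V}^{\tmA}\models v\approx\widehat{y_v}$ for all $\mA$. Define $x=\{y_v\,:\,v\in dom(u),\,u(v)_1=1\}$. To check $\termvalue{u\approx\hat{x}}_1^{V^{\aptz}}=1$, split the formula: in the conjunct $\bigwedge_{v\in dom(u)}(u(v)_1\to\termvalue{v\pert\hat{x}}_1)$, either $u(v)_1=0$ (vacuous) or $u(v)_1=1$ (in which case $y_v\in x$ and $\termvalue{v\approx\widehat{y_v}}_1=1$ gives $\termvalue{v\pert\hat{x}}_1=1$ via~(i)); symmetrically for $\bigwedge_{y\in x}\termvalue{\hat{y}\pert u}_1$. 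Then Corollary~\ref{valequal} transfers validity from $V^{\aptz}$ to every $V^{\tmA}$. Uniqueness comes from~(ii) plus the transitivity bound $\termvalue{\hat{x}\approx\widehat{x'}}_1\geq\termvalue{\hat{x}\approx u}_1\wedge\termvalue{u\approx\widehat{x'}}_1$ from Theorem~\ref{basicVB}(iv).

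For (v), I argue by induction on the complexity of $\varphi\in\mathcal{L}_p$, working inside ${\bf V}^{\aptz}$. Atomic cases are~(ii); the Boolean connectives are immediate since twist-values of formulas in $\mathcal{L}_p$ evaluated at check-names lie in $\{{\bf 0},{\bf 1}\}$ by Lemma~\ref{val01}, so the $\tilde{\wedge},\tilde{\vee},\tilde{\to}$ operations reduce to the classical ones on $\{{\bf 0},{\bf 1}\}$. The key step, and the main obstacle, is the quantifier case: to evaluate $\termvalue{\forall v\,\varphi(v,\widehat{x_1},\ldots)}^{V^{\aptz}}$ I must range $v$ over \emph{all} $u\in{\bf V}^{\aptz}$, not just check-names. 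Here~(iv) bridges the gap: every such $u$ satisfies $V^{\aptz}\models u\approx\hat{z}$ for a unique $z\in{\bf V}$, and the substitutivity statement Theorem~\ref{basicVB}(vii) then gives $\termvalue{\varphi(u,\ldots)}_1=\termvalue{\varphi(\hat{z},\ldots)}_1$, so the quantifier over $V^{\aptz}$ collapses to a quantifier over ${\bf V}$, which matches the metatheoretic quantifier by inductive hypothesis. The addendum for restricted $\varphi$ then follows because Corollary~\ref{valequal} ensures $\termvalue{\varphi(\widehat{x_1},\ldots,\widehat{x_n})}^{V^{\aptz}}=\termvalue{\varphi(\widehat{x_1},\ldots,\widehat{x_n})}^{V^{\tmA}}$ for every complete Boolean algebra $\mA$, since $\mathbb{A}_2$ is a complete subalgebra of $\mA$.
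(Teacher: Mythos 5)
Your proposal is correct and takes essentially the same route as the paper, which likewise proves the theorem by adapting Bell's Theorem~1.23 point by point using the same ingredients: ${\bf 1}\,\tilde{\wedge}\,a=a$ for (i), rank induction together with the $\{{\bf 0},{\bf 1}\}$-valuedness of check-names (Lemma~\ref{val01}, Corollary~\ref{corval01}) for (ii) and the connective cases of (v), the construction of $x$ from the designated part of $dom(u)$ for (iv), and Theorem~\ref{basicVB}(vii) in place of the classical Leibniz rule to collapse the quantifiers in (v). The only cosmetic differences are your choice of induction measure in (ii) and your reduction there to $\mathbb{A}_2$ followed by transfer via Corollary~\ref{valequal}, where the paper works with arbitrary \mA\ directly.
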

\begin{proof}
It follows by an easy adaptation of the proof of~\cite[Theorem~1.23]{bell}. The only points to be considered are the following:\\[1mm]
(i) Note that ${\bf 1} \, \tilde{\land} \, a = a$ for every $a \in |\tmA|$. Then, the adaptation of the proof of this item is immediate.\\[1mm]
(ii) Both assertions are simultaneously proven by induction on $rank(y)$ (see Remark~\ref{IP}), where the induction hypothesis is: for every $z$ with $rank(z) < rank(y)$, $x \in z$ iff  ${\bf V}^{\tmA} \models (\hat{x} \pert \hat{z})$ for every $x$ and \mA; $x = z$ iff  ${\bf V}^{\tmA} \models (\hat{x} \approx \hat{z})$ for every $x$ and \mA; and  $z \in x$ iff  ${\bf V}^{\tmA} \models (\hat{z} \pert \hat{x})$ for every $x$ and \mA.  For the first assertion,  Corollary~\ref{corval01} should  be used. For the second assertion, note that $1\to a = a$ for every $a \in |\mA|$. Hence $\displaystyle (\termvalue{\hat{x} \approx \hat{z}}^{{\bf V}^{\tmA}})_1 = \bigwedge_{y \in x} (\termvalue{\hat{y} \pert \hat{z}}^{{\bf V}^{\tmA}})_1 \, \land \, \bigwedge_{y \in z} (\termvalue{\hat{y} \pert \hat{x}}^{{\bf V}^{\tmA}})_1$. Use then the first assertion, induction hypothesis and the axiom of extensionality. \\[1mm]
(iii) It follows from~(ii). \\[1mm] 
(iv) By adapting the proof of~\cite[Theorem~1.23(iv)]{bell}, at some point of the proof the set  $v=\{ y \in {\bf V} \ : \ u(x)={\bf 1} \mbox{ and } (\termvalue{x \approx \hat{y}}^{{\bf V}^{\tmA}})_1={\bf 1}$, for some $x \in dom(u)\}$  of ${\bf V}$ must be considered.\\[1mm]
(v) In order to  adapt the proof of~\cite[Theorem~1.23(v)]{bell} it should be noted that, if $\emptyset \neq X \subseteq |\aptz|$ is such that $\bigvee_{\aptz} X=1$, then $1 \in X$. 
From this, the inductive step $\varphi=\exists x \psi$ can be treated analogously to the proof of~\cite[Theorem~1.23(v)]{bell}. In addition, the use of the Leibniz rule (see~\cite[Theorem~1.17(vii)]{bell}) at this point of the proof  can be adapted here to an application of Theorem~\ref{basicVB}(vii) as follows:
$$1=(\termvalue{\psi(x,\widehat{x_1},\ldots,\widehat{x_n}}^{{\bf V}^{\aptz}})_1 \land (\termvalue{x \approx \hat{y}}^{{\bf V}^{\aptz}})_1 \leq (\termvalue{\psi(\hat{y},\widehat{x_1},\ldots,\widehat{x_n}}^{{\bf V}^{\aptz}})_1.$$
Hence $(\termvalue{\psi(\hat{y},\widehat{x_1},\ldots,\widehat{x_n}}^{{\bf V}^{\aptz}})_1=1$, and the rest of the proof follows from here.
\end{proof}

\

\noindent
Now it will be shown the {\em Maximum Principle} of Boolean-valued models (see~\cite[Lemma~1.27]{bell}) is also valid in twist-valued models. The adaptation to our framework of the proof of this result found in~\cite{bell} is straightfoward.

\begin{defi} \label{defmix}
Let \mA\ be a complete Boolean algebra. Given sets $E=\{a_i \ : \ i \in I\} \subseteq |\mA|$ and  $F=\{u_i \ : \ i \in I\} \subseteq {\bf V}^{\tmA}$, the {\em twist mixture} of $F$ with respect to $E$ is the element $u=\sum_{i \in I} a_i \odot u_i$ of ${\bf V}^{\tmA}$ defined as follows:\footnote{It is worth observing that the definition of the second coordinate of $u(z)$ will be irrelevant.}
\begin{itemize}
\item[] $dom(u) =  \displaystyle \bigcup_{i \in I} dom(u_i)$, and
\item[] $u(z)= \displaystyle \big( \bigvee_{i \in I}(a_i \land \termvalue{z \pert u_i}_1), \sneg\bigvee_{i \in I}(a_i \land \termvalue{z \pert u_i}_1) \big)$, for every $z \in dom(u)$.
\end{itemize}
\end{defi}

\begin{lema} [Mixing Lemma] \label{mix-lem}
Let $\{a_i \ : \ i \in I\} \subseteq |\mA|$ and  $\{u_i \ : \ i \in I\} \subseteq {\bf V}^{\tmA}$, and let $u=\sum_{i \in I} a_i \odot u_i$. Suppose that, for every $i,j \in I$, $a_i \land a_j \leq \termvalue{u_i \approx u_j}_1$.  Then $a_i \leq \termvalue{u \approx u_i}_1$ for every $i \in I$. 
\end{lema}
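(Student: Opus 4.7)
The plan is to prove $a_i \leq \termvalue{u \approx u_i}_1$ for each $i \in I$ by establishing the two conjunct inequalities that result from unfolding $\termvalue{u \approx u_i}_1$. Computing first coordinates from Definition~\ref{defint-twist},
\[
\termvalue{u \approx u_i}_1 = \bigwedge_{z \in dom(u)}\bigl((u(z))_1 \to \termvalue{z \pert u_i}_1\bigr) \,\wedge\, \bigwedge_{z \in dom(u_i)}\bigl((u_i(z))_1 \to \termvalue{z \pert u}_1\bigr),
\]
so, using the Boolean-algebra equivalence $a \leq b \to c$ iff $a \wedge b \leq c$, it suffices to establish two pointwise inequalities: (a) $a_i \wedge (u(z))_1 \leq \termvalue{z \pert u_i}_1$ for each $z \in dom(u)$, and (b) $a_i \wedge (u_i(z))_1 \leq \termvalue{z \pert u}_1$ for each $z \in dom(u_i)$.

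For (a), I would distribute $a_i$ across the join defining $(u(z))_1 = \bigvee_{j \in I}(a_j \wedge \termvalue{z \pert u_j}_1)$ and check each summand separately. By hypothesis $a_i \wedge a_j \leq \termvalue{u_i \approx u_j}_1$, which by Theorem~\ref{basicVB}(iii) equals $\termvalue{u_j \approx u_i}_1$; then Theorem~\ref{basicVB}(vi) applied with $v := u_j$, $w := u_i$, $u := z$ yields $\termvalue{u_j \approx u_i}_1 \wedge \termvalue{z \pert u_j}_1 \leq \termvalue{z \pert u_i}_1$. Composing these delivers (a).

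For (b), the crucial observation is that $dom(u_i) \subseteq dom(u)$. Given $z \in dom(u_i)$, the $x = z$ summand in the expansion of $\termvalue{z \pert u}_1$, together with $\termvalue{z \approx z}_1 = 1$ from Theorem~\ref{basicVB}(i), furnishes $(u(z))_1 \leq \termvalue{z \pert u}_1$. On the other side, Theorem~\ref{basicVB}(ii) yields $(u_i(z))_1 \leq \termvalue{z \pert u_i}_1$, and the $j = i$ summand of $(u(z))_1$ yields $a_i \wedge \termvalue{z \pert u_i}_1 \leq (u(z))_1$. Chaining these three inequalities proves (b).

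The only real subtlety is bookkeeping: the whole argument takes place at the level of first coordinates, since the second coordinates of $u(z)$ never enter the computation of $\termvalue{u\approx u_i}_1$ (this is precisely why they were defined redundantly as Boolean complements in Definition~\ref{defmix}). Once this is noticed, the construction of the twist mixture was engineered so that $(u(z))_1$ coincides with the usual Boolean mixture, and the proof runs parallel to Bell's argument for~\cite[Lemma~1.27]{bell}.
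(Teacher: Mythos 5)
Your proof is correct and is precisely the ``straightforward adaptation of~\cite[Lemma~1.25]{bell}'' that the paper invokes, using exactly the ingredients the paper cites (Theorem~\ref{basicVB} items~(ii), (iii) and~(vi), plus residuation and the infinite distributive law in the complete Boolean algebra). Your closing remark that only the first coordinate of $u(z)$ matters also matches the paper's footnote to Definition~\ref{defmix}, so there is nothing to add.
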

\begin{proof}
It can be proved by a straightforward adaptation of the proof of~\cite[Lemma~1.25]{bell}, taking into account Theorem~\ref{basicVB} items~(ii), (iii) and~(vi).
\end{proof}

\

\noindent The next fundamental result shows that the set of pure \ZF-sentences validated by each twist-valued structure ${\bf V}^{\tmA}$ is a Henkin theory:

\begin{lema} [The Maximum Principle] \label{MPL}
Let \mA\ be a complete Boolean algebra. Then, for every formula $\varphi(x)$ in  $\mathcal{L}_p(\tmA)$, there is $u\in {\bf V}^{\tmA}$ such that
$$\termvalue{\exists x\varphi(x)}_1 = \termvalue{\varphi(u)}_1.$$
In particular, if ${\bf V}^{\tmA} \models \exists x\varphi(x)$ then ${\bf V}^{\tmA} \models \varphi(u)$ for some  $u\in {\bf V}^{\tmA}$.
\end{lema}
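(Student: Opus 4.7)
The plan is to mimic the classical argument for the Maximum Principle in Boolean-valued models (Bell, Lemma~1.27), using our Mixing Lemma and the Leibniz-style substitution result from Theorem~\ref{basicVB}(vii), which applies precisely because $\varphi(x)$ ranges over $\mathcal{L}_p(\tmA)$.

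First, I would unfold the quantifier via Definition~\ref{defint-twist} to get
\[
b \defin \termvalue{\exists x\varphi(x)}_1 = \bigvee_{v\in {\bf V}^{\tmA}} \termvalue{\varphi(v)}_1.
\]
The goal is to produce a single witness $u$ attaining this supremum. For this, invoke Zorn's Lemma on the collection of families $\{(a_i,u_i)\}_{i\in I}$ with $u_i\in {\bf V}^{\tmA}$, $a_i\in|\mA|$, $a_i\leq \termvalue{\varphi(u_i)}_1$, and the $a_i$'s pairwise disjoint, ordered by inclusion; a maximal element yields a pairwise disjoint family $\{a_i\}_{i\in I}\subseteq |\mA|$ with chosen $u_i$'s. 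Maximality together with the defining formula for $b$ forces $\bigvee_i a_i=b$: otherwise some $v\in {\bf V}^{\tmA}$ would satisfy $\termvalue{\varphi(v)}_1\wedge \sneg\bigvee_i a_i \neq 0$, contradicting maximality.

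Next, form the twist mixture $u=\sum_{i\in I} a_i\odot u_i$ of Definition~\ref{defmix}. Since the $a_i$'s are pairwise disjoint, the hypothesis of the Mixing Lemma $a_i\wedge a_j\leq \termvalue{u_i\approx u_j}_1$ holds trivially (the left side is $0$) for $i\neq j$, and for $i=j$ it reduces to $a_i\leq\termvalue{u_i\approx u_i}_1=1$ by Theorem~\ref{basicVB}(i). Therefore Lemma~\ref{mix-lem} gives $a_i\leq\termvalue{u\approx u_i}_1$ for every $i\in I$.

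Now apply the Leibniz property Theorem~\ref{basicVB}(vii) to the formula $\varphi(x)\in\mathcal{L}_p(\tmA)$:
\[
a_i \;\leq\; \termvalue{u\approx u_i}_1\wedge \termvalue{\varphi(u_i)}_1 \;\leq\; \termvalue{\varphi(u)}_1
\]
for every $i$. Taking the supremum over $i$ yields $b=\bigvee_i a_i\leq \termvalue{\varphi(u)}_1$. The reverse inequality $\termvalue{\varphi(u)}_1\leq b$ is immediate from the definition of $\termvalue{\exists x\varphi(x)}_1$. Hence $\termvalue{\exists x\varphi(x)}_1=\termvalue{\varphi(u)}_1$. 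The ``in particular'' clause is then trivial: if ${\bf V}^{\tmA}\models\exists x\varphi(x)$ then $\termvalue{\exists x\varphi(x)}_1=1$, whence $\termvalue{\varphi(u)}_1=1$ and so $\termvalue{\varphi(u)}\in D_{\mA}$.

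The main obstacle is purely conceptual: one must ensure that the Leibniz rule (Theorem~\ref{basicVB}(vii)) and the Mixing Lemma survive the shift from the classical Boolean setting to the twist-valued one. Both have already been established for first coordinates, and the statement of the Maximum Principle is cast in terms of first coordinates and restricted to $\mathcal{L}_p(\tmA)$ (so that $\neg$ does not appear and the strong-negation/paraconsistent machinery causes no interference). Once this is observed, the remainder of the argument is a direct transcription of Bell's proof.
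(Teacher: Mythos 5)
Your argument follows the same overall strategy as the paper's proof: decompose $b=\termvalue{\exists x\varphi(x)}_1$ into a pairwise disjoint family $\{a_i\}$ with attached witnesses $u_i$ satisfying $a_i\leq\termvalue{\varphi(u_i)}_1$, form the twist mixture $u=\sum_i a_i\odot u_i$, and conclude via the Mixing Lemma together with Theorem~\ref{basicVB} (using (vii), which is exactly where the restriction to $\mathcal{L}_p(\tmA)$ enters). The one genuine difference is how the disjoint family is produced. The paper (following Bell) first observes that $X=\{\termvalue{\varphi(v)}\,:\,v\in{\bf V}^{\tmA}\}$ is a \emph{set} (a subset of $\tA$), uses AC to write $X=\{\termvalue{\varphi(u_\xi)}\,:\,\xi<\alpha\}$ for an ordinal $\alpha$, and then puts $a_\xi=\termvalue{\varphi(u_\xi)}_1\land\sneg\bigvee_{\eta<\xi}\termvalue{\varphi(u_\eta)}_1$, which is pairwise disjoint with supremum $b$ by construction. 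Your route instead invokes Zorn's Lemma on ``the collection of families $\{(a_i,u_i)\}_{i\in I}$ with $u_i\in{\bf V}^{\tmA}$''; since ${\bf V}^{\tmA}$ is a proper class, this collection is a proper class and Zorn's Lemma does not apply to it as stated. The step is repairable in one line --- first fix, by Scott's trick plus AC, a single witness $v_c$ of minimal rank for each $c$ in the set $\{\termvalue{\varphi(v)}_1\,:\,v\in{\bf V}^{\tmA}\}\subseteq|\mA|$, and restrict the witnesses occurring in your families to that set --- but the enumeration device of the paper sidesteps the issue entirely, which is presumably why Bell's proof is organized that way. Everything downstream of the disjoint family (the trivial verification of the Mixing Lemma hypothesis from disjointness and Theorem~\ref{basicVB}(i), the chain $a_i\leq\termvalue{u\approx u_i}_1\wedge\termvalue{\varphi(u_i)}_1\leq\termvalue{\varphi(u)}_1$, and the reverse inequality from Definition~\ref{defint-twist}) coincides with the paper's argument and is correct.
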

\begin{proof}
The proof is obtained by a straightforward adaptation of the proof of~\cite[Lemma~1.27]{bell}.  The collection $X=\{ \termvalue{\varphi(u)} \ : \ u \in {\bf V}^{\tmA}\}$ is  a set, since \tmA\ is a set. By the Axiom of Choice, there is an ordinal $\alpha$ and a set $\{u_\xi \ : \xi < \alpha\} \subseteq {\bf V}^{\tmA}\}$ such that $X= \{ \termvalue{\varphi(u_\xi)} \ : \ \xi < \alpha\}$, hence $\termvalue{\exists x\varphi(x)}_1 = \bigvee_{\xi < \alpha}\termvalue{\varphi(u_\xi)}_1$. For each $\xi < \alpha$ let $a_\xi= \termvalue{\varphi(u_\xi)}_1 \land \sneg \bigvee_{\eta  < \xi}\termvalue{\varphi(u_\eta)}_1$, and let $u=\sum_{\xi < \alpha} a_\xi \odot u_\xi$. By the Mixing Lemma~\ref{mix-lem} and by Theorem~\ref{basicVB} items~(ii) and~(vii) it follows that $\termvalue{\exists x\varphi(x)}_1 = \termvalue{\varphi(u)}_1$.
\end{proof}

\begin{coro} \label{coroMPL}
Let $\varphi(x)$ be a formula in  $\mathcal{L}_p(\tmA)$ such that ${\bf V}^{\tmA} \models \exists x\varphi(x)$. Then:\\[1mm]
(i) For any $v \in {\bf V}^{\tmA}$ there exists $u \in {\bf V}^{\tmA}$ such that $\termvalue{\varphi(u)}_1=1$ and $\termvalue{\varphi(v)}_1=\termvalue{u\approx v}_1$.\\[1mm]
(ii) Let $\psi(x)$ be a formula in  $\mathcal{L}_p(\tmA)$ such that ${\bf V}^{\tmA} \models \varphi(u)$  implies that ${\bf V}^{\tmA} \models \psi(u)$, for every $u\in {\bf V}^{\tmA}$. Then   ${\bf V}^{\tmA} \models \forall x (\varphi(x) \to \psi(x))$.
\end{coro}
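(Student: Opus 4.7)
The plan is to obtain both parts by a single application of the Maximum Principle together with a Mixing Lemma construction that ``patches'' a generic witness with the prescribed $v$. First, since ${\bf V}^{\tmA} \models \exists x\varphi(x)$, Lemma~\ref{MPL} yields some $u_0 \in {\bf V}^{\tmA}$ with $\termvalue{\varphi(u_0)}_1 = 1$. This $u_0$ will play the role of a ``default'' witness.

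For part~(i), given $v \in {\bf V}^{\tmA}$, set $a = \termvalue{\varphi(v)}_1 \in |\mA|$ and form the twist mixture $u = a \odot v + \sneg a \odot u_0$ in the sense of Definition~\ref{defmix}. The compatibility condition needed to invoke the Mixing Lemma~\ref{mix-lem} is $a \wedge \sneg a \leq \termvalue{v \approx u_0}_1$, which holds trivially since $a \wedge \sneg a = 0$. From Lemma~\ref{mix-lem} I obtain $a \leq \termvalue{u \approx v}_1$ and $\sneg a \leq \termvalue{u \approx u_0}_1$. Using the Leibniz-type inequality Theorem~\ref{basicVB}(vii) twice gives $a \leq \termvalue{\varphi(u)}_1$ and $\sneg a \wedge \termvalue{\varphi(u_0)}_1 = \sneg a \leq \termvalue{\varphi(u)}_1$, so $\termvalue{\varphi(u)}_1 \geq a \vee \sneg a = 1$. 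For the equality $\termvalue{u \approx v}_1 = a$, the inequality $\geq$ is already established; for $\leq$, apply Theorem~\ref{basicVB}(vii) once more to get $\termvalue{u \approx v}_1 \wedge \termvalue{\varphi(u)}_1 \leq \termvalue{\varphi(v)}_1 = a$, and since $\termvalue{\varphi(u)}_1 = 1$ this collapses to $\termvalue{u \approx v}_1 \leq a$.

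For part~(ii), by Definition~\ref{valVTA} it suffices to show $\termvalue{\forall x(\varphi(x) \to \psi(x))}_1 = 1$, and unfolding the quantifier and implication in Definitions~\ref{defint-twist} and~\ref{defKlfi1} this reduces to proving $\termvalue{\varphi(w)}_1 \leq \termvalue{\psi(w)}_1$ for every $w \in {\bf V}^{\tmA}$. Fix such a $w$ and apply part~(i) with $v = w$ to produce $u$ with $\termvalue{\varphi(u)}_1 = 1$ and $\termvalue{\varphi(w)}_1 = \termvalue{u \approx w}_1$. Since $\termvalue{\varphi(u)}_1 = 1$ means ${\bf V}^{\tmA} \models \varphi(u)$, the hypothesis gives $\termvalue{\psi(u)}_1 = 1$; then Theorem~\ref{basicVB}(vii) yields $\termvalue{u \approx w}_1 = \termvalue{u \approx w}_1 \wedge \termvalue{\psi(u)}_1 \leq \termvalue{\psi(w)}_1$, and hence $\termvalue{\varphi(w)}_1 \leq \termvalue{\psi(w)}_1$, as required.

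The only potentially delicate step is the verification that the twist mixture $u$ behaves as expected, i.e., that the bounds provided by the Mixing Lemma actually suffice to ``transport'' the truth of $\varphi$ between $u$, $v$, and $u_0$; once one recognizes that the first-coordinate Boolean identity $a \vee \sneg a = 1$ combined with Theorem~\ref{basicVB}(vii) does all the work, the argument is routine. Note that throughout, only the first coordinates of the twist truth-values are manipulated, which is precisely why designation in $D_\mA$ (membership in $\{(1,a) : a \in A\}$) is preserved.
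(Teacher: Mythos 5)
Your proof is correct and follows essentially the same route as the paper, which simply cites the adaptation of Bell's Corollary~1.28 using Lemma~\ref{MPL}, the Mixing Lemma, and the Leibniz-type inequality of Theorem~\ref{basicVB}(vii); your write-up just makes that adaptation explicit (default witness $u_0$ from the Maximum Principle, the mixture $a\odot v+\sneg a\odot u_0$, and transfer of truth along $\termvalue{u\approx v}_1$). The details all check out, including the restriction of the argument to first coordinates and the requirement that $\varphi,\psi$ lie in $\mathcal{L}_p(\tmA)$ so that item~(vii) applies.
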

\begin{proof}
Is an easy adaptation of the proof of~\cite[Corollary~1.28]{bell}, taking into account Lemma~\ref{MPL} and Theorem~\ref{basicVB} items~(ii) and~(vii).
\end{proof}

\

\noindent
The notion of {\em core} for a Boolean-valued set (see~\cite{bell})  can be easily adapted to twist-valued sets:

\begin{defi} \label{defcore}
Let  $u \in {\bf V}^{\tmA}$. A {\em core} for $u$ is a set $v \subseteq {\bf V}^{\tmA}$ such that:~(i)~$\termvalue{x\pert u}_1=1$ for every $x \in v$; and~(ii)~for every $y \in {\bf V}^{\tmA}$ such that $\termvalue{y\pert u}_1=1$, there is a unique $x \in v$ such that $\termvalue{x\approx y}_1=1$.
\end{defi}

\begin{lema} \label{lemmacore}
Any  $u \in {\bf V}^{\tmA}$ has a core.
\end{lema}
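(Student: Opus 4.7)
Let $C = \{y \in {\bf V}^{\tmA} : \termvalue{y \pert u}_1 = 1\}$ and define $y \sim y'$ iff $\termvalue{y \approx y'}_1 = 1$; by Theorem~\ref{basicVB}(i), (iii), (iv) this is an equivalence relation on ${\bf V}^{\tmA}$, which restricts to $C$. The plan is to quotient $C$ by $\sim$ and select one representative per class via the Axiom of Choice. The immediate obstacle is that $C$ is a proper class, so the crux is to exhibit, for every $\sim$-class in $C$, a canonical representative lying inside a single set -- which I do using twist mixtures of elements of $dom(u)$.

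Fix an enumeration $\{x_\xi : \xi < \alpha\}$ of $dom(u)$ (by AC). For any $y \in C$ set $b_\xi = (u(x_\xi))_1 \wedge \termvalue{x_\xi \approx y}_1$ and $a_\xi = b_\xi \wedge \sneg \bigvee_{\eta < \xi} b_\eta$. Then the $a_\xi$ are pairwise disjoint and $\bigvee_\xi a_\xi = \bigvee_\xi b_\xi$, which by Definition~\ref{defint-twist} equals $\termvalue{y \pert u}_1 = 1$. Form the twist mixture $z_y = \sum_{\xi < \alpha} a_\xi \odot x_\xi$ as in Definition~\ref{defmix}. Since $a_\xi \wedge a_\eta = 0 \leq \termvalue{x_\xi \approx x_\eta}_1$ for $\xi \neq \eta$ (and the case $\xi = \eta$ follows from Theorem~\ref{basicVB}(i)), the Mixing Lemma~\ref{mix-lem} yields $a_\xi \leq \termvalue{z_y \approx x_\xi}_1$. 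Combining this with $a_\xi \leq \termvalue{x_\xi \approx y}_1$ via the transitivity clause Theorem~\ref{basicVB}(iv) gives $a_\xi \leq \termvalue{z_y \approx y}_1$, whence $\termvalue{z_y \approx y}_1 = 1$, i.e.\ $z_y \sim y$. Analogously, chaining $a_\xi \leq \termvalue{x_\xi \approx z_y}_1$ (Theorem~\ref{basicVB}(iii)) with $a_\xi \leq (u(x_\xi))_1 \leq \termvalue{x_\xi \pert u}_1$ (Theorem~\ref{basicVB}(ii)) through Theorem~\ref{basicVB}(v) yields $\termvalue{z_y \pert u}_1 = 1$, so $z_y \in C$.

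Since $dom(z_y) \subseteq D := \bigcup_{x \in dom(u)} dom(x)$ (a set) and $ran(z_y) \subseteq \tA$, every $z_y$ lives in the set $M$ of all functions from subsets of $D$ into $\tA$. Hence the quotient $(M \cap C)/{\sim}$ is a set, and AC produces a set $v$ containing exactly one $\sim$-representative per class. Property~(i) of Definition~\ref{defcore} is immediate from $v \subseteq C$; for~(ii), given any $y \in C$ we have $z_y \in M \cap C$ with $z_y \sim y$, so the unique $x \in v$ representing $[z_y]_\sim$ satisfies $x \sim y$ by transitivity, with uniqueness built into the construction. The main obstacle is the simultaneous verification that $\termvalue{z_y \approx y}_1 = \termvalue{z_y \pert u}_1 = 1$, which requires carefully threading the Mixing Lemma through items~(ii), (iii), (iv), (v) of Theorem~\ref{basicVB}; once this is done, reducing the proper-class quotient to a set quotient via these bounded mixtures is routine.
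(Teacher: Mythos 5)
Your proof is correct and is precisely the adaptation of Bell's Lemma~1.31 that the paper's one-line proof invokes without spelling out: reduce each $y$ with $\termvalue{y\pert u}_1=1$ to a mixture of elements of $dom(u)$ via the Mixing Lemma~\ref{mix-lem} and Theorem~\ref{basicVB}, observe that these mixtures all lie in a fixed set, and then choose representatives of the $\sim$-classes by AC. The only difference from the paper is that you have written out in full the details it leaves implicit.
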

\begin{proof}
Is an easy adaptation of the proof of~\cite[Lemma~1.31]{bell}.
\end{proof}

\

\noindent Let $\varnothing$ be the empty element of ${\bf V}^{\tmA}$. As happens with Boolean-valued models, if $u \in {\bf V}^{\tmA}$ is such that ${\bf V}^{\tmA} \models \sneg(u \approx \varnothing)$ then, by the Maximum Principle, any core of $u$ is nonempty.

\begin{coro} \label{coro2MPL}
Let $u \in {\bf V}^{\tmA}$ such that ${\bf V}^{\tmA} \models \sneg(u \approx \varnothing)$, and let $v$ be a core for $u$. Then, for any $x \in {\bf V}^{\tmA}$ there exists $y \in v$ such that $\termvalue{x \approx y}_1=\termvalue{x\pert u}_1$.
\end{coro}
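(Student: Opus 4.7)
The plan is to apply Corollary~\ref{coroMPL}(i) to the formula $\varphi(z) \equiv z \pert u$ and then exploit the uniqueness clause in the definition of core, combined with transitivity of $\approx$ from Theorem~\ref{basicVB}.

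First, I would verify that the hypothesis of Corollary~\ref{coroMPL}(i) is satisfied, i.e., that ${\bf V}^{\tmA} \models \exists z(z \pert u)$. Since $dom(\varnothing) = \emptyset$, a direct computation using Definition~\ref{defint-twist} gives $\termvalue{x \pert \varnothing}_1 = 0$ for every $x$, and hence $\termvalue{u \approx \varnothing}_1 = \bigwedge_{x \in dom(u)} \sneg (u(x))_1$. The assumption ${\bf V}^{\tmA} \models \sneg(u \approx \varnothing)$ then yields $\bigvee_{x \in dom(u)} (u(x))_1 = 1$. Using Theorem~\ref{basicVB}(ii), $\termvalue{\exists z(z \pert u)}_1 \geq \bigvee_{x \in dom(u)} \termvalue{x \pert u}_1 \geq \bigvee_{x \in dom(u)} (u(x))_1 = 1$.

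Second, with the hypothesis in place, Corollary~\ref{coroMPL}(i) applied to $\varphi(z) \equiv z \pert u$ (using the given $x$ as the parameter) produces some $u_0 \in {\bf V}^{\tmA}$ with $\termvalue{u_0 \pert u}_1 = 1$ and $\termvalue{x \pert u}_1 = \termvalue{u_0 \approx x}_1$. Now, because $v$ is a core of $u$, Definition~\ref{defcore}(ii) delivers a (unique) element $y \in v$ satisfying $\termvalue{y \approx u_0}_1 = 1$.

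Third, I would show that this $y$ is the witness we need, i.e.\ that $\termvalue{x \approx y}_1 = \termvalue{x \pert u}_1$. Applying Theorem~\ref{basicVB}(iv) (transitivity of $\approx$) together with (iii) (symmetry), the identity $\termvalue{y \approx u_0}_1 = 1$ yields $\termvalue{y \approx x}_1 \geq \termvalue{y \approx u_0}_1 \wedge \termvalue{u_0 \approx x}_1 = \termvalue{u_0 \approx x}_1 = \termvalue{x \pert u}_1$; the symmetric inequality $\termvalue{u_0 \approx x}_1 \geq \termvalue{u_0 \approx y}_1 \wedge \termvalue{y \approx x}_1 = \termvalue{y \approx x}_1$ gives the reverse.

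The only non-routine step is the preliminary reduction from $\sneg(u \approx \varnothing)$ to $\exists z (z \pert u)$, which is what makes Corollary~\ref{coroMPL}(i) available; once that is in hand, the proof is a straightforward imitation of the classical Boolean-valued argument, relying exclusively on the first-coordinate identities already established in Theorem~\ref{basicVB}.
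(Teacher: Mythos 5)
Your proof is correct and follows exactly the route the paper intends: the paper's own proof is just the remark that the result ``follows from Corollary~\ref{coroMPL}'', and your argument is precisely the fleshed-out version of that, including the needed preliminary reduction of ${\bf V}^{\tmA} \models \sneg(u \approx \varnothing)$ to ${\bf V}^{\tmA} \models \exists z(z \pert u)$ (which the paper itself flags just before the statement) and the transitivity/symmetry bookkeeping from Theorem~\ref{basicVB}(iii)--(iv). No gaps.
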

\begin{proof}
Is follows from Corollary~\ref{coroMPL}.
\end{proof}

\

\noindent
From the results obtained above, one of the main results of the paper can be established:

\begin{teo} \label{modelZFC}
All the axioms (hence all the theorems) of \ZFC, when restricted to pure \ZF-languages $\mathcal{L}_p(\tmA)$ (recall Definition~\ref{def-langs}),  are valid in  ${\bf V}^{\tmA}$, for every \mA.
\end{teo}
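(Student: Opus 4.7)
The plan is to verify each axiom of \ZFC\ individually and show in each case that its twist truth-value has first coordinate equal to $1$, which by Definition~\ref{valVTA} places the value in the designated set $D_{\mA}$. The guiding observation is that on the pure \ZF-fragment $\mathcal{L}_p(\tmA)$ the twist connectives behave classically on first coordinates: inspection of Definition~\ref{defKlfi1} yields $(z\,\tilde{\wedge}\,w)_1 = z_1 \wedge w_1$, $(z\,\tilde{\vee}\,w)_1 = z_1 \vee w_1$, $(z\,\tilde{\imp}\,w)_1 = z_1 \imp w_1$ and $(\tilde{\sneg} z)_1 = \sneg z_1$, while Definition~\ref{defint-twist} gives the first coordinates of $\forall$ and $\exists$ as $\bigwedge$ and $\bigvee$ in $\mA$. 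Since moreover all the structural tools already proved in this section---Theorem~\ref{basicVB} including the Leibniz rule~(vii), Theorem~\ref{BQ} for bounded quantifiers, the Mixing Lemma~\ref{mix-lem}, the Maximum Principle~\ref{MPL} together with Corollary~\ref{coroMPL}, and the existence of cores (Lemma~\ref{lemmacore})---are stated in terms of first coordinates, we have a complete twist-valued analogue of Bell's toolkit \cite{bell}, and the strategy is to follow his ZFC proofs axiom by axiom.

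Concretely, Extensionality reduces to unfolding $\termvalue{u\approx v}_1$ in Definition~\ref{defint-twist} and applying Theorem~\ref{basicVB}(vii). Pairing, Union, Power Set and Infinity are handled by constructing the same explicit witnesses $u_{\{x,y\}}$, $u_{\bigcup x}$, $u_{\mathcal{P}(x)}$, $u_\omega$ as in Bell but with constant function value $\mathbf{1}=(1,0)$ on their domains, and then computing the first coordinate of the defining formula via Theorem~\ref{BQ}. Separation, Replacement and Collection are dealt with by invoking Lemma~\ref{MPL} to produce a witness for every pure \ZF\ existential statement and using Corollary~\ref{coroMPL}(ii) to internalise it as a universally quantified implication of value $\mathbf{1}$. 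Foundation follows from the Induction Principle IP of Remark~\ref{IP}. The Axiom of Choice is obtained by choosing a core (Lemma~\ref{lemmacore}) for each nonempty set in a family and assembling a choice function through the Mixing Lemma~\ref{mix-lem}, exactly as in Bell. Finally, since the pure \ZF-fragment of \qlptz\ contains only classical connectives $\wedge,\vee,\imp,\sneg$ and the quantifiers, classical first-order logic is sound on it, so validity of the axioms immediately yields validity of all theorems of \ZFC.

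The main obstacle is not any single axiom but the bookkeeping: although the first coordinates of the twist connectives agree with the Boolean ones, the second coordinates---most notably $(z\,\tilde{\imp}\,w)_2 = z_1 \wedge w_2$---are not their Boolean duals, so one cannot literally copy Bell's arguments by mechanical substitution. The saving point is precisely that every lemma in this section has been stated for first coordinates only, so each Leibniz substitution, Maximum-Principle witness, or core enumeration in Bell's proof has a first-coordinate twist counterpart that preserves the relevant $(\cdot)_1$ equalities and inequalities. Tracking these carefully through each axiom delivers $(\termvalue{\varphi}^{{\bf V}^{\tmA}})_1 = 1$ for every axiom $\varphi$ of \ZFC, and hence ${\bf V}^{\tmA} \models \varphi$.
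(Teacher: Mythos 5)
Your proposal is correct and follows essentially the same route as the paper, which itself only states that the result is ``a relatively easy (but arduous) adaptation of the proof of~\cite[Theorem~1.33]{bell}, taking into account the auxiliary results obtained within this section''; you have simply spelled out what that adaptation consists of (first coordinates of the twist operations behave Boolean-classically on $\mathcal{L}_p(\tmA)$, and every auxiliary lemma is stated in terms of first coordinates). The only quibble is that Bell's witnesses for Union and Power Set do not take the constant value $\mathbf{1}$ on their domains (e.g.\ the Power Set witness assigns $\termvalue{x \subseteq u}$), but this does not affect the soundness of the overall strategy.
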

\begin{proof}
It is a relatively easy (but arduous) adaptation of the proof of~\cite[Theorem~1.33]{bell}, taking into account the auxiliary results obtained within this section, which are similar to the ones required in~\cite{bell}. 
\end{proof}

\section{Twist-valued models for $(\pst,\neg)$} \label{genPS3}

In this section the three-valued model for set theory introduced by L\"owe and Tarafder in~\cite{low:tar:15}  will be extended to a class of twist-valued models.

As observed in Section~\ref{twistm}, the three-valued logic $(\pst,\neg)$ (denoted as $(\pst,\ast)$ in~\cite{low:tar:15}) was already considered in~\cite{ConSil:14} under the name  \mpt. Indeed, this logic has been independenly proposed by different authors at  several times, and with different motivations.\footnote{As mentioned in Section~\ref{Intro-twist}, $\lfium_\circ$ is another presentation of this logic.}  For instance, the same logic was proposed in 1970 by  da Costa and D'Ottaviano's as \dacdot. It was reintroduced in 2000 by Carnielli, Marcos and de Amo as \lfium\,  and by Batens and De Clerq as the propositional fragment of the first-order logic {\bf CLuNs}, in  2014. As observed by Batens,  this logic was firstly proposed by Karl Sc\"utte in 1960 under the name $\Phi_v$  (see \cite{CC16} for  details and specific references).
Each of the three-valued algebras above is equivalent, up to language, to the   three-valued algebra of \L ukasiewicz three-valued logic \L$_3$. Hence, these logics are equivalent to \L$_3$ with $\{{\bf 1},{\bf \frac{1}{2}}\}$ as designated values. Moreover, as it was shown by Blok and Pigozzi in~\cite{blok-pig}, the class of algebraic models of \dacdot\ (and so  the class of twist structures for  \lptz) coincides with the agebraic models of \L ukasiewicz's three-valued logic $\L_3$.  More remarks about these three-valued equivalent logics can be found in~\cite{CC16}, Chapters~4 and~7.

As shown in~\cite[p.~407]{ConSil:14}, the implication $\Ra$ given by

\begin{center}
\begin{tabular}{|c||c|c|c|}
\hline
$\Ra$ & {\bf 1}  & ${\bf \frac{1}{2}}$  & {\bf 0} \\
 \hline \hline
     {\bf 1}    & {\bf 1}  & {\bf 1}  & {\bf 0}   \\ \hline
     ${\bf \frac{1}{2}}$  & {\bf 1}  & {\bf 1}  & {\bf 0}   \\ \hline
     {\bf 0}    & {\bf 1}  & {\bf 1}  & {\bf 1}   \\ \hline
\end{tabular}
\end{center}

\

\noindent
(which is the same implication  $\Ra$ of \pst\ and the primitive implication of \mpt) can be defined in the language of \lfium\ (hence in the language of \lptz) as follows: $\varphi \Ra \psi=_{def} \neg\sneg(\varphi \to \psi)$. From this, it is easy to adapt Definition~\ref{defKlfi1} of twist-structures for \lptz\ to  $(\pst,\neg)$ (see Definition~\ref{defKps3} below). Hence, the logic $(\pst,\neg)$ will be considered as defined over the signature $\Sigma_\Ra=\{\land,\lor,\Ra, \neg\}$. As observed in~\cite[pp.~395 and~407]{ConSil:14}, the strong negation $\sneg$ can be defined as $\sneg\varphi=_{def} \varphi \Ra \neg(\varphi \Ra \varphi)$, while  $\varphi \to \psi=_{def} \sneg\varphi \vee \psi$.

\begin{defi}  \label{defKps3}
Let \mA\ be a complete Boolean algebra, and let \tA\ as in Definition~\ref{twdom}. The  {\em twist structure for $(\pst,\neg)$ over \mA} is the algebra  $\tmzA=\langle \tA, \tilde{\wedge}, \tilde{\vee}, \tilde{\Ra},\tilde{\wneg} \rangle$ over $\Sigma_\Ra$  such that the operations $\tilde{\wedge}$, $\tilde{\vee}$ and $\tilde{\wneg}$ are defined as in Definition~\ref{defKlfi1}, and $\tilde{\Ra}$ is defined as follows, for every $(z_1,z_2),(w_1,w_2) \in \tA$:

\begin{itemize}
  \item[]  $(z_1,z_2)\,\tilde{\Ra}\, (w_1,w_2) = (z_1 \imp w_1,z_1 \wedge \sneg w_1)$.
\end{itemize}
\end{defi}

\

\noindent
By considering (as mentioned above) ${\sim}$ and ${\to}$ as derived connectives in \tmzA, it is clear that $\tilde{{\sim}}(z_1,z_2) = ({\sim}z_1,z_1)$  and $(z_1,z_2)\,\tilde{\to}\, (w_1,w_2) = (z_1 \imp w_1,z_1 \wedge w_2)$. Hence, the original operations of Definition~\ref{defKlfi1} can be recovered in \tmzA.

As it will be discussed below, we will adopt a technique different to the one used in~\cite{low:tar:15} in order to show the satisfaction of \ZFC\ in the twist-valued models based on \tmzA. However, it is interesting to observe that a nice property of $(\pst,\neg)$ is preserved by any \tmzA. Indeed, in~\cite{low:tar:15} the following notion of {\em reasonable implication algebras} was proposed in order to provide suitable lattice-valued for \ZF: 

\begin{defi} \label{reasimp} An algebra $\mA=\langle A,\land,\lor,\Ra,{\bf 0},{\bf 1}\rangle$ is an {\em reasonable implication algebra} if the reduct $\langle A,\land,\lor,{\bf 0},{\bf 1}\rangle$  is a complete lattice with bottom {\bf 0} and top {\bf 1}, and $\Ra$ is a binary operator satisfying the following, for every $z,w,u \in A$:

\begin{itemize}
\item[(P1)] \ \ $z \land w \leq u$ implies that $z \leq (w \Ra u)$;
\item[(P2)] \ \ $z \leq w$ implies that $(u \Ra z) \leq (u  \Ra w)$;
\item[(P3)] \ \ $z \leq w$ implies that $(w \Ra u) \leq (z  \Ra u)$.
\end{itemize} 
\end{defi}

\begin{prop}
For every complete Boolean algebra \mA, the twist structure \tmzA\ for  $(\pst,\neg)$ is a reasonable implication algebra such that ${\bf 0}=(0,1)$ and ${\bf 1}=(1,0)$.\footnote{To be rigorous, the $\neg$-less reduct of \tmzA\ expanded with {\bf 0} and {\bf 1} is a reasonable implication algebra.}
\end{prop}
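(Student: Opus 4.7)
The plan is to check the two clauses of Definition~\ref{reasimp} in turn. First I will verify the lattice part by appealing directly to Remark~\ref{TA-lat}: since $\mA$ is a complete Boolean algebra, the lattice reduct $\langle \tA, \tilde{\wedge}, \tilde{\vee}\rangle$ is a complete lattice under the order $z \leq w$ iff $z_1\leq w_1$ and $z_2\geq w_2$. Both $(0,1)$ and $(1,0)$ lie in \tA\ (since $0\vee 1 = 1$), and the formulas for $\tilde{\wedge}$ and $\tilde{\vee}$ make it immediate that $(0,1)$ is the bottom and $(1,0)$ the top. Note that this is the same order/lattice as for \tmA, so Remark~\ref{TA-lat} applies unchanged to \tmzA.

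Next I will verify (P1)--(P3). The crucial observation, used throughout, is that by Definition~\ref{twdom} every $z\in\tA$ satisfies $z_1\vee z_2=1$, hence $z_2\geq{\sim}z_1$. For (P1), assume $z\,\tilde{\wedge}\,w\leq u$, i.e.\ $z_1\wedge w_1\leq u_1$ and $u_2\leq z_2\vee w_2$. The first inequality and the adjunction in \mA\ give $z_1\leq w_1\imp u_1$, so $(z\,\tilde{\Ra}\,u)$'s first coordinate bounds $z_1$ from above. For the second coordinate of $w\,\tilde{\Ra}\,u$, namely $w_1\wedge{\sim}u_1 = {\sim}(w_1\imp u_1)$, I use the twist-domain constraint: $z_2\geq{\sim}z_1\geq{\sim}(w_1\imp u_1)=w_1\wedge{\sim}u_1$. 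Combining, $z\leq w\,\tilde{\Ra}\,u$.

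For (P2) and (P3), suppose $z\leq w$, so $z_1\leq w_1$ and $z_2\geq w_2$. These are pure calculations in \mA: monotonicity of $x\imp(\cdot)$ and antitonicity of $(\cdot)\imp x$ handle the first coordinates, while ${\sim}$ reverses order, handling the second coordinates (which have the form $u_1\wedge{\sim}(\cdot)$ or $(\cdot)\wedge{\sim}u_1$). No use of the constraint $z_1\vee z_2=1$ is needed here.

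The only genuinely nontrivial point is the second coordinate in (P1); everything else is routine Boolean algebra. That step is where the twist-domain condition $z_1\vee z_2=1$ (equivalently $z_2\geq{\sim}z_1$) is essential, which reflects the design choice behind $\tilde{\Ra}$, whose image always lies in the ``Boolean diagonal'' $\{(a,{\sim}a):a\in A\}$ of \tA.
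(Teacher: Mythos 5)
Your proof is correct and takes essentially the same route as the paper's: the decisive step in both is using the twist-domain constraint $z_1\vee z_2=1$ (equivalently $\sneg z_2\leq z_1$, or $z_2\geq\sneg z_1$) to obtain $z_2\geq\sneg(w_1\to u_1)=w_1\wedge\sneg u_1$ in the second coordinate of (P1), with (P2) and (P3) reduced to monotonicity/antitonicity of the Boolean implication. Your explicit verification of the complete-lattice reduct and of $(0,1)$, $(1,0)$ as bottom and top is a minor addition the paper leaves implicit.
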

\begin{proof}
Let $(z_1,z_2),(w_1,w_2), (u_1,u_2) \in \tA$.\\[1mm]
(P1): Assume that $(z_1,z_2) \,\tilde{\wedge}\, (w_1,w_2) \leq (u_1,u_2)$. That is, $(z_1 \wedge w_1,z_2 \vee w_2) \leq (u_1,u_2)$. Then $z_1 \wedge w_1 \leq u_1$ and $z_2 \vee w_2 \geq u_2$. From $z_1 \wedge w_1 \leq u_1$ it follows that $z_1 \leq w_1 \to u_1$. Besides, since $z_1 \vee z_2=1$ then $\sneg z_2 \leq z_1 \leq w_1 \to u_1$. Hence $z_2 \geq \sneg(w_1 \to u_1)= w_1 \wedge \sneg u_1$. From this, $(z_1,z_2) \leq (w_1 \to u_1, w_1 \wedge \sneg u_1)=(w_1,w_2) \,\tilde{\Ra}\, (u_1,u_2)$.\\[1mm]
(P2): Assume that $(z_1,z_2) \leq (w_1,w_2)$. Then $z_1 \leq w_1$, hence $u_1 \to z_1 \leq u_1 \to w_1$ and so $u_1 \land \sneg z_1=\sneg(u_1 \to z_1) \geq  \sneg(u_1 \to w_1)=u_1 \land \sneg w_1$. This means that  $(u_1,u_2) \,\tilde{\Ra}\, (z_1,z_2) \leq (u_1,u_2) \,\tilde{\Ra}\,  (w_1,w_2)$.\\[1mm]
(P3): It is proved analogously, but now taking into account that  $z_1 \leq w_1$ implies that $w_1 \to u_1 \leq z_1 \to u_1$.
\end{proof}

\

\noindent
Now, the three-valued model of set theory presented in~\cite{low:tar:15}  will be generalized to twist-valued models over any complete Boolean algebra. The structure  ${\bf V}^{\tmzA}$ is defined as the structure  ${\bf V}^{\tmA}$ given in Definition~\ref{vTAdef}. This does not come as a surprise, given that the domain of \tmA\ and \tmzA\ is the same, the set \tA. However, ${\bf V}^{\tmA}$  and ${\bf V}^{\tmzA}$ are different  as first-order structures, namely, the way in which the formulas are interpreted. The only difference, besides using different implications in the underlying logics, will be in the form in which the predicates $\pert$ and $\approx$ are interpreted. Thus, the twist truth-value $\termvalue{\varphi}^{{\bf V}^{\tmzA}}$ of a sentence $\varphi$ in ${\bf V}^{\tmzA}$ will be defined according to the recursive clauses in  Definition~\ref{defint-twist}, with the following difference: any occurrence of the operator $\tilde{\to}$ must be replaced by the operator $\tilde{\Ra}$ Note that the clause  interpreting $\sneg\varphi$ is now derived from the others, taking into account the observation after Definition~\ref{defKps3}.

In Theorem~\ref{modelZFCN} below it is stated that every twist-valued structure ${\bf V}^{\tmzA}$ is a model of \ZFC. This constitutes a generalization of~\cite[Corollary~11]{low:tar:15}. Indeed, instead of taking just a three-valued model (generated by the two-element Boolean algebra), we obtain a class of models, one for each complete Boolean algebra. Moreover, we also prove that these generalized models (including, of course, the original L\"owe-Tarafder model) satisfy, in addition, the Axiom of Choice.  

The proof of validity of \ZF\ given in~\cite[Corollary~11]{low:tar:15} is strongly based on the particularities of the three-valued algebra of $(\pst,\neg)$.\footnote{For instance, the fact that expressions like $\termvalue{u\approx v} \Ra \termvalue{u\pert w}$ can only take either the value {\bf 0} or {\bf 1} is used several times in~\cite{low:tar:15}. Observe that, in \tmzA, the value of $z \, \tilde{\Ra} \, w$ is always of the form $(a,\sneg a)$ for some $a \in |\mA|$. Hence $\termvalue{u\approx v}^{{\bf V}^{\tmzA}}$ is always of the form $(a,\sneg a)$ for some $a \in |\mA|$.} This forces us to adapt, to this setting, the proof for twist-valued models over \tmA\ given in the previous sections (which, by its turn, is adapted from the proof for Boolean-valued sets).
Such adaptations from \tmA\ to \tmzA\ are immediate, and all the results and definitions proposed in the previous sections work fine for \tmzA. Hence, we obtain the second main result of the paper:

\begin{teo} \label{modelZFCN}
All the axioms (hence all the theorems) of \ZFC, when restricted to pure \ZF-languages $\mathcal{L}_p(\tmA)$, are valid in  ${\bf V}^{\tmzA}$, for every \mA.
\end{teo}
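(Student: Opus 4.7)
The plan is to reduce the statement to Theorem~\ref{modelZFC} by exploiting a coincidence between the interpretation maps $\termvalue{\cdot}^{{\bf V}^{\tmA}}$ and $\termvalue{\cdot}^{{\bf V}^{\tmzA}}$ on the pure \ZF-language $\mathcal{L}_p(\tmA)$. Since validity in both twist-valued models is determined by $\termvalue{\varphi} \in D_\mA = \{(1,a) : a \in A\}$, it suffices to show that the first coordinates of the two interpretations agree on $\mathcal{L}_p(\tmA)$.

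The coincidence rests on the observation that the first coordinate of $(z_1,z_2)\,\tilde{\Ra}\,(w_1,w_2)$ is $z_1 \to w_1$, identical to the first coordinate of $(z_1,z_2)\,\tilde{\to}\,(w_1,w_2)$. The operations $\tilde{\wedge}$ and $\tilde{\vee}$ are literally the same in \tmA\ and \tmzA, and a short calculation using $\sneg\varphi =_{def} \varphi \Ra \neg(\varphi \Ra \varphi)$ and $\varphi \to \psi =_{def} \sneg\varphi \lor \psi$ shows that the derived $\tilde{\sneg}$ and $\tilde{\to}$ in \tmzA\ in fact recover the primitive operations of \tmA\ (as already noted after Definition~\ref{defKps3}). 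The coordinate-swapping $\tilde{\neg}$, which is the only operator that reads second coordinates into first coordinates, is excluded from $\mathcal{L}_p(\tmA)$ by Definition~\ref{def-langs}.

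I would then carry out, first, a simultaneous transfinite induction using the Induction Principle~(IP) to prove the atomic cases $(\termvalue{u \pert v}^{{\bf V}^{\tmzA}})_1 = (\termvalue{u \pert v}^{{\bf V}^{\tmA}})_1$ and $(\termvalue{u \approx v}^{{\bf V}^{\tmzA}})_1 = (\termvalue{u \approx v}^{{\bf V}^{\tmA}})_1$, reading off the defining clauses of Definition~\ref{defint-twist} and noting that every first coordinate is built from joins, meets, and implications of first coordinates of lower-rank atomic values. Second, a routine induction on formula complexity, treating the quantifier clauses via $\termvalue{\forall x \varphi}_1 = \bigwedge \termvalue{\varphi(u)}_1$ and $\termvalue{\exists x \varphi}_1 = \bigvee \termvalue{\varphi(u)}_1$, extends the agreement to every $\varphi \in \mathcal{L}_p(\tmA)$. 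Finally, Theorem~\ref{modelZFC} transports the validity of every \ZFC\ axiom from ${\bf V}^{\tmA}$ to ${\bf V}^{\tmzA}$. Alternatively, and closer to the author's remark, one can re-prove each of Lemma~\ref{lemregu} through Corollary~\ref{coro2MPL} verbatim in the \tmzA\ setting and then mimic the proof of Theorem~\ref{modelZFC}, since none of those arguments uses second-coordinate information.

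The main obstacle is ensuring that no step anywhere secretly consumes second-coordinate data, so that the first-coordinate reduction really works. The delicate case is the derived strong negation in \tmzA, whose definition passes through the coordinate-swapping $\tilde{\neg}$: the direct computation $(z_1,z_2)\,\tilde{\Ra}\,(z_1,z_2) = (1,0)$, then $\tilde{\neg}(1,0)=(0,1)$, then $(z_1,z_2)\,\tilde{\Ra}\,(0,1) = (\sneg z_1, z_1)$, confirms that $\tilde{\sneg}$ in \tmzA\ does yield $\sneg z_1$ in its first coordinate and thus matches the primitive $\tilde{\sneg}$ of \tmA. Once this verification is done---and the analogous check is carried out for the derived $\tilde{\to}$---the remaining adaptations to the proof of Theorem~\ref{modelZFC}, including the argument for the Axiom of Choice, are mechanical.
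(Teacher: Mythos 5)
Your proposal is correct, but your primary route is genuinely different from the one the paper takes. The paper offers no reduction: it simply asserts that the whole chain of auxiliary results (Lemma~\ref{lemregu} through Corollary~\ref{coro2MPL} and the proof of Theorem~\ref{modelZFC}) can be redone verbatim with $\tilde{\Ra}$ in place of $\tilde{\to}$ --- which is exactly your ``alternative'' second route. Your main argument instead factors the theorem through Theorem~\ref{modelZFC} by proving that $(\termvalue{\varphi}^{{\bf V}^{\tmzA}})_1 = (\termvalue{\varphi}^{{\bf V}^{\tmA}})_1$ for every sentence $\varphi$ of $\mathcal{L}_p(\tmA)$, using the facts that $\tilde{\to}$ and $\tilde{\Ra}$ share the first-coordinate formula $z_1 \to w_1$, that the derived $\tilde{\sneg}$ and $\tilde{\to}$ of \tmzA\ recover the primitive operations of \tmA, and that $\tilde{\neg}$ --- the only operation feeding a second coordinate into a first --- is barred from $\mathcal{L}_p(\tmA)$. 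This is sound: the atomic clauses of Definition~\ref{defint-twist} compute first coordinates from first coordinates only, so the double transfinite induction for $\pert$ and $\approx$ (which, as in Bell, must run on pairs of ranks rather than on a single rank) and the subsequent induction on complexity go through, and since validity is membership in $D_\mA=\{(1,a): a\in A\}$, validity of every \ZFC\ axiom transfers from ${\bf V}^{\tmA}$ to ${\bf V}^{\tmzA}$. Your reduction buys economy and an explanation of \emph{why} the adaptation is immediate (the two models literally agree on the data that determines validity of pure formulas); the paper's wholesale re-derivation buys the auxiliary machinery (Mixing Lemma, Maximum Principle, cores) stated natively for ${\bf V}^{\tmzA}$, which is of independent use. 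Your verification of the delicate point --- that the derived $\tilde{\sneg}$ passes through $\tilde{\neg}$ only applied to the constant $(1,0)$, hence never consumes genuine second-coordinate information --- is exactly the check the reduction needs.
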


\begin{obs}
Oberve that, in~\cite[Corollary~11]{low:tar:15}, it was proved that \pst\ is a model of \ZF, not of \ZFC. Thus, Theorem~\ref{modelZFCN} improves the above mentioned result in two ways: it is generalized to arbitary Boolean algebras and, in addition, it proves that  the Axiom of Choice AC is also satisfied by all that models, including the original three-valued structure \pst.
\end{obs}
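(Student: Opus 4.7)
The plan is to reduce Theorem~\ref{modelZFCN} to Theorem~\ref{modelZFC} by showing that, on pure \ZF-formulas, the first coordinate of the twist truth-value is insensitive to whether we use the implication $\tilde{\to}$ of \tmA\ or the implication $\tilde{\Ra}$ of \tmzA. The key observation is that, comparing Definitions~\ref{defKlfi1} and~\ref{defKps3}, the first coordinate of $(z_1,z_2)\,\tilde{\Ra}\,(w_1,w_2)$ is $z_1 \imp w_1$, which is exactly the first coordinate of $(z_1,z_2)\,\tilde{\to}\,(w_1,w_2)$. The same holds for $\tilde{\land}$, $\tilde{\lor}$, the derived $\tilde{\sneg}(z_1,z_2) = (\sneg z_1,z_1)$, and (clause by clause of Definition~\ref{val}) for the quantifiers: each such connective determines its first coordinate purely from the first coordinates of its inputs.

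The first step is to prove, by simultaneous induction on rank using the Induction Principle (Remark~\ref{IP}), that for every $u,v \in {\bf V}^{\tmzA} = {\bf V}^{\tmA}$,
\[
(\termvalue{u \pert v}^{{\bf V}^{\tmzA}})_1 = (\termvalue{u \pert v}^{{\bf V}^{\tmA}})_1 \quad \text{and} \quad (\termvalue{u \approx v}^{{\bf V}^{\tmzA}})_1 = (\termvalue{u \approx v}^{{\bf V}^{\tmA}})_1.
\]
This is immediate from the key observation applied to the recursive clauses in Definition~\ref{defint-twist}, recalling that in the ${\bf V}^{\tmzA}$-case the occurrences of $\tilde{\to}$ are replaced by $\tilde{\Ra}$. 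A second step extends the identity to every formula $\varphi$ of $\mathcal{L}_p(\tmA)$ by induction on complexity, using once more that every tilde-connective and quantifier occurring in the pure \ZF-language has a first coordinate depending only on the first coordinates of its inputs.

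Since a sentence is valid in a twist-valued model precisely when its twist truth-value lies in $D_\mA$, i.e., has first coordinate~$1$, the previous step shows that a pure \ZF-sentence is valid in ${\bf V}^{\tmzA}$ if and only if it is valid in ${\bf V}^{\tmA}$. Applying Theorem~\ref{modelZFC} then delivers the validity of all \ZFC-axioms in ${\bf V}^{\tmzA}$ for every complete Boolean algebra \mA; in particular, the Axiom of Choice transfers, giving the improvement over~\cite[Corollary~11]{low:tar:15} highlighted elsewhere in the paper.

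The main obstacle is the bookkeeping in the two inductions: one must carefully verify that no clause of Definition~\ref{defint-twist} or Definition~\ref{val} reintroduces a dependence on second coordinates, and that the connectives derived inside $\tmzA$ (notably $\tilde{\sneg}$ and $\tilde{\to}$) agree with their counterparts in $\tmA$ on first coordinates, which reduces to standard Boolean identities such as $\sneg z_1 \lor w_1 = z_1 \imp w_1$. Once this check is done, the desired conclusion follows automatically from Theorem~\ref{modelZFC}.
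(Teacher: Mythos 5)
Your argument is correct, but it proceeds by a genuinely different route than the paper. The paper's strategy for Theorem~\ref{modelZFCN} is to re-run the entire development of Section~\ref{twist-ZFC} for the structures ${\bf V}^{\tmzA}$: it claims that the analogues of Theorem~\ref{basicVB}, Lemmas~\ref{lemaBQ} and~\ref{MPL}, Theorem~\ref{BQ}, etc., all go through with $\tilde{\to}$ replaced by $\tilde{\Ra}$, and then repeats the adaptation of Bell's proof. You instead prove a transfer principle: since the first coordinate of $(z_1,z_2)\,\tilde{\Ra}\,(w_1,w_2)$ is $z_1 \imp w_1$, identical to that of $\tilde{\to}$, and since every connective and quantifier of the pure \ZF-language (including the derived $\tilde{\sneg}$, but crucially excluding $\tilde{\neg}$, the one operation whose first output coordinate reads a second input coordinate) computes its first coordinate from first coordinates alone, a rank induction for $\pert$ and $\approx$ followed by an induction on complexity gives $(\termvalue{\varphi}^{{\bf V}^{\tmzA}})_1 = (\termvalue{\varphi}^{{\bf V}^{\tmA}})_1$ for all $\varphi \in \mathcal{L}_p(\tmA)$; as validity means first coordinate $1$, Theorem~\ref{modelZFC} transfers verbatim, AC included. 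Your reduction is cleaner and arguably more informative: it isolates exactly why the choice of implication is irrelevant to the satisfaction of pure \ZF-axioms (only the second coordinates, i.e.\ the paraconsistent behaviour, differ), and it spares one from re-checking the auxiliary lemmas for $\tilde{\Ra}$. What the paper's approach buys in exchange is a self-contained toolkit (Maximum Principle, cores, bounded quantification) stated directly for ${\bf V}^{\tmzA}$, which is reusable for statements outside the pure language, where your transfer principle is silent. Both arguments establish the two improvements asserted in the remark: generalization to arbitrary complete Boolean algebras, and validity of AC in all these models, including the original three-valued L\"owe--Tarafder structure obtained from $\mathbb{A}_2$.
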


\section{ \ZFlp\ as a paraconsistent set theory} \label{sect-ZFparacon}

After proving that the two classes of twist-valued models proposed here are models of \ZFC, in this section the paraconsistent character of both classes of  models will be investigated. It will be shown that twist-valued models over \tmA\ (that is, over the logic \lptz) are ``more paraconsistent'' that the ones over \tmzA\ (that is, defined over  $(\pst,\neg)$).

Recall from Theorem~\ref{basicVB}(i) that $\termvalue{u\approx u} \in D_\mA$ for every $u$ in every twist-valued model  ${\bf V}^{\tmA}$. The interesting fact of \ZFlp\  is that it allows ``inconsistent'' sets, that is, elements of  ${\bf V}^{\tmA}$ such that the value of $(u \not\approx u)$ is also designated. Observe that ${\bf 1}=(1,0)$, ${\bf \frac{1}{2}}=(1,1)$ and ${\bf 0}=(0,1)$ are defined in every \tmA.  Since $z \in D_\mA$ iff $z=(1,a)$ for some $a \in A$ it follows that   ${\bf \frac{1}{2}} \leq z$ for every $z \in D_\mA$ (recalling the partial order for $\tmA$  considered in Remark~\ref{TA-lat}).
 
\begin{prop} \label{u-incons}
There exists $u \in {\bf V}^{\tmA}$ such that $\termvalue{u\approx u}={\bf \frac{1}{2}}$.
\end{prop}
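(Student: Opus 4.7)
The plan is to exhibit a specific element $u \in {\bf V}^{\tmA}$ built from the empty set element and verify the value of $\termvalue{u \approx u}$ by direct computation. Since Theorem~\ref{basicVB}(i) already guarantees $\termvalue{u \approx u}_1 = 1$ for every $u$, the only issue is to force the second coordinate to equal $1$ as well, so that $\termvalue{u \approx u} = (1,1) = {\bf \frac{1}{2}}$.

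Concretely, let $\varnothing$ denote the empty function in ${\bf V}^{\tmA}$ (the element with $dom(\varnothing) = \emptyset$), and define $u$ to be the function with $dom(u) = \{\varnothing\}$ and $u(\varnothing) = {\bf \frac{1}{2}} = (1,1)$. Clearly $u \in {\bf V}^{\tmA}$. Expanding Definition~\ref{defint-twist} for $\termvalue{u \approx u}$, both large conjuncts coincide, and the result simplifies to
\[
\termvalue{u \approx u} \;=\; \Big(\bigwedge_{x \in dom(u)} \big((u(x))_1 \to \termvalue{x \pert u}_1\big),\; \bigvee_{x \in dom(u)} \big((u(x))_1 \land \termvalue{x \pert u}_2\big)\Big).
\]
So the computation reduces to evaluating $\termvalue{\varnothing \pert u}$.

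First I would compute $\termvalue{\varnothing \approx \varnothing}$: since $dom(\varnothing) = \emptyset$, both meets are empty, so $\termvalue{\varnothing \approx \varnothing} = {\bf 1} = (1,0)$. Then
\[
\termvalue{\varnothing \pert u} \;=\; u(\varnothing) \,\tilde{\land}\, \termvalue{\varnothing \approx \varnothing} \;=\; (1,1)\,\tilde{\land}\,(1,0) \;=\; (1,1) \;=\; {\bf \tfrac{1}{2}}.
\]
Substituting this back, the first coordinate of $\termvalue{u \approx u}$ is $1 \to 1 = 1$ (as predicted by Theorem~\ref{basicVB}(i)), and the second coordinate is $(u(\varnothing))_1 \land \termvalue{\varnothing \pert u}_2 = 1 \land 1 = 1$. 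Hence $\termvalue{u \approx u} = (1,1) = {\bf \frac{1}{2}}$, as required.

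No step is genuinely hard here — the proposition is just a witnessing claim, and the computation is a short unfolding of Definitions~\ref{vTAdef}, \ref{defKlfi1}, and~\ref{defint-twist}. The only subtlety is choosing $u(\varnothing) = {\bf \frac{1}{2}}$ rather than ${\bf 1}$: this is exactly what enables the inconsistent-membership contribution $\termvalue{\varnothing \pert u}_2 = 1$ that raises the second coordinate of $\termvalue{u \approx u}$ from $0$ to $1$. It is worth noting that the same argument, read off the modified clauses described for ${\bf V}^{\tmzA}$, does \emph{not} produce an analogous inconsistent self-identity under the implication $\tilde{\Ra}$ of $(\pst,\neg)$, which is precisely the sense in which \lptz\ is ``more paraconsistent'' as anticipated in the surrounding discussion.
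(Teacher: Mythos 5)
Your proof is correct and follows essentially the same route as the paper: the paper takes $u=\{\langle w,{\bf \frac{1}{2}}\rangle\}$ for an \emph{arbitrary} $w\in {\bf V}^{\tmA}$ and uses only $\termvalue{w\approx w}\in D_\mA$, whereas you instantiate $w=\varnothing$ and compute $\termvalue{\varnothing\approx\varnothing}={\bf 1}$ explicitly — a harmless specialization of the identical construction and computation.
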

\begin{proof}
Let $w$ be any element of  ${\bf V}^{\tmA}$, and let $u=\{\langle w,{\bf \frac{1}{2}}\rangle\}$. Since $\termvalue{w\approx w}  \in D_\mA$ then 
$\termvalue{w\pert u}=u(w) \,\tilde{\wedge}\, \termvalue{w\approx w} = {\bf \frac{1}{2}}\,\tilde{\wedge}\, \termvalue{w\approx w}={\bf \frac{1}{2}}$.
From this,
$\termvalue{u\approx u}=u(w) \,\tilde{\to}\, \termvalue{w\pert u}= {\bf \frac{1}{2}}\,\tilde{\to}\,  {\bf \frac{1}{2}} =  {\bf \frac{1}{2}}$.
\end{proof}

\

\noindent
From the last result it can be proven that \ZFlp\ is strongly paraconsistent, in the sense that there is a contradiction which is valid in the logic:

\begin{coro} Let $\sigma=\forall x (x\approx x)$. Then ${\bf V}^{\tmA} \models \sigma \land \neg\sigma$.
\end{coro}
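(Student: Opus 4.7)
The plan is to compute $\termvalue{\sigma \land \neg\sigma}^{{\bf V}^{\tmA}}$ explicitly and verify its first coordinate is $1$, so that it lies in $D_\mA$. First I would unfold $\termvalue{\sigma}^{{\bf V}^{\tmA}}$ using the quantifier clause of Definition~\ref{defint-twist}: it equals $\bigl(\bigwedge_{u\in{\bf V}^{\tmA}} \termvalue{u\approx u}_1,\ \bigvee_{u\in{\bf V}^{\tmA}} \termvalue{u\approx u}_2\bigr)$. For the first coordinate, Theorem~\ref{basicVB}(i) immediately gives $\termvalue{u\approx u}_1 = 1$ for every $u$, so the meet is $1$. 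This already shows ${\bf V}^{\tmA}\models\sigma$.

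Next I would use Proposition~\ref{u-incons}, which furnishes some $u_0\in{\bf V}^{\tmA}$ with $\termvalue{u_0\approx u_0} = {\bf \tfrac{1}{2}} = (1,1)$. In particular $\termvalue{u_0\approx u_0}_2 = 1$, so the join in the second coordinate of $\termvalue{\sigma}$ is forced to be $1$ as well. Hence $\termvalue{\sigma}^{{\bf V}^{\tmA}} = (1,1) = {\bf \tfrac{1}{2}}$.

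To finish, apply the clauses for $\tilde{\neg}$ and $\tilde{\wedge}$ from Definition~\ref{defKlfi1}: $\termvalue{\neg\sigma} = \tilde{\neg}(1,1) = (1,1)$, and then $\termvalue{\sigma\land\neg\sigma} = (1,1)\,\tilde{\wedge}\,(1,1) = (1\wedge 1,\, 1\vee 1) = (1,1)$, whose first coordinate is $1$ and therefore belongs to $D_\mA$ by Definition~\ref{def-semKlfi1}. By Definition~\ref{valVTA} this yields ${\bf V}^{\tmA}\models \sigma\land\neg\sigma$, as required.

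I do not anticipate a real obstacle: the only non-trivial ingredient is Proposition~\ref{u-incons}, which hands us an "inconsistent" witness $u_0$, and without that witness the second coordinate of $\termvalue{\sigma}$ could in principle collapse to $0$, turning $\termvalue{\neg\sigma}$ into $(0,1)\notin D_\mA$. Once that witness is in hand, everything else is a mechanical unfolding of the twist operations on the pair $(1,1)$.
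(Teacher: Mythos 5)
Your proof is correct and follows essentially the same route as the paper's: both rest on Theorem~\ref{basicVB}(i) to get the first coordinate equal to $1$ and on the inconsistent witness from Proposition~\ref{u-incons} to force the second coordinate to $1$, concluding $\termvalue{\sigma}={\bf \frac{1}{2}}$ and hence $\termvalue{\sigma\land\neg\sigma}={\bf \frac{1}{2}}\in D_\mA$. The only cosmetic difference is that the paper phrases the computation as a sandwich ${\bf \frac{1}{2}}\leq\termvalue{\sigma}\leq\termvalue{u\approx u}={\bf \frac{1}{2}}$ in the lattice order, while you compute the two coordinates directly.
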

\begin{proof}
Let ${\bf V}^{\tmA}$ be a twist-valued model  for \ZFlp. As observed above,  ${\bf \frac{1}{2}} \leq z$ for every $z \in D_\mA$. By Theorem~\ref{basicVB}(i), $\termvalue{v\approx v} \in D_\mA$ for every $v$ in ${\bf V}^{\tmA}$ and so ${\bf \frac{1}{2}} \leq \termvalue{v\approx v}$ for every $v$, that is, 
${\bf \frac{1}{2}} \leq \termvalue{\forall x(x\approx x)}$, by Definition~\ref{defint-twist}. On the other hand, $\termvalue{\forall x(x\approx x)} \leq \termvalue{u\approx u} = {\bf \frac{1}{2}}$ for $u$ as in Proposition~\ref{u-incons}. This shows that $\termvalue{\sigma}=\termvalue{\forall x(x\approx x)}={\bf \frac{1}{2}}$ and so $\termvalue{\neg\sigma}=\tilde{\neg}\,\termvalue{\sigma}={\bf \frac{1}{2}}$. Hence $\termvalue{\sigma \wedge \neg\sigma}=\termvalue{\sigma} \,\tilde{\wedge}\,\termvalue{\neg\sigma} = {\bf \frac{1}{2}}$, a designated value.
\end{proof}

\

\noindent
Since  the extensionality axiom of \ZF\ is satisfied by every twist-valued model ${\bf V}^{\tmA}$  for \ZFlp, $\termvalue{u\approx v} \in  D_\mA$ iff $u$ and $v$ have the same elements, that is: for every $w$ in ${\bf V}^{\tmA}$, $\termvalue{w\pert u} \in  D_\mA$ iff $\termvalue{w\pert v} \in  D_\mA$. However, nothing guarantees that $u$ and $v$ will have the same `non-elements', namely: it could be possible that  $\termvalue{\neg(w\pert u)} \in  D_\mA$ but $\termvalue{\neg(w\pert v)} \notin  D_\mA$, for some $w$ in ${\bf V}^{\tmA}$, even when $\termvalue{u\approx v} \in  D_\mA$. Given such $w$, consider the property $\varphi(x):=\neg(w \pert x)$, meaning that ``$w$ is a non-element of $x$''. Then, this  situation shows that ${\bf V}^{\tmA} \not\models ((u \approx v) \wedge \varphi(u)) \to \varphi(v)$, which constitutes a violation of the  Leibniz rule for the equality predicate $\approx$ in \ZFlp.

\begin{teo} \label{fail-leibniz}
The formula $\varphi(x):=\neg(w \pert x)$ is such that the Leibniz rule fails for it in every ${\bf V}^{\tmA}$, namely:
${\bf V}^{\tmA}\not\models \forall x\forall y((x\approx y) \land \varphi(x) \to \varphi(y))$.
\end{teo}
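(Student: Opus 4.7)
The plan is to exhibit explicit witnesses $w, u, v \in {\bf V}^{\tmA}$ such that the instance $((u \approx v) \wedge \neg(w \pert u)) \tilde{\to} \neg(w \pert v)$ receives a non-designated twist-value, which by the semantics of the universal quantifier in Definition~\ref{defint-twist} forces $\termvalue{\forall x\forall y((x\approx y) \land \varphi(x) \to \varphi(y))}$ to be non-designated. Concretely, building on Proposition~\ref{u-incons}, I would take $w = \varnothing$ (the empty function, viewed as an element of ${\bf V}^{\tmA}$), and set
\[
u = \{\langle w, \tfrac{1}{2}\rangle\}, \qquad v = \{\langle w, {\bf 1}\rangle\}.
\]
Intuitively, $u$ and $v$ have the same underlying element in their domain (namely $w$), but $w$ belongs to $u$ only \emph{inconsistently}, while it belongs to $v$ classically.

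The next step is a direct calculation using Definition~\ref{defint-twist}. Since $w = \varnothing$ has empty domain, the empty meets collapse to give $\termvalue{w \approx w} = {\bf 1}=(1,0)$. Unfolding the interpretation of $\pert$ yields
\[
\termvalue{w \pert u} = u(w)\,\tilde{\wedge}\,\termvalue{w\approx w} = \tfrac{1}{2}\,\tilde{\wedge}\,{\bf 1} = \tfrac{1}{2}, \qquad \termvalue{w \pert v} = {\bf 1}\,\tilde{\wedge}\,{\bf 1}={\bf 1}.
\]
Therefore $\termvalue{\neg(w \pert u)} = \tilde{\neg}\tfrac{1}{2} = \tfrac{1}{2} \in D_{\mA}$, whereas $\termvalue{\neg(w \pert v)} = \tilde{\neg}{\bf 1} = {\bf 0} \notin D_{\mA}$. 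So the antecedent $\varphi(u)$ is designated while the consequent $\varphi(v)$ is not.

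It then remains to check that $\termvalue{u \approx v} \in D_{\mA}$, so that the whole implication actually witnesses a genuine failure of substitutivity (and not a vacuous one). Unfolding the definition of $\approx$ for these particular $u,v$ (each with singleton domain $\{w\}$), one computes
\[
u(w)\,\tilde{\to}\,\termvalue{w\pert v} = \tfrac{1}{2}\,\tilde{\to}\,{\bf 1}={\bf 1}, \qquad v(w)\,\tilde{\to}\,\termvalue{w\pert u} = {\bf 1}\,\tilde{\to}\,\tfrac{1}{2}=\tfrac{1}{2},
\]
so that $\termvalue{u \approx v} = {\bf 1}\,\tilde{\wedge}\,\tfrac{1}{2} = \tfrac{1}{2}$, which is designated. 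Combining everything:
\[
\termvalue{((u\approx v)\land \varphi(u))\to \varphi(v)} = (\tfrac{1}{2}\,\tilde{\wedge}\,\tfrac{1}{2})\,\tilde{\to}\,{\bf 0} = \tfrac{1}{2}\,\tilde{\to}\,{\bf 0}={\bf 0},
\]
so in particular its first coordinate is $0$, and hence the universal closure is non-designated.

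The main conceptual point, rather than the main obstacle, is to realize that the failure of Leibniz is driven by the fact that $\approx$ in \ZFlp\ only tracks \emph{extensions} (via the underlying $\tilde{\to}$ acting on first coordinates), while the paraconsistent negation $\tilde{\neg}$ swaps coordinates and therefore probes the \emph{anti-extension}; the construction above simply exploits this asymmetry by making $u$ and $v$ share a common extension but have different anti-extensions at $w$. The arithmetic is entirely routine given Definitions~\ref{defKlfi1} and~\ref{defint-twist}; no appeal to results beyond Theorem~\ref{basicVB}(i) (used only to read off $\termvalue{w\approx w}={\bf 1}$ for $w=\varnothing$) is required.
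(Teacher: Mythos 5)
Your proof is correct and follows essentially the same route as the paper's: exhibit $u=\{\langle w,{\bf \frac{1}{2}}\rangle\}$ and $v=\{\langle w,{\bf 1}\rangle\}$, compute $\termvalue{u\approx v}={\bf \frac{1}{2}}$, $\termvalue{\varphi(u)}={\bf \frac{1}{2}}$, $\termvalue{\varphi(v)}={\bf 0}$, and conclude that the implication gets value ${\bf 0}$. The only difference is that you take $w=\varnothing$ while the paper takes $w=\{\langle\varnothing,{\bf 1}\rangle\}$; this is immaterial, since in both cases one verifies directly that $\termvalue{w\approx w}={\bf 1}=(1,0)$ (which, as you implicitly use, is needed in full --- not just its first coordinate --- to get $\termvalue{w\pert v}={\bf 1}$).
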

\begin{proof}
Let ${\bf V}^{\tmA}$ be a twist-valued model  for \ZFlp, and let $\varnothing$ be the empty element of ${\bf V}^{\tmA}$. Observe that  $w=\{\langle\varnothing,{\bf 1}\rangle\}$, $u=\{\langle w,{\bf \frac{1}{2}}\rangle\}$ and $v=\{\langle w,{\bf 1}\rangle\}$ belong to every model ${\bf V}^{\tmA}$. Now, $\termvalue{\varnothing\pert w} = w(\varnothing)\,\tilde{\wedge}\,\termvalue{\varnothing\approx \varnothing} = {\bf 1} \,\tilde{\wedge}\,{\bf 1}={\bf 1}$. From this, $\termvalue{w\approx w} = w(\varnothing) \,\tilde{\to}\, \termvalue{\varnothing\pert  w} = {\bf 1} \,\tilde{\to}\, {\bf 1}={\bf 1}$ and so $\termvalue{w\pert  u} = u(w) \,\tilde{\wedge}\, \termvalue{w\approx w} = {\bf \frac{1}{2}} \,\tilde{\wedge}\, {\bf 1}= {\bf \frac{1}{2}}$.  On the other hand, $\termvalue{w\pert  v} = v(w) \,\tilde{\wedge}\, \termvalue{w\approx w} = {\bf 1} \,\tilde{\wedge}\, {\bf 1}= {\bf 1}$. This implies that $\termvalue{u\approx v} = (u(w) \,\tilde{\to}\, \termvalue{w\pert  v}) \,\tilde{\wedge}\,(v(w) \,\tilde{\to}\, \termvalue{w\pert  u}) = ({\bf \frac{1}{2}} \,\tilde{\to}\, {\bf 1}) \,\tilde{\wedge}\,({\bf 1} \,\tilde{\to}\, {\bf \frac{1}{2}}) = {\bf \frac{1}{2}}$.

But $\termvalue{\varphi(u)}=\termvalue{\neg(w\pert  u)} = \tilde{\neg}\, \termvalue{w\pert  u} = \tilde{\neg}\, {\bf \frac{1}{2}}={\bf \frac{1}{2}}$ and $\termvalue{\varphi(v)}=\termvalue{\neg(w\pert  v)} = \tilde{\neg}\, \termvalue{w\pert  v} = \tilde{\neg}\, {\bf 1}={\bf 0}$. Thus,  $\termvalue{((u \approx v) \wedge \varphi(u)) \to \varphi(v)}=({\bf \frac{1}{2}} \,\tilde{\wedge}\, {\bf \frac{1}{2}})\,\tilde{\to}\, {\bf 0}={\bf 0}$, which implies that ${\bf V}^{\tmA}\not\models \forall x\forall y((x\approx y) \land \varphi(x) \to \varphi(y))$.
\end{proof} 

\

\noindent It is important to observe that the failure of the Leiniz rule in ${\bf V}^{\tmA}$ shown in Theorem~\ref{fail-leibniz} does not contradict Theorem~\ref{basicVB}(viii): indeed, what Theorem~\ref{basicVB}(viii) states is the validity of the Leibniz rule in ${\bf V}^{\tmA}$ for every formula $\varphi(x)$ in the pure \ZF-language $\mathcal{L}_p(\tmA)$. On the other hand, the formula  $\varphi(x)$ found in Theorem~\ref{fail-leibniz} which violates the Leibniz rule in ${\bf V}^{\tmA}$  contains an occurrence of the paraconsistent negation $\neg$, that is, it does not belong to $\mathcal{L}_p(\tmA)$. In that example, two sets which are equal have different `non-elements', where `non'  refers to the paraconsistent negation $\neg$. 

Besides the failure of the Leibniz rule for the full language, \ZFlp\ does not validate the so-called bounded quantification properties. 

\begin{defi} For any formula $\varphi$ and every $u \in {\bf V}^{\tmA}$, the {\em universal bounded quantification property} $UBQ_\varphi^u$ and the {\em existential bounded quantification property} $EBQ_\varphi^u$ are defined as follows:

\begin{itemize}
\item[] $(UBQ_\varphi^u)$ \ \ 
$\termvalue{\forall x(x \pert u \to \varphi(x))}_1=\bigwedge_{x \in dom(u)} ((u(x))_1 \to \varphi(x))$
\item[] $(EBQ_\varphi^u)$ \ \  $\termvalue{\exists x (x \pert u \, \wedge \, \varphi(x))}_1 = \bigvee_{x \in dom(u)} ((u(x))_1 \wedge \termvalue{\varphi(x)}_1)$
\end{itemize}
\end{defi}

By simplicity, formulas on the left-hand size of $UBQ_\psi^u$ and  $EBQ_\varphi^u$ will be written as $\termvalue{\forall x \pert u \, \varphi(x)}_1$ and $\termvalue{\exists x \pert u \, \varphi(x)}_1$, respectively.

By adapting the proof of~\cite[Corollary~1.18]{bell} it can be proven the following:

\begin{teo}
For any negation-free formula $\varphi$ (i.e., $\varphi \in \mathcal{L}_p(\tmA)$) and every $u \in {\bf V}^{\tmA}$, the bounded quantification properties  $UBQ_\varphi^u$ and  $EBQ_\varphi^u$   hold in ${\bf V}^{\tmA}$. 
\end{teo}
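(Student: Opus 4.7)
The plan is to mimic the Boolean-valued proof of Bell's Corollary~1.18 at the level of first coordinates of twist-values, leveraging the fact that the first-coordinate projections of the twist operations $\tilde{\wedge}$, $\tilde{\vee}$, $\tilde{\to}$ (Definition~\ref{defKlfi1}) coincide with the corresponding Boolean operations on \mA. The negation-freeness hypothesis enters at exactly one point: invoking the Leibniz rule, i.e.\ Theorem~\ref{basicVB}(vii), which is guaranteed only for $\varphi \in \mathcal{L}_p(\tmA)$.

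For $EBQ_\varphi^u$, I would begin by unfolding
\[
\termvalue{\exists x(x\pert u \wedge \varphi(x))}_1 \;=\; \bigvee_{y \in {\bf V}^{\tmA}} \bigl(\termvalue{y\pert u}_1 \wedge \termvalue{\varphi(y)}_1\bigr).
\]
The inequality $\geq$ is immediate by restricting the supremum to $y = x$ ranging over $dom(u)$ and applying Theorem~\ref{basicVB}(ii), which gives $(u(x))_1 \leq \termvalue{x \pert u}_1$. For the reverse, I would expand $\termvalue{y \pert u}_1 = \bigvee_{x \in dom(u)} ((u(x))_1 \wedge \termvalue{x \approx y}_1)$ via Definition~\ref{defint-twist}, distribute over $\termvalue{\varphi(y)}_1$ using Boolean distributivity in \mA, and then apply Theorem~\ref{basicVB}(vii) in the form $\termvalue{x \approx y}_1 \wedge \termvalue{\varphi(y)}_1 \leq \termvalue{\varphi(x)}_1$ to bound each disjunct by $(u(x))_1 \wedge \termvalue{\varphi(x)}_1$ for some $x \in dom(u)$.

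For $UBQ_\varphi^u$, the key observation is that the first coordinate of $(a_1,a_2) \,\tilde{\to}\, (b_1,b_2)$ is $a_1 \to b_1$, so
\[
\termvalue{\forall x(x \pert u \to \varphi(x))}_1 \;=\; \bigwedge_{y \in {\bf V}^{\tmA}} \bigl(\termvalue{y \pert u}_1 \to \termvalue{\varphi(y)}_1\bigr).
\]
The direction $\leq$ follows by restricting the infimum to $y = x \in dom(u)$ and using antitonicity of $\to$ in its first argument together with Theorem~\ref{basicVB}(ii). For $\geq$, let $a := \bigwedge_{x \in dom(u)} \bigl((u(x))_1 \to \termvalue{\varphi(x)}_1\bigr)$; by the adjunction between $\wedge$ and $\to$ in \mA, it suffices to verify $a \wedge \termvalue{y \pert u}_1 \leq \termvalue{\varphi(y)}_1$ for each $y \in {\bf V}^{\tmA}$. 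Distributing $a$ through the supremum defining $\termvalue{y \pert u}_1$, applying modus ponens in \mA\ inside each disjunct, and closing with the Leibniz step $\termvalue{x \approx y}_1 \wedge \termvalue{\varphi(x)}_1 \leq \termvalue{\varphi(y)}_1$ yields the required bound.

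The only genuine obstacle is ensuring the Leibniz step is legitimate, and this is exactly why negation-freeness of $\varphi$ is imposed: outside $\mathcal{L}_p(\tmA)$ it fails, as witnessed by Theorem~\ref{fail-leibniz}. All other manipulations are routine Boolean identities on first coordinates, so the whole argument is essentially a transcription of Bell's proof pushed through the first-coordinate projection.
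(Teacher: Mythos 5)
Your proposal is correct and follows essentially the same route as the paper, which proves this statement (already as Theorem~\ref{BQ}) by adapting Bell's Corollary~1.18 using Theorem~\ref{basicVB} items~(ii) and~(vii) together with Lemma~\ref{lemaBQ}; your argument simply inlines the content of that lemma via the distributivity-plus-Leibniz computation on first coordinates. You also correctly identify that the negation-freeness hypothesis is needed exactly and only for the Leibniz step.
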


However, for formulas containing the paraconsistent negation the latter result does not holds in general:

\begin{prop} There is  $u \in {\bf V}^{\tmA}$ and formulas $\varphi(x)$ and $\psi(x)$ such that the bounded quantification properties $UBQ_\psi^u$ and  $EBQ_\varphi^u$  fail in ${\bf V}^{\tmA}$.
\end{prop}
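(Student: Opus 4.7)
The plan is to reuse the three sets constructed in the proof of Theorem~\ref{fail-leibniz}, which already witness the failure of the Leibniz rule because they exploit the separation between membership (value $\termvalue{w \pert x}$) and non-membership (value $\termvalue{\neg(w \pert x)}$). Recall from that proof that, taking $w = \{\langle\varnothing,{\bf 1}\rangle\}$, $u = \{\langle w,{\bf \frac{1}{2}}\rangle\}$ and $v = \{\langle w,{\bf 1}\rangle\}$, one has $\termvalue{u \approx v} = {\bf \frac{1}{2}}$, $\termvalue{w \pert u} = {\bf \frac{1}{2}}$ and $\termvalue{w \pert v} = {\bf 1}$. Now I would set the witness set for the proposition to be $U := \{\langle v,{\bf 1}\rangle\}$, for which $\mathrm{dom}(U) = \{v\}$, $U(v)_1 = 1$, $\termvalue{v \pert U}_1 = 1$ and, crucially, $\termvalue{u \pert U}_1 = 1$ (since $U(v) \,\tilde{\wedge}\, \termvalue{v \approx u} = {\bf 1} \,\tilde{\wedge}\, {\bf \frac{1}{2}} = {\bf \frac{1}{2}}$). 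Thus $U$ has $u$ among its ``virtual members'' even though $u \notin \mathrm{dom}(U)$.

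For the failure of $EBQ_\varphi^U$ I would take $\varphi(x) := \neg(w \pert x)$. The right-hand side reduces to $U(v)_1 \wedge \termvalue{\varphi(v)}_1 = 1 \wedge 0 = 0$, using $\termvalue{\neg(w\pert v)} = \tilde{\neg}\,{\bf 1} = {\bf 0}$. The left-hand side, however, ranges over all $y \in {\bf V}^{\tmA}$ and the contribution at $y = u$ is $\termvalue{u \pert U}_1 \wedge \termvalue{\neg(w \pert u)}_1 = 1 \wedge 1 = 1$, since $\termvalue{\neg(w\pert u)} = \tilde{\neg}\,{\bf \frac{1}{2}} = {\bf \frac{1}{2}}$. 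Hence the left-hand side equals $1$ while the right-hand side equals $0$, so $EBQ_\varphi^U$ fails.

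For the failure of $UBQ_\psi^U$ I would use the formula $\psi(x) := \sneg\neg(w \pert x)$, which is designed to invert the split of values between $u$ and $v$. Indeed $\termvalue{\psi(v)} = \tilde{\sneg}\,{\bf 0} = {\bf 1}$ whereas $\termvalue{\psi(u)} = \tilde{\sneg}\,{\bf \frac{1}{2}} = {\bf 0}$. The right-hand side of $UBQ_\psi^U$ becomes $U(v)_1 \to \termvalue{\psi(v)}_1 = 1 \to 1 = 1$, while the left-hand side includes the factor at $y = u$, which is $\termvalue{u \pert U}_1 \to \termvalue{\psi(u)}_1 = 1 \to 0 = 0$. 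So the left-hand side is $0$ while the right-hand side is $1$, and $UBQ_\psi^U$ fails.

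The substantive content of the proof is conceptual rather than computational: the real obstacle was finding, for a single $u$, two formulas that witness failures in opposite directions. The EBQ-failure needs a ``non-domain'' $y$ with $\termvalue{y \pert U}_1 = 1$ whose $\varphi$-value is higher (in the first coordinate) than that of every actual element of $\mathrm{dom}(U)$; the UBQ-failure needs exactly the opposite configuration. The trick is that the equivalence class of $v$ modulo $\approx$ in ${\bf V}^{\tmA}$ contains both $v$ and $u$, yet the paraconsistent negation $\neg$ (and its combination with the strong negation $\sneg$) can sort them into different designated/non-designated patterns. Once the formulas are chosen via this observation, the calculations are straightforward applications of the definitions of $\tilde{\neg}$, $\tilde{\sneg}$ and of the interpretation clauses of Definition~\ref{defint-twist}.
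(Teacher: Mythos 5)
Your proof is correct and is essentially the paper's own argument: up to renaming, your $w$, $u$, $v$, $U$ are exactly the paper's $w$, $v$, $y$, $u$, you use the same formula $\varphi(x)=\neg(w\pert x)$ and the same $\psi=\sneg\varphi$, and the key point (the ``virtual member'' $u$ with $\termvalue{u\pert U}_1=1$ contributing $1$ to the unbounded supremum while every actual element of $dom(U)$ contributes $0$) is identical. The only cosmetic difference is that the paper obtains the $UBQ$ failure from the $EBQ$ failure by the $\sneg$-duality between suprema and infima, whereas you verify it by direct computation.
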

\begin{proof}
It is enough to prove the falure of  $EBQ_\varphi^u$ given that the failure of $UBQ_\psi^u$ is obtained from it by using $\psi(x):=\sneg\varphi(x)$ and the duality between infimum and supremum through the Boolean complement $\sneg$.

Thus, let ${\bf V}^{\tmA}$  and let $w=\{\langle\varnothing,{\bf 1}\rangle\}$, $v=\{\langle w,{\bf \frac{1}{2}}\rangle\}$,  $y=\{\langle w,{\bf 1}\rangle\}$ and $u=\{\langle y,{\bf 1}\rangle\}$.  Let $\varphi(x):=\neg(w \pert x)$.   As in the proof of Theorem~\ref{fail-leibniz} it can be proven that $\termvalue{v \approx y}=\termvalue{\varphi(v)}={\bf \frac{1}{2}}$ and $\termvalue{\varphi(y)} = {\bf 0}$.
Hence $\bigvee_{x \in dom(u)} ((u(x))_1 \wedge \termvalue{\varphi(x)}_1) =  (u(y))_1 \wedge \termvalue{\varphi(y)}_1 = 0$ while $\termvalue{\exists x \pert u \, \varphi(x)}_1 = \termvalue{\exists x (x \pert u \, \wedge \, \varphi(x))}_1 = \bigvee_{v' \in {\bf V}^{\tmA}} \bigvee_{x \in dom(u)} ((u(x))_1 \wedge \termvalue{v' \approx x}_1 \wedge  \termvalue{\varphi(v')}_1) =  \bigvee_{v' \in {\bf V}^{\tmA}}  ((u(y))_1 \wedge \termvalue{v' \approx y}_1 \wedge  \termvalue{\varphi(v')}_1) \geq (u(y))_1 \wedge \termvalue{v \approx y}_1 \wedge  \termvalue{\varphi(v)}_1 = 1$. This means that
$\termvalue{\exists x \pert u \, \varphi(x)}_1 = 1 \neq 0 =  \bigvee_{x \in dom(u)} ((u(x))_1 \wedge \termvalue{\varphi(x)}_1)$.
\end{proof}

\

\noindent It is worth noting that the limitations of \ZFlp\ pointed out above (namely, the Leibniz rule and the bounded quantification property for formulas containing the paraconsistent negation) are also present in  L\"owe-Tarafder's model~\cite{low:tar:15}.

As mentioned in Section~\ref{Intro-twist}, in~\cite{CC13} was presented a family of paraconsistent set theories based on diverse \lfis, such that the original \ZF\ axioms were slightly modified in order to deal with a unary predicate $C(x)$ representing that `the set $x$ is consistent'. The consistency connective $\cons$ is primitive in \mbc, but it is definable as $\cons \varphi:=\sneg(\varphi \land \neg\varphi)$ in any axiomatic extension of \mbc\ which proves the schema (ciw): $\cons\varphi \vee (\varphi \land \neg\varphi)$ such as \lptz. In the same way, the consistency predicate $C(x)$ can be expressed, in extensions of \ZFmbc, in terms of a formula of \ZFmbc\ without using the predicate $C$, and the same happens with the inconsistency predicate $\neg C(x)$. For instance,  \ZFmci\ is based on \mci, an extension of \mbc\ in which $\neg\cons\varphi$ is equivalent to $\varphi \land \neg\varphi$. Thus, $\neg C(x)$ was defined to be equivalent to $(x \approx x) \land \neg(x \approx x)$   in  \ZFmci. From this,  $\neg C(x)$ is equivalent to $\neg\cons(x \approx x)$   in  \ZFmci. Given that \lptz\ is an extension of \mci, if a consistency predicate for sets were added to the language of \ZFlp\ then it seems reasonable to require the equivalence between $\neg C(x)$ and $\neg\cons(x \approx x)$   in  \ZFlp. But $\cons C(x)$ is derivable \ZFmci, so it would be valid in  \ZFlp\ (indeed, the proof in \ZFmci\ of $\cons C(x)$ given in~\cite[Proposition~3.10]{CC13} holds in \qlptz, assuming the axioms for $C$ from \ZFmci). From this  $C(x) \sse \cons(x \approx x)$ would be also derivable in \qlptz\ and so it would be valid in \ZFlp\ expanded with a suitable predicate $C$ denoting `consistency for sets'. This motivates the following:  

\begin{defi} \label{conssets} Define in \ZFlp\ the {\em consistency  predicate for sets}, $C(x)$, as follows: $C(x)=_{def} \sneg\neg(x\approx x)$.
\end{defi}

\noindent According to the previous discussion,  $C(x)$ should be equivalent to $\cons(x\approx x)$ in \ZFlp. But $\cons\varphi$ is equivalent to $\sneg(\varphi \land \neg \varphi)$ in \lptz, and $(x\approx x)$ is valid in \ZFlp, hence
$C(x)$ should be equivalent to $\sneg\neg(x\approx x)$ in \ZFlp, which justifies  Definition~\ref{conssets}.

\begin{prop} The consistency predicate $C(x)$ is non-trivial: there exist $v,w \in {\bf V}^{\tmA}$ such that $\termvalue{C(v)}={\bf 1}$ and $\termvalue{C(w)}={\bf 0}$. Moreover,  $\termvalue{C(u)}\neq {\bf \frac{1}{2}}$ for every $u$ in ${\bf V}^{\tmA}$.
\end{prop}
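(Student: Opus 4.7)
The plan is to reduce everything to a direct calculation of $\termvalue{C(u)}$ as a twist-value in $\tA$, exploiting the structural constraints that Theorem~\ref{basicVB}(i) and Definition~\ref{defKlfi1} impose. Since $C(x)$ is defined as $\sneg\neg(x\approx x)$, the first step is to observe that
\[
\termvalue{C(u)}^{{\bf V}^{\tmA}}=\tilde{\sneg}\,\tilde{\neg}\,\termvalue{u\approx u}^{{\bf V}^{\tmA}}.
\]
By Theorem~\ref{basicVB}(i), the first coordinate of $\termvalue{u\approx u}$ is always $1$, so $\termvalue{u\approx u}=(1,b)$ for some $b \in |\mA|$. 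Applying clauses (v) and (iv) of Definition~\ref{defKlfi1} one computes
\[
\termvalue{C(u)}=\tilde{\sneg}\,\tilde{\neg}(1,b)=\tilde{\sneg}(b,1)=(\sneg b,\,b).
\]

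With this normal form for $\termvalue{C(u)}$, the three claims become bookkeeping. For the ``not $\tfrac{1}{2}$'' part, note that ${\bf \frac{1}{2}}=(1,1)$, so $\termvalue{C(u)}={\bf \frac{1}{2}}$ would force simultaneously $b=1$ and $\sneg b=1$, which is impossible in a Boolean algebra; hence $\termvalue{C(u)}\in\{{\bf 0},{\bf 1}\}$ for every $u\in{\bf V}^{\tmA}$. For the witness with value ${\bf 1}$ I take $v=\varnothing$, the empty function in ${\bf V}^{\tmA}$: both big conjunctions in the definition of $\termvalue{v\approx v}$ are empty and hence equal to the top element ${\bf 1}=(1,0)$ of \tmA\ (using Remark~\ref{TA-lat}), so $\termvalue{v\approx v}={\bf 1}$, giving $b=0$ and therefore $\termvalue{C(v)}=(\sneg 0,0)=(1,0)={\bf 1}$.

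For the witness with value ${\bf 0}$, I re-use the set produced in Proposition~\ref{u-incons}: fix any $y\in{\bf V}^{\tmA}$ and let $w=\{\langle y,{\bf \frac{1}{2}}\rangle\}$. That proposition already establishes $\termvalue{w\approx w}={\bf \frac{1}{2}}=(1,1)$, so now $b=1$ and consequently $\termvalue{C(w)}=(\sneg 1,1)=(0,1)={\bf 0}$.

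No genuine obstacle is expected: the argument is a pure unfolding once one has the identity $\termvalue{C(u)}=(\sneg b,b)$ with $b=\termvalue{u\approx u}_2$. The only subtle point worth double-checking is the treatment of empty conjunctions when interpreting $\termvalue{\varnothing\approx\varnothing}$, where one must use that the top of \tmA\ (the empty meet) is $(1,0)$, as recorded in Remark~\ref{TA-lat}; this is what ultimately pins $\termvalue{C(\varnothing)}$ to ${\bf 1}$ rather than to some larger element of $D_\mA$.
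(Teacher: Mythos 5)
Your proof is correct and follows essentially the same route as the paper's: both reduce $\termvalue{C(u)}$ to the normal form $(\sneg b,\, b)$ with $b=\termvalue{u\approx u}_2$ (the paper writes it as $(\sneg z_1,z_1)$ for $z=\termvalue{\neg(u\approx u)}$, which immediately rules out $(1,1)$), and both exhibit explicit witnesses — the paper takes $v=\{\langle\varnothing,{\bf 1}\rangle\}$ and $w=\{\langle\varnothing,{\bf \frac{1}{2}}\rangle\}$ where you take $\varnothing$ itself and $\{\langle y,{\bf \frac{1}{2}}\rangle\}$ via Proposition~\ref{u-incons}, a cosmetic difference. One caveat: your aside ``hence $\termvalue{C(u)}\in\{{\bf 0},{\bf 1}\}$ for every $u$'' does not follow, since $(\sneg b,b)$ need not equal ${\bf 0}$ or ${\bf 1}$ when $\mA$ has more than two elements (e.g.\ $u=\{\langle\varnothing,(1,b)\rangle\}$ yields $\termvalue{u\approx u}=(1,b)$ and so $\termvalue{C(u)}=(\sneg b,b)$ for an arbitrary $b\in|\mA|$); this over-claim is not part of the statement, and the conclusion actually required, $\termvalue{C(u)}\neq{\bf \frac{1}{2}}$, is correctly established.
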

\begin{proof}
Let ${\bf V}^{\tmA}$ be a twist-valued model for \ZFlp, and consider $v=\{\langle\varnothing,{\bf 1}\rangle\}$ and $w=\{\langle\varnothing,{\bf \frac{1}{2}}\rangle\}$ in ${\bf V}^{\tmA}$. It is easy to see that $\termvalue{C(v)}={\bf 1}$ and $\termvalue{C(w)}={\bf 0}$. On the other hand, for every $u$ in ${\bf V}^{\tmA}$ it is the case that  $\termvalue{C(u)}=\tilde{\sneg} z$ for $z = \termvalue{\neg(u \approx u)}$. Hence $\termvalue{C(u)} = (\sneg z_1,z_1)\neq {\bf \frac{1}{2}}$, for every $u$.
\end{proof}

Finally, we can show now that twist-valued models over \tmA\ (that is, over the logic \lptz) are ``more paraconsistent'' than the ones over \tmzA\ (that is, defined over  $(\pst,\neg)$). Indeed, as we have seen,  \ZFlp\  allow us to define in every twist-valued model  ${\bf V}^{\tmA}$  an ``inconsistent set'', namely  $u$, such that $(u \approx u) \land \neg(u \approx u)$ holds. In fact, any $u = \{\langle w, {\bf \frac{1}{2}}\rangle\}$ is such that $\termvalue{u \approx u}={\bf \frac{1}{2}} \tilde{\to}{\bf \frac{1}{2}}={\bf \frac{1}{2}}$.  The difference, of course, rests on the nature of the  implication operator considered in each case: in $(\pst,\neg)$ the value of   $(u \approx u)$ is always {\bf 1}, since ${\bf \frac{1}{2}} \tilde{\Rightarrow}{\bf \frac{1}{2}}={\bf 1}$. Hence,   $\neg(u \approx u)$ always gets the value {\bf 0}. The same holds  in any model over reasonable implicative algebras considered by L\"owe and Tarafder (see~\cite[Proposition~1]{low:tar:15}).

\subsection{Discussion: \ZFlp\ and the failure of the Leibniz rule}

At first sigth, having a (paraconsistent) set theory as \ZFlp\ in which the Leibniz rule is not  satisfied for every formula $\varphi(x)$ that represents a property could seem to be a bit disappointing. After all, \ZF\ is defined as a first-order theory with equality, which pressuposes the validity of the Leibniz rule.  

The Leibniz rule states that the equality predicate preserves logical equivalence, namely: $(a \approx b) \to (\varphi(a) \sse \varphi(b)$ for every formula $\varphi(x)$ (clearly this can be generalized to formulas with $n\geq 1$ free variables, assuming $\bigwedge_{i=1}^n(a_i \approx b_i)$). In first-order theories based on classical logic, such as \ZF, it is enough to require that this property holds for every atomic formula, and so the general case is proven by induction on the complexity of $\varphi$. Of course this proof cannot be reproduced in \qlptz, since $\neg$ is not congruential: $\varphi(a) \sse \varphi(b)$ does not imply $\neg\varphi(a) \sse \neg\varphi(b)$ in general (and this is the key step in the proof by induction). The solution is requiring the validity of the Leibniz rule for every $\varphi$ from the beginning, adjusting accordingly the class of interpretations for  \qlptz\ expanded with equality  (see~\cite{CFG19}). However, the situation for  \ZFlp\ is quite different: because of the extensionality axiom, the definition of the interpretation of the equality predicate depends strongly on the interpretation of the membership predicate. In fact, the interpretation of both predicates is simultaneously defined by transfinite recursion, according to Definition~\ref{defint-twist}.

The validity of the Leibniz rule, in the case of Boolean-set models for \ZFC, is  proven as a theorem.  The simultaneous definition of the equality and membership predicates is designed to fit exactly the requirements of the extensionality axiom: two individuals (sets) are identical provided that they have the same elements. From this, it is proven by induction of the complexity of $\varphi(x)$ that $\termvalue{u \approx v} \land \termvalue{\varphi(u)} \leq \termvalue{\varphi(v)}$ in every Boolean-valued model. As we have seen in Theorem~\ref{basicVB}(vii), the same holds in twist-valued models w.r.t. the first coordinate, namely: $\termvalue{u \approx v}_1 \land \termvalue{\varphi(u)}_1 \leq \termvalue{\varphi(v)}_1$. But then, it is required that this property  just holds for `classical'  formulas, that is, formulas $\varphi$ without occurrences of the paraconsistent negation $\neg$. The explanation for this fact is simple, from the technical point of view: assuming that the property above holds for $\varphi$ then, when considering $\neg\varphi$, the value of $\termvalue{\neg\varphi(u)}_1$ is $\termvalue{\varphi(u)}_2$, and we don't have enough information about the relationship between $\termvalue{\varphi(u)}_2$, and $\termvalue{\varphi(v)}_2$. The example given in the proof of  Theorem~\ref{fail-leibniz} shows that it is impossible to satisfy the Leibniz rule in   \ZFlp\ for formulas containing the paraconsistent negation, hence this is an unsolvable problem with the current definitions.


Within the present approach, paraconsistent situations such as the existence of `inconsistent'  sets $u$ satisfying $\neg(u \approx u)$ or the existence of a set being simultaneously an element and a non-element of another set seems to be  irreconcilable with the fullfillment of the Leibniz rule for formulas behind the `classical' language. Because of this, the predicate $\approx$ in \ZFlp\ 
should be considered as representing `indiscernibility by pure \ZF-properties', exactly as happens with Boolean-valued models for \ZF. In this manner $(u  \approx v)$ implies that, besides having the same elements, $u$ and $v$ have,  for instance, the same `non$^*$-elements', where `non$^*$' stands for the classical negation $\sneg$. That is, $\forall w(\sneg(w \pert u) \sse \sneg (w \pert v))$ is a consequence of $(u  \approx v)$. On the other hand, as it was shown in Theorem~\ref{fail-leibniz}, $(u  \approx v)$ {\em does not imply} (in general) that $u$ and $v$ have the same `non-elements', where `non' stands for the paraconsistent negation $\neg$:  $\forall w(\neg(w \pert u) \sse \neg (w \pert v))$ is not a consequence of $(u  \approx v)$.

Instead of being regarded as discouraging,  the fact that  $(u  \approx v)$  does not  necessarily  imply   that $u$ and $v$ have the same `non-elements' (for `non'  the paraconsistent negation $\neg$) can be seen as an auspicious   property, because it can be a  way to circumvent undesirable consequences of  `non-elements', as it happens with the well-known    Hempel's Ravens Paradox:   evidence, differently  from proof, for instance, has its  own   idiosyncratic properties. This  point, however, will be left for further discussion.

\section{Concluding remarks}

In this paper, we introduce a generalization of Boolean-valued models of set theory to a class of algebras represented as twist-structures, defining a class of models for \ZFC\ that we called twist-valued models.  This class of algebras characterizes a three-valued paraconsistent logic called \lpt, which was extensively studied in the literature of paraconsistent logics under different names and signatures as, for example, as the well-known  da Costa and D'Ottaviano's logic \dacdot\ and as  the logic  \lfium\ (cf. \cite{car:mar:amo:2000}) . As it was shown by Blok and Pigozzi in~\cite{blok-pig}, the class of algebraic models of \dacdot\  (hence, the class of twist structures for \lptz) coincides with the agebraic models of \L ukasiewicz three-valued logic $\L_3$.

With small changes, in Section~\ref{genPS3} the twist-valued models for \lptz\ were adapted in order to obtain twist-valued for  $(\pst,\neg)$, the three-valued paraconsistent logic studied  by L\"owe and Tarafder in~\cite{low:tar:15} as a basis for    paraconsistent set theory. Thus, their three-valued algebraic model of \ZF\ was extended to a class of twist-valued models of \ZF, each of them defined over a complete Boolean algebra. In addition, it was proved that these models (including the  three-valued model over   $(\pst,\neg)$)  satisfy, in addition, the Axiom of Choice.
Moreover, it was shown that the implication operator $\to$ of \lptz\ is, in a sense, more suitable for a paraconsistent set theory than the one $\Ra$ of \pst: it allows inconsistent sets (i.e., $\termvalue{(w \approx w)}={\bf \frac{1}{2}}$ for some $w$, see   Proposition~\ref{u-incons}). It is worth noting that $\to$ {\em does not} characterize a `reasonable implication algebra' (recall Definition~\ref{reasimp}): indeed, ${\bf 1} \wedge {\bf \frac{1}{2}} \leq {\bf \frac{1}{2}}$ but ${\bf 1} \not\leq {\bf \frac{1}{2}} \to  {\bf \frac{1}{2}} = {\bf \frac{1}{2}}$. This shows that reasonable implication algebras are just one way to define a paraconsistent set theory, not the best.
 
Despite having the same limitative results than  L\"owe-Tarafder's model (that is, the debatable failure of Leibniz rule and the bounded quantification property for formulas containing the paraconsistent negation, recall Section~\ref{sect-ZFparacon}) we believe  that \ZFlp\ has a great potential as a paraconsistent set theory. 
In particular, the formal properties and the axiomatization of \ZFlp\ deserve to be further investigated, especially towards the problem of the validity of independence results  in paraconsistent set theory.

\bibliographystyle{abbrv}

\end{document}